\newtheorem*{theorem*}{Theorem}
\newtheorem{theorem}{Theorem}[section]
\newtheorem{proposition}{Proposition}[section]
\newtheorem{lemma}{Lemma}[section]
\newtheorem{corollary}{Corollary}[section]
\newtheorem{notation}{Notation}[section]
\newtheorem{theoremm}{Theorem}
\newtheorem{lemmaa}{Lemma}
\newtheorem{conjecture}{Conjecture}
\theoremstyle{remark}
\newtheorem{remark}{Remark}[section]
\newcommand{\R}{\mathbb{R}}
\newcommand{\N}{\mathbb{N}}
\newcommand{\Z}{\mathbb{Z}}
\newcommand{\Q}{\mathbb{Q}}
\newcommand{\C}{\mathbb{C}}
\newcommand{\Bc}{\mathcal{B}}
\newcommand{\Cc}{\mathcal{C}}
\newcommand{\Fc}{\mathcal{F}}
\newcommand{\Mc}{\mathcal{M}}
\newcommand{\Nc}{\mathcal{N}}
\newcommand{\Expect}[1]{\mathbb{E} \left[ #1 \right] }
\newcommand{\EXPECT}[2]{\mathbb{E}_{#1} \left[ #2 \right] }
\newcommand{\prob}{\mathbb{P}}
\newcommand{\Prob}[1]{\mathbb{P} \left( #1 \right) }
\newcommand{\PROB}[2]{\mathbb{P}_{#1} \left( #2 \right) }
\renewcommand{\P}{\mathbb{P}}
\newcommand{\E}{\mathbb{E}}
\newcommand{\Pb}{\mathds{P}}
\newcommand{\Eb}{\mathds{E}}
\newcommand{\abs}[1]{\left\vert #1 \right\vert}
\newcommand{\floor}[1]{\left\lfloor #1 \right\rfloor}
\newcommand{\indic}[1]{ \mathbf{1}_{ \left\{ #1 \right\} } }
\newcommand{\eps}{\varepsilon}
\DeclareMathOperator{\CR}{CR}
\renewcommand{\d}{\mathrm{d}}
\newcommand*\bigcdot{\mathpalette\bigcdot@{.5}}
\newcommand*\bigcdot@[2]{\mathbin{\vcenter{\hbox{\scalebox{#2}{$\m@th#1\bullet$}}}}}
\newlength{\dhatheight}
\newcommand{\dhat}[1]{%
    \settoheight{\dhatheight}{\ensuremath{\hat{#1}}}%
    \addtolength{\dhatheight}{-0.35ex}%
    \hat{\vphantom{\rule{1pt}{\dhatheight}}%
    \smash{\hat{#1}}}}
\title{Critical Brownian multiplicative chaos}
\author{Antoine Jego\thanks{On leave from the University of Cambridge, partly supported by the EPSRC grant
EP/L016516/1 for the University of Cambridge Centre for Doctoral Training, the Cambridge Centre for Analysis. E-mail address: \href{mailto:apfj2@cam.ac.uk}{apfj2@cam.ac.uk}}
}
\affil{University of Vienna}
\date {}
\numberwithin{equation}{section}
\begin{document}

\renewcommand{\theparagraph}{\thesubsection.\arabic{paragraph}} 

\maketitle

\begin{abstract}
Brownian multiplicative chaos measures, introduced in \cite{jegoGMC,AidekonHuShi2018,bass1994}, are random Borel measures that can be formally defined by exponentiating $\gamma$ times the square root of the local times of planar Brownian motion. So far, only the subcritical measures where the parameter $\gamma$ is less than 2 were studied. This article considers the critical case where $\gamma =2$, using three different approximation procedures which all lead to the same universal measure. On the one hand, we exponentiate the square root of the local times of small circles and show convergence in the Seneta--Heyde normalisation as well as in the derivative martingale normalisation. On the other hand, we construct the critical measure as a limit of subcritical measures. This is the first example of a non-Gaussian critical multiplicative chaos.

We are inspired by methods coming from critical Gaussian multiplicative chaos, but there are essential differences, the main one being the lack of Gaussianity which prevents the use of Kahane's inequality and hence a priori controls. Instead, a continuity lemma is proved which makes it possible to use tools from stochastic calculus as an effective substitute.
\end{abstract}

\tableofcontents

\section{Introduction}

Thick points of planar Brownian motion/random walk are points that have been visited unusually often by the trajectory. The study of these points has a long history going back to the famous conjecture of Erd\H{o}s and Taylor \cite{erdos_taylor1960} on the leading order of the number of times a planar simple random walk visits the most visited site during the first $n$ steps. Since then, the understanding of these thick points has considerably improved. On the random walk side, \cite{dembo2001} settled Erd\H{o}s--Taylor conjecture and computed the number of thick points at the level of exponent, for random walk having symmetric increments with finite moments of all order. \cite{rosen2006,BassRosen2007}, and more recently \cite{jego2020}, streamlined the proof and extended these results to a wide class of planar random walk. On the Brownian motion side, \cite{bass1994} constructed random measures supported on the set of thick points. Their results concern only a partial range $\{a \in (0,1/2)\}$ of the thickness parameter $a$\footnote{$a$ is related to the parameter $\gamma$ in Gaussian multiplicative chaos theory by $a=\gamma^2/2$, so $a < 1/2$ corresponds to $\gamma < 1$.}. \cite{AidekonHuShi2018} and \cite{jegoGMC} extended simultaneously the results of \cite{bass1994} by building these random measures for the whole subcritical range $\{a \in (0,2)\}$. \cite{jegoRW} gave an axiomatic characterisation of these measures and showed that they describe the scaling limit of thick points of planar simple random walk for any fixed $a < 2$. All these aforementioned works are subcritical results. The aim of this paper is to extend the theory to the critical point $a=2$ by constructing a random measure supported by the thickest points of a planar Brownian trajectory. This enables us to formulate a precise conjecture on the convergence in distribution of the supremum of local times of planar random walk.

Our construction is inspired by Gaussian multiplicative chaos theory (GMC), i.e. the study of random measures formally defined as the exponential of $\gamma$ times a log-correlated Gaussian field, such as the two-dimensional Gaussian free field (GFF), where $\gamma \geq 0$ is a parameter. Since such a field is not defined pointwise but is rather a random generalised function, making sense of such a measure requires some nontrivial work. The theory was introduced by Kahane \cite{kahane} and has expanded significantly in recent years. By now it is relatively well understood, at least in the subcritical case where $\gamma <\sqrt{2d}$ \cite{RobertVargas2010, DuplantierSheffieldGMC, RhodesVargasGMC, ShamovGMC, berestycki2017} and even in the critical case $\gamma = \sqrt{2d}$ \cite{DRSV2014b, DRSV2014a, JunnilaSaksman2017, JSW19, powell2018, aru19, aru17}.
In this article, the log-correlated field we have in mind is the (square root of) the local time process of a planar Brownian motion, appropriately stopped. The main interest of our construction from GMC point of view is that this field is non-Gaussian, so that our results give the first example of a critical chaos for a truly non-Gaussian field.\footnote{We point out the work of \cite{SaksmanWebb2016} on the Riemann zeta function where the limiting field is Gaussian, but not the approximation. See also \cite{FyodorovKeating2014, Webb2015, NikulaSaksmanWebb2018, Lambert2018, BerestyckiWebWong2018, Junnila18} for subcritical results.}

\subsection{Main results}

Let $\prob_x$ be the law under which $(B_t)_{t \geq 0}$ is a planar Brownian motion starting from $x \in \R^2$. Let $D \subset \R^2$ be an open bounded simply connected domain, $x_0 \in D$ be a starting point and $\tau$ be the first exit time of $D$:
\[
\tau := \inf \{ t \geq 0: B_t \notin D \}.
\]
For all $x \in \R^2, t>0, \eps >0,$ define the local time $L_{x,\eps}(t)$ of $\left( \abs{B_s - x}, s \geq 0 \right)$ at $\eps$ up to time $t$ (here $\abs{\cdot}$ stands for the Euclidean norm):
\begin{equation}
\label{eq:def_local_time_BM}
L_{x,\eps}(t) := \lim_{r \rightarrow 0^+} \frac{1}{2 r} \int_0^t \indic{ \eps - r \leq \abs{B_s - x} \leq \eps + r} ds.
\end{equation}
\cite[Proposition 1.1]{jegoGMC} shows that we can make sense of the local times $L_{x,\eps}(\tau)$ simultaneously for all $x$ and $\eps$ with the convention that $L_{x,\eps}(\tau) = 0$ if the circle $\partial D(x,\eps)$ is not entirely included in $D$. We can thus define for any thickness parameter $\gamma \in (0,2]$ and any Borel set $A$,
\begin{equation}
\label{eq:def_measure_subcritical_normalisation}
m_\eps^\gamma(A) := \sqrt{|\log \eps|} \eps^{\gamma^2/2} \int_A e^{\gamma \sqrt{\frac{1}{\eps} L_{x,\eps}(\tau)} } dx.
\end{equation}
We recall:

\begin{theoremm}[Theorem 1.1 of \cite{jegoGMC}]\label{thmm:subcritical_measures}
Let $\gamma \in (0,2)$. The sequence of random measures $m_\eps^\gamma$ converges as $\eps \to 0$ in probability for the topology of weak convergence on $D$ towards a Borel measure $m^\gamma$ called Brownian multiplicative chaos.
\end{theoremm}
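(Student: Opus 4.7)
The plan is to establish $L^1$ (hence in-probability) convergence of $m_\eps^\gamma$ via a truncated second-moment method, using excursion theory as a substitute for the Gaussian tools of classical GMC. The starting point is that $h_\eps(x):=\sqrt{L_{x,\eps}(\tau)/\eps}$ behaves as a log-correlated non-Gaussian field: by excursion decomposition around $\partial D(x,\eps)$, conditionally on the Brownian motion hitting this circle, $L_{x,\eps}(\tau)/\eps$ is distributed as $2\sum_{i=1}^N e_i$, where $(e_i)$ are i.i.d.\ standard exponentials and $N$ is geometric with parameter of order $1/\log(1/\eps)$. The effective correlation between $h_\eps(x)$ and $h_\eps(y)$ is then of order $\log(1/\max(|x-y|,\eps))$, exactly as for a log-correlated field.

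First I would compute the one-point function $\E[m_\eps^\gamma(A)]$ from the Laplace transform of the above compound sum. A saddle-point evaluation produces a non-Gaussian prefactor of order $1/\sqrt{\log(1/\eps)}$, coming from the local fluctuations of $\sqrt{T}$ around its typical value, which is precisely cancelled by the factor $\sqrt{|\log\eps|}$ in the definition of $m_\eps^\gamma$, leaving a finite limit of the form $C_\gamma\int_A G_D(x_0,x)\,dx$ with $C_\gamma>0$ explicit. This fixes the normalisation and identifies the candidate limit $m^\gamma$.

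For the convergence itself, $m_\eps^\gamma$ is not uniformly bounded in $L^2$ throughout the subcritical range (only up to $\gamma<\sqrt{2}$). I would therefore introduce a \emph{good-point} event $G_\eps(x)$ requiring $\sqrt{L_{x,r}(\tau)/r}\le \gamma\log(1/r)+K$ for every dyadic $r\in[\eps,r_0]$, and let $\tilde m_\eps^\gamma$ denote $m_\eps^\gamma$ restricted to $G_\eps(x)$. The second moment of $\tilde m_\eps^\gamma$ is then tractable: the joint excursion structure around $x$ and $y$ decouples at the scale $|x-y|$, and the good-event cap rules out large exceedances on the coupled scales, reducing the cross-moment up to lower-order terms to a constant times $|x-y|^{-\gamma^2}$. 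This is integrable on $D\times D$ precisely when $\gamma<2$, so a standard $L^2$ argument yields convergence of $\tilde m_\eps^\gamma$.

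It then remains to show that $\E[m_\eps^\gamma(A)-\tilde m_\eps^\gamma(A)]\to 0$, i.e.\ that truncating to good points does not lose expected mass in the limit. This reduces to a random-walk-type fluctuation estimate: under the size-biased probability weighted by $e^{\gamma\sqrt{L_{x,\eps}(\tau)/\eps}}$, a change-of-measure identity coming from excursion theory shows that the dyadic process $r\mapsto\sqrt{L_{x,r}(\tau)/r}$ behaves as a random walk with drift $\gamma$ in the variable $\log(1/r)$, and standard ballot-type estimates bound the probability of exceeding this drift by more than $K$ at any scale. The main obstacle, as the abstract stresses, is precisely this step: there is no Kahane inequality allowing one to reduce the comparison to a Gaussian reference field, so both the size-biasing and the tail bounds must be performed by hand through explicit excursion decompositions. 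I expect the most delicate part to be calibrating the good event so that the second-moment bound remains tight uniformly in $\gamma<2$ while simultaneously not losing expected mass — the very balance that a Gaussian argument would hand one for free through Cameron--Martin.
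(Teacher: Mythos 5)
Your overall strategy --- truncate by a good event, bound the truncated second moment using the excursion/Bessel structure as a non-Gaussian substitute for Cameron--Martin, and then show the truncation does not lose $L^1$ mass via a ballot-type estimate --- is indeed the strategy of \cite{jegoGMC}: the Bessel identity \eqref{eq:local_times_and_Bessel_process} plays the role of your excursion decomposition, and your ``random walk with drift $\gamma$'' picture under size-biasing is exactly what Lemma~\ref{lem:Bessel0_Bessel3} encodes. Your description of where the $\sqrt{|\log\eps|}$ normalisation comes from is also the right heuristic, though note that half of the $1/\sqrt{|\log\eps|}$ to be cancelled comes from the hitting probability $\PROB{x_0}{\tau_{x,\eps}<\tau}\sim G_D(x_0,x)/|\log\eps|$ rather than from the saddle-point fluctuations of $\sqrt{T}$ alone.

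The key quantitative claim in your $L^2$ step is, however, wrong. You assert the truncated cross-moment is a constant times $|x-y|^{-\gamma^2}$ and that this is integrable on $D\times D$ precisely when $\gamma<2$. Over a planar domain, $\int_D |x-y|^{-\gamma^2}\,dy<\infty$ only for $\gamma^2<2$, i.e.\ $\gamma<\sqrt 2$; with that exponent your argument only reproduces the $L^2$ range already covered by \cite{bass1994}. The whole point of the good event is that it \emph{improves} the exponent: on the coupled scales between $1$ and $|x-y|$ the size-biased process carries drift $2\gamma$ while the barrier has slope only $\gamma$, so the barrier enforces an exponential cost of order $|x-y|^{\gamma^2/2}$, and the truncated two-point function is of order $|x-y|^{-\gamma^2/2}$ (up to polylogarithms), which is what is integrable precisely for $\gamma<2$. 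One can see this at the critical point in \eqref{eq:proof_prop_bdd_L2}, where the first layer alone gives $|x-y|^{-2}$, matching $\gamma^2/2$ at $\gamma=2$. Two further calibration points: (i) the barrier slope must be some $\bar\gamma$ slightly above $\gamma$ (or $K$ must grow with the scale), since under the drift-$\gamma$ size-biased law a barrier of slope exactly $\gamma$ is crossed with probability $1-O(K/\sqrt{|\log\eps|})$ and the truncation would kill the first moment; and (ii) the decoupling of the excursion structures around $x$ and $y$ at scale $|x-y|$ is not exact once one conditions on excursion endpoints, which is precisely where a quantitative continuity estimate such as Lemma~\ref{lem:independence local times and exit point} is needed and constitutes the genuinely non-Gaussian ingredient that replaces Kahane's inequality.
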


See \cite{AidekonHuShi2018} for a different construction of the subcritical Brownian multiplicative chaos, as well as \cite{bass1994} for partial results. See also \cite{jegoRW} for more properties on these measures.

Our first result towards extending the theory to the critical point $\gamma =2$ is the fact that the subcritical normalisation yields a vanishing measure in the critical case:

\begin{proposition}\label{prop:subcritical_vanishes}
$m_\eps^{\gamma =2} (D)$ converges in $\P_{x_0}$-probability to zero.
\end{proposition}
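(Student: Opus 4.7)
A direct Markov-inequality argument will not work. Using the explicit Laplace transform of $L_{x,\eps}(\tau)/\eps$ (which, conditional on the circle $\partial D(x,\eps)$ being hit, is essentially a geometric sum of independent exponentials of rate of order $1/\log(1/\eps)$), a saddle-point computation shows that $\E_{x_0}[m_\eps^{\gamma=2}(D)]$ remains bounded away from both $0$ and $\infty$ as $\eps \to 0$. So the statement is a genuine concentration phenomenon: $m_\eps^{\gamma=2}(D)$ is close to $0$ on an event of probability tending to $1$, with rare $O(1)$ spikes accounting for the positive mean. This mirrors Theorem~7 of \cite{DRSV2014a} in the Gaussian critical chaos.

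The cleanest plan is to derive the proposition from (or in parallel with) the Seneta--Heyde convergence announced in the paper's abstract. If one can exhibit a diverging sequence $\Phi(\eps) \to \infty$ (presumably of the form $|\log\eps|^{a}$ for some $a>0$) such that $\Phi(\eps)\, m_\eps^{\gamma=2}(D)$ converges in probability to an almost surely finite random variable, then
\[
m_\eps^{\gamma=2}(D) = \Phi(\eps)^{-1} \cdot \bigl[\Phi(\eps)\, m_\eps^{\gamma=2}(D)\bigr] \longrightarrow 0
\]
in probability, which is the claim. Alternatively, following the self-contained strategy of \cite{DRSV2014a}, one can decompose $m_\eps^{\gamma = 2}(D)$ into a principal contribution proportional to a truncated derivative martingale
\[
D'_\eps(D) := \eps^2 \int_D (2|\log\eps| - \sqrt{L_{x,\eps}(\tau)/\eps})^+ \, e^{2\sqrt{L_{x,\eps}(\tau)/\eps}} \, dx,
\]
which converges to an almost surely finite limit, plus remainder terms coming from the positivity truncation and from the rare event $\{\sqrt{L_{x,\eps}(\tau)/\eps} > 2|\log\eps|\}$, both handled by first-moment estimates on near-critical thick points. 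The derivative-martingale piece contributes only $O(1/\sqrt{|\log\eps|})$ to $m_\eps^{\gamma=2}(D)$, and the remainder vanishes in probability.

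The main obstacle, and the reason this is the first critical non-Gaussian multiplicative chaos, is the absence of Kahane's convexity inequality for the process $\sqrt{L_{x,\eps}(\tau)/\eps}$. The convergence of $D'_\eps$, together with the tail and positivity-truncation estimates needed to close the decomposition above, must be established via the stochastic-calculus continuity lemma advertised in the abstract, which plays the role of Kahane's comparison theorem in this non-Gaussian setting. A secondary technical difficulty is that standard second-moment methods are barely applicable near criticality (the $L^q$ norms of $m_\eps^\gamma$ explode as $\gamma \to 2$ and $q \to 1$), so even the first-moment bounds on the remainder terms have to be sharpened by an appropriate exponential change of measure (sizing by $e^{\gamma\sqrt{L_{x,\eps}(\tau)/\eps}}$ and relating the tilted law to a conditioned Brownian excursion).
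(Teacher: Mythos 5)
Your first proposed route --- deducing the statement from the Seneta--Heyde convergence --- is exactly the paper's proof: since $m_\eps(D) = \sqrt{|\log\eps|}\,m_\eps^{\gamma=2}(D)$ converges in $\P_{x_0}$-probability to a finite nondegenerate limit by Theorem~\ref{th:convergence_measures}, the claim follows immediately with $\Phi(\eps) = \sqrt{|\log\eps|}$. Your observation that the first moment of $m_\eps^{\gamma=2}(D)$ stays $O(1)$ (so a bare Markov-inequality argument fails) is correct, as is the analogy with the Gaussian critical case.
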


To obtain a non-trivial object we thus need to renormalise the measure slightly differently. 
Firstly, we consider the Seneta--Heyde normalisation:
for all Borel set $A$, define
\begin{equation}
\label{eq:def_measure_SH}
m_\eps(A) := \sqrt{|\log \eps|} m_\eps^{\gamma=2}(A)
= |\log \eps| \eps^2 \int_A e^{2 \sqrt{ \frac{1}{\eps} L_{x,\eps}(\tau) }} dx.
\end{equation}
Secondly, we consider the derivative martingale normalisation which formally corresponds to (minus) the derivative of $m_\eps^\gamma$ with respect to $\gamma$ evaluated at $\gamma = 2$: for all Borel set $A$, define
\begin{equation}
\label{eq:def_measure_derivative}
\mu_\eps(A) := - \frac{\d m_\eps^\gamma(A)}{\d \gamma} \Bigr|_{\gamma=2} = \sqrt{|\log \eps|} \eps^2 \int_A \left( - \sqrt{\frac{1}{\eps} L_{x,\eps}(\tau)} + 2 \log \frac{1}{\eps} \right) e^{2 \sqrt{\frac{1}{\eps} L_{x,\eps}(\tau)} } dx.
\end{equation}

\begin{theorem}\label{th:convergence_measures}
The sequences of random positive measures $(m_\eps)_{\eps > 0}$ and random signed measures $(\mu_\eps)_{\eps > 0}$ converge in $\P_{x_0}$-probability for the topology of weak convergence towards random Borel measures $m$ and $\mu$. Moreover, the limiting measures satisfy:
\begin{enumerate}
\item
\label{thmthm:SH=derivative}
$m = \sqrt{\frac{2}{\pi}} \mu$ $\P_{x_0}$-a.s. In particular, $\mu$ is a random positive measure.
\item
\label{thmthm:nondegeneracy}
Nondegeneracy:
$\mu(D) \in (0,\infty)$ $\P_{x_0}$-a.s.
\item
\label{thmthm:first_moment}
First moment: $\EXPECT{x_0}{\mu(D)} = \infty$.
\item
\label{thmthm:nonatomicity}
Nonatomicity:
$\P_{x_0}$-a.s. simultaneously for all $x \in D$, $\mu(\{x\}) = 0$.
\end{enumerate}
\end{theorem}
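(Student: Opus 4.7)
The plan is to adapt the barrier-truncation strategy used for critical Gaussian multiplicative chaos and critical branching random walks, employing the ``continuity lemma'' mentioned in the abstract (together with stochastic calculus) as a non-Gaussian substitute for Kahane's convexity inequality. Fix a small $\eps_0 > 0$ and for each $\beta > 0$, $\eps \in (0,\eps_0)$ and $x \in D$ introduce the barrier event
\[
G_\eps^\beta(x) := \Bigl\{ \sqrt{\tfrac{1}{\delta} L_{x,\delta}(\tau)} \leq 2\log \tfrac{1}{\delta} + \beta \text{ for every } \delta \in [\eps, \eps_0] \Bigr\},
\]
and define truncated measures $\mu_\eps^\beta$ and $m_\eps^\beta$ by inserting $\mathbf{1}_{G_\eps^\beta(x)}$ inside the defining integrals of $\mu_\eps$ and $m_\eps$. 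On $G_\eps^\beta(x)$ the signed weight of $\mu_\eps$ is, up to the additive $\beta$, nonnegative, so the truncated measures are positive and amenable to $L^2$ methods.

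Convergence of the truncated measures is then tackled by a second moment computation. By the Markov property, a Ray--Knight-type description relates the radial local-time process $(\sqrt{L_{x,\delta}(\tau)/\delta})_{\delta}$, parameterised by $\log(1/\delta)$, to a standard Brownian motion up to boundary corrections; and the two-point function $\E_{x_0}[\mu_\eps^\beta(A)\mu_\eps^\beta(A')]$ decouples once the planar Brownian motion has visited both disks $D(x,\eps_0), D(y,\eps_0)$. The barrier $G_\eps^\beta$ keeps both the one-point and two-point integrals finite uniformly in $\eps$, and a Cauchy-in-$L^2$ argument produces limits $\mu^\beta$ and $m^\beta$. The identification $m^\beta = \sqrt{2/\pi}\,\mu^\beta$ is then obtained pointwise: writing $Y_\eps := 2\log(1/\eps) - \sqrt{L_{x,\eps}(\tau)/\eps}$, the integrand of $\mu_\eps^\beta$ differs from that of $m_\eps^\beta/\sqrt{|\log \eps|}$ by the factor $Y_\eps + \beta$, and on $G_\eps^\beta(x)$ the underlying process is conditioned to stay above the barrier on $[0, \log(1/\eps)]$, so that the entrance density of a Bessel-$3$ bridge produces exactly the constant $\sqrt{2/\pi}$ after integration in $Y_\eps$.

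The main obstacle is removing the truncation, namely showing $\mu_\eps - \mu_\eps^\beta \to 0$ and $m_\eps - m_\eps^\beta \to 0$ in probability, uniformly in $\eps$, as $\beta \to \infty$. With Kahane's inequality unavailable, I would instead combine the continuity lemma with a Girsanov-type change of measure on the radial process (in the spirit of the first moment computation of \cite{jegoGMC}) to bound the probability that a point of typical thickness $2\log(1/\eps)$ crosses the barrier at some intermediate scale; this is an entropic-repulsion estimate decaying in $\beta$. Granted this, the truncated limits extend to $\mu_\eps \to \mu$ and $m_\eps \to m$, and the identity $m = \sqrt{2/\pi}\,\mu$ persists. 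Nondegeneracy $\mu(D) \in (0,\infty)$ a.s.\ follows by a Paley--Zygmund argument applied to each $\mu^\beta$ combined with a $0$--$1$ law (either Kolmogorov via an exhaustion of $D$ by compacts, or a germ-type argument on the Brownian trajectory near the thick points). The first moment identity is immediate: a direct computation shows $\E_{x_0}[\mu_\eps(D)]$ diverges like $\sqrt{|\log \eps|}$ times the subcritical one-point expectation, and Fatou gives $\E_{x_0}[\mu(D)] = \infty$. Finally, nonatomicity is obtained from polynomial moment bounds on $\mu(B(x,r))$, uniform in $x$ and derived from the same two-point estimate restricted to a small disk, via a Borel--Cantelli argument over dyadic scales.
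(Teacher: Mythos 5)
Your outline captures the high-level barrier-truncation strategy the paper uses, but it has a genuine gap at the point where you claim that the single truncation on the event $G_\eps^\beta(x)$ (barrier $2\log(1/\delta)+\beta$) ``keeps both the one-point and two-point integrals finite uniformly in $\eps$.'' The one-point (i.e.\ $L^1$) bound is indeed obtained from this truncation, but the two-point ($L^2$) integral is \emph{not} finite with this barrier alone: on $G_\eps^\beta(x)$ the radial process is merely conditioned to stay below the line, and it can spend a lot of time arbitrarily close to it, which makes the two-point integral over $|x-y|$ small diverge. The paper handles this by introducing a \emph{second} layer of good events $G_\eps'(x)$ forcing an extra repulsion of order $\sqrt{s}/(M\log(2+s)^2)$ below the barrier (Lemma \ref{lem:bad_points}), shows via entropic repulsion that this second truncation costs $o_M(1)$ in $L^1$ (using Lemma \ref{lem:Bessel3_barrier}, point \ref{lemlem:barrier}), and only \emph{then} proves the $L^2$ bound and the Cauchy-in-$L^2$ property (Propositions \ref{prop:bdd_L2} and \ref{prop:Cauchy_L2}). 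Without this extra repulsion your second-moment argument would fail.

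A second, related issue is that you define $G_\eps^\beta(x)$ using the raw local times $\sqrt{L_{x,\delta}(\tau)/\delta}$ for a continuum of radii $\delta \in [\eps,\eps_0]$. The paper's continuity lemma (Lemma \ref{lem:independence local times and exit point}) — which is exactly the tool you invoke to decouple the two-point estimate once the trajectory has entered both small disks — is valid only for local times at \emph{dyadic} radii $\delta \in \{e^{-n}\}$, because it conditions on the initial and final positions of excursions and needs the trajectory to have enough room to mix the angular part (see Remark \ref{rem:dyadic_radii}). To reconcile the dyadic-radii constraint with the need for continuous-time Bessel estimates, the paper introduces the auxiliary field $(h_{x,\delta})$ interpolating the local times between dyadic scales by independent zero-dimensional Bessel bridges (Lemma \ref{lem:process_h}), and defines the barrier events on $h$ rather than on the raw local times. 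Your proposal, as written, would either lose the decoupling (if you keep all radii) or leave you with hard-to-estimate discrete-time Bessel probabilities (if you restrict to dyadic radii). Beyond these two points, the identification $m=\sqrt{2/\pi}\,\mu$ via the Bessel-$3$ entrance density, the nondegeneracy via a positive-probability bootstrap, and the nonatomicity via an energy/two-point bound all match the paper's reasoning in spirit.
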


Our next main result is the construction of critical Brownian multiplicative chaos as a limit of subcritical measures. Before stating such a result, we need to ensure that we can make sense of the subcritical measures simultaneously for all $\gamma \in (0,2)$.

\begin{proposition}
\label{prop:process_subcritical}
Let $\Mc$ be the set of finite Borel measures on $\R^2$.
The process $\gamma \in (0,2) \mapsto m^\gamma \in \Mc$ of subcritical Brownian multiplicative chaos measures possesses a modification such that for all continuous nonnegative function $f$, $\gamma \in (0,2) \mapsto \int f dm^\gamma \in \R$ is lower semi-continuous.
\end{proposition}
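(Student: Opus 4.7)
My plan is to bootstrap from the $\gamma$-regularity of the finite-$\eps$ approximations to produce an almost surely continuous (in particular lower semi-continuous) modification of $\gamma \mapsto m^\gamma$. The starting observation is that, from \eqref{eq:def_measure_subcritical_normalisation}, the density
\[
\rho_\eps^\gamma(x):=\sqrt{|\log\eps|}\,\eps^{\gamma^2/2}\exp\!\bigl(\gamma\sqrt{L_{x,\eps}(\tau)/\eps}\bigr)
\]
is, for fixed Brownian trajectory, real-analytic in $\gamma\in(0,2)$; hence $\gamma\mapsto\int f\,dm_\eps^\gamma=\int_D f\rho_\eps^\gamma\,dx$ is almost surely real-analytic on $(0,2)$ for every continuous nonnegative $f$ compactly supported in $D$. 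This regularity at fixed $\eps$ is the only positive input one has a priori; the whole task is to transport it through the $\eps\to 0$ limit of Theorem~A.

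The heart of the argument is a Hölder-in-$\gamma$ estimate for the limit process: for any $f\in C_c^+(D)$, any compact $K\subset(0,2)$, and suitable exponents $p>1$, $\delta>0$,
\[
\EXPECT{x_0}{\bigl|m^\gamma(f)-m^{\gamma'}(f)\bigr|^p}\leq C_{K,f,p}\,|\gamma-\gamma'|^{1+\delta},\qquad \gamma,\gamma'\in K.
\]
I would extract this from the explicit expressions for the joint moments $\EXPECT{x_0}{m^\gamma(f)\,m^{\gamma'}(f)}$ (and higher) in terms of Brownian local-time kernels obtained in \cite{jegoGMC, AidekonHuShi2018}; differentiating these formulas in $\gamma$ and controlling them uniformly on compact parameter ranges yields the bound, after choosing $p$ close to $1$ so that the relevant moments are finite throughout $K$ (recall $\EXPECT{x_0}{m^\gamma(D)^q}<\infty$ requires $q<4/\gamma^2$).

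By Kolmogorov--Chentsov's criterion, this estimate produces, for each $f$ in a countable dense family $\Fc\subset C_c^+(D)$, an almost surely continuous-in-$\gamma$ modification of $\gamma\mapsto m^\gamma(f)$ on each compact subinterval of $(0,2)$. Choosing these modifications consistently across $\Fc$ and invoking Riesz representation (the nonnegativity of each $m^\gamma$ ensuring the positive linear functional on $\Fc$ extends uniquely to a Borel measure) assembles the desired modification $(\tilde m^\gamma)_{\gamma\in(0,2)}\in\Mc$, which is then almost surely weakly continuous in $\gamma$. In particular, $\gamma\mapsto\int f\,d\tilde m^\gamma$ is continuous, hence lower semi-continuous, for every continuous nonnegative $f$.

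The principal obstacle is the increment moment estimate. In the Gaussian setting Kahane's convexity inequality would deliver it at once, but in the non-Gaussian Brownian context one has to work with the explicit joint-moment formulas of the chaos measure, in the spirit of the continuity lemma introduced later in the paper as a substitute for Kahane; keeping track of uniform constants as $\gamma,\gamma'$ vary in $K$ is the only technical point that demands genuine care.
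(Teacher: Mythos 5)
Your high-level scaffolding (Kolmogorov--Chentsov in $\gamma$ on each compact $K\subset(0,2)$, then assemble via a countable family $\Fc$ and Riesz) matches the paper's structure, but the core estimate you rely on is precisely the point where the argument breaks, and it breaks in a way the paper itself warns against in the remark following Proposition~\ref{prop:process_subcritical}.

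The gap is the increment bound $\EXPECT{x_0}{|m^\gamma(f)-m^{\gamma'}(f)|^p}\leq C\,|\gamma-\gamma'|^{1+\delta}$ with $p>1$. You propose to obtain it by ``differentiating the explicit joint-moment formulas'' of \cite{jegoGMC,AidekonHuShi2018} and ``choosing $p$ close to $1$.'' But those references give formulas for \emph{integer} moments (first and second), and for $\gamma\geq\sqrt{2}$ the second moment of $m^\gamma(D)$ is infinite, so there is nothing integer-valued to interpolate between. A fractional moment $\E[|m^\gamma(f)-m^{\gamma'}(f)|^p]$ with $1<p<4/\gamma_+^2$ has no explicit representation of the type you invoke, and controlling it in the non-Gaussian setting without Kahane's convexity inequality is exactly the obstruction the author flags. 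Your plan therefore does not close: the key estimate is asserted, not derived, and the route by which you say you would derive it is unavailable. A further symptom that something is off is that your argument, if valid, would yield \emph{continuity} of $\gamma\mapsto m^\gamma$, which is strictly stronger than the lower semi-continuity claimed; the author explicitly states that continuity is not obtained by their method precisely because of the fractional-moment issue.

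What the paper does instead, and what your proposal is missing, is a two-parameter truncation. For $\bar\gamma$ slightly above the right endpoint $\gamma_+$ of the compact range and for each $\beta>0$, the measures are restricted to a good event (a Bessel barrier at slope $\bar\gamma$, cut-off $\beta$); the restricted measures $\bar m^\gamma_\eps(\cdot,\beta)$ are bounded in $L^2$ uniformly over $[\gamma_-,\gamma_+]$, so the increment estimate can be done at $p=2$ and Kolmogorov applies, giving a genuinely continuous-in-$\gamma$ modification of $\gamma\mapsto\bar m^\gamma(A,\beta)$ for each fixed $\beta$. One then recovers $m^\gamma$ by letting $\beta\to\infty$; since $\beta\mapsto\bar m^\gamma(A,\beta)$ is nondecreasing, the limit is only a lower semi-continuous function of $\gamma$, and this is the best the method gives. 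This maneuver---pay for $L^2$ tractability with a truncation parameter $\beta$, then sacrifice continuity for semi-continuity when removing $\beta$---is the essential idea your proposal does not contain, and without it the proof does not go through.
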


\begin{theorem}
\label{th:critical_from_subcritical}
Let $ \gamma \in (0,2) \mapsto m^\gamma$ be the process of subcritical Brownian multiplicative chaos measures from Proposition \ref{prop:process_subcritical}. Then, $(2-\gamma)^{-1} m^\gamma$ converges towards $2 \mu$ as $\gamma \to 2^{-}$ in probability for the topology of weak convergence of measures.
\end{theorem}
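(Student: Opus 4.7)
My plan is to adapt to the non-Gaussian Brownian chaos setting the strategy used in the critical GMC literature (e.g., \cite{aru19,powell2018,DRSV2014a}) for showing that $(\sqrt{2d}-\gamma)^{-1} M_\gamma \to 2 M'$. The starting point is the algebraic rewriting, obtained from $\gamma^2/2 - 2 = -(2-\gamma)(2+\gamma)/2$ and $\gamma \sqrt{L/\eps} = 2\sqrt{L/\eps} - (2-\gamma)\sqrt{L/\eps}$:
\[
m_\eps^\gamma(A) \;=\; \int_A e^{(2-\gamma)\, v_\eps^\gamma(x)}\, \d m_\eps^{\gamma=2}(x), \quad v_\eps^\gamma(x) := \tfrac{\gamma+2}{2}\log(1/\eps) - \sqrt{L_{x,\eps}(\tau)/\eps}.
\]
This exhibits $m_\eps^\gamma$ as an exponential tilting of $m_\eps^{\gamma=2}$ by the defect field $v_\eps^\gamma$, which tends as $\gamma \to 2$ to $u_\eps(x) := 2\log(1/\eps) - \sqrt{L_{x,\eps}(\tau)/\eps}$, the field whose integral against $\d m_\eps^{\gamma=2}$ equals $\mu_\eps$ and which satisfies $-\mu_\eps = \partial_\gamma m_\eps^\gamma|_{\gamma=2}$. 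Differentiating the identity in $\gamma$ at $\gamma=2$ immediately recovers this derivative-martingale relation.

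The proof splits into four steps. \emph{Tightness}: show $\{(2-\gamma)^{-1} m^\gamma(f):\gamma<2\}$ is tight for continuous $f\geq 0$, via a uniformly-in-$\gamma$ truncated second moment estimate on the restriction of the measure to $\{u_\eps \leq M\}$, together with a negligibility estimate for the complement using $m_\eps \to m$. \emph{Coupled regularization scale}: choose $\eps=\eps(\gamma)\to 0$ with $(2-\gamma)\sqrt{|\log\eps(\gamma)|}\to\infty$; since $m_\eps^{\gamma=2}(f) = m_\eps(f)/\sqrt{|\log\eps|}$ and $m_\eps(f) \to m(f)$ in probability by Theorem \ref{th:convergence_measures}, the otherwise problematic term $m_{\eps(\gamma)}^{\gamma=2}(f)/(2-\gamma)$ vanishes in probability. \emph{Limit identification}: analyse the remaining integral $\int_A \big(e^{(2-\gamma)v_\eps^\gamma} - 1\big)/(2-\gamma)\,\d m_\eps^{\gamma=2}$ along this coupled scale and show it tends to $2\mu(f)$; the factor $2$ should emerge from a Williams-type time-reversal decomposition of the Bessel process governing the radial part of the Brownian motion near a rooted point $x$, or equivalently from a subleading but non-vanishing second-order contribution in the Taylor expansion of $e^{(2-\gamma)v_\eps^\gamma}$ that survives the joint limit. \emph{Upgrade to convergence in probability}: the lsc modification from Proposition \ref{prop:process_subcritical} combined with Fatou yields $\liminf_{\gamma\to 2^-} (2-\gamma)^{-1} m^\gamma(f) \geq 2\mu(f)$, and a matching $\limsup$ follows from the pointwise analysis of step three.

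The main obstacle is the third step, the limit identification. In the Gaussian setting, Kahane's convexity inequality provides the uniform moment control needed to pass to the joint limit $\eps\to 0$, $\gamma\to 2$, and the factor $2$ then follows cleanly from an exact computation. Here, with Brownian chaos, Kahane's inequality is unavailable, so one must rely on the continuity lemma mentioned in the abstract — a stochastic-calculus-based substitute — to secure the uniform-in-$\gamma$ estimates on the tilted regularized measures $\int e^{(2-\gamma)v_\eps^\gamma}\d m_\eps^{\gamma=2}$, and to correctly extract both the limit $\mu$ and the multiplicative factor $2$ through the Bessel/Williams structure of the local time field at a rooted point.
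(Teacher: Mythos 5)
Your high-level plan mirrors the paper's architecture (truncate to get $L^2$ control, identify the limit via the Bessel/continuity lemma, then remove truncations), and the algebraic rewriting $m_\eps^\gamma(A)=\int_A e^{(2-\gamma)v_\eps^\gamma}\,\d m_\eps^{\gamma=2}$ together with the identity $\mu_\eps(A)=\int_A u_\eps\,\d m_\eps^{\gamma=2}$ is correct and a natural starting point. However, several of your steps contain gaps that would block a complete proof.

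\emph{The truncation is too weak.} You propose restricting to the single-scale event $\{u_\eps\leq M\}$. This constrains $\sqrt{L_{x,\eps}(\tau)/\eps}$ only at the final radius $\eps$; it does not yield uniform $L^2$ bounds. The paper's good events $G_\eps(x)$, $G_\eps'(x)$ impose a \emph{barrier on the entire trajectory} $s\mapsto h_{x,e^{-s}}$ across all dyadic scales in $[k_x,k]$ (and, for the second layer, a sub-linear repulsion term). This multi-scale barrier, expressed for all radii via the interpolating Bessel-bridge field $h$, is what makes the two-point estimates finite; a one-scale cutoff would not.

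\emph{The coupled regularization scale does not address the statement.} Theorem~\ref{th:critical_from_subcritical} is about the measures $m^\gamma=\lim_{\eps\to 0}m_\eps^\gamma$, i.e.\ an iterated limit in which $\eps\to 0$ is taken \emph{first} for each fixed $\gamma$, and only then $\gamma\to 2$. Choosing a coupled scale $\eps=\eps(\gamma)$ gives a diagonal limit of the pre-measures $m_{\eps(\gamma)}^\gamma$, not a statement about $m^\gamma$, unless you establish uniform-in-$\gamma$ control of $\abs{(2-\gamma)^{-1}m_\eps^\gamma(f)-(2-\gamma)^{-1}m^\gamma(f)}$ for $\eps$ small; that uniformity is exactly what the $L^2$ Cauchy estimates of Proposition~\ref{prop:Cauchy_L2} (together with the scale $\eps_\gamma=\exp(-\exp(2/(2-\gamma)))$ used to make the bounds uniform in $\gamma$) deliver, and you do not sketch an argument for it. Incidentally, the term $m_\eps^{\gamma=2}(f)/(2-\gamma)$ you worry about already vanishes in the iterated limit for each fixed $\gamma$ (since $m_\eps^{\gamma=2}(f)\to 0$ by Proposition~\ref{prop:subcritical_vanishes}), so the coupling does not buy you what you think it does.

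\emph{The factor of $2$ is not actually derived.} The paper extracts it from a concrete computation: after transferring to a linear barrier problem via Girsanov (Lemma~\ref{lem:Bessel0_Bessel3}), one evaluates $\lim_{\gamma\to 2}(2-\gamma)^{-1}\bigl(1-e^{-2(2-\gamma)(\beta'-X_{t_2})}\bigr)=2(\beta'-X_{t_2})$, i.e.\ the classical formula $\Pb_0(\forall s\geq 0,\;X_s<as+b)=1-e^{-2ab}$. Your ``Williams-type time-reversal'' or ``second-order Taylor term'' suggestions are heuristics, not a derivation; a first-order Taylor expansion of $e^{(2-\gamma)v}-1$ would naively give $\mu$, not $2\mu$, and you never identify where the extra factor comes from.

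\emph{The lsc/Fatou step does not give what you claim.} Lower semi-continuity of $\gamma\mapsto m^\gamma(f)$ controls $\liminf_{\gamma\to\gamma_0}m^\gamma(f)$ at points $\gamma_0<2$ where $m^{\gamma_0}$ is defined; it says nothing at $\gamma=2$ (where $m^\gamma$ is undefined), and dividing by $(2-\gamma)\to 0$ breaks any such comparison (indeed $m^\gamma(f)\to 0$). The matching $\limsup$ is not established either. The paper avoids this entirely: convergence in probability follows from the $L^2$ convergence of the truncated measures plus the harmlessness of the truncations (Proposition~\ref{prop:first_layer_good_event}, Lemma~\ref{lem:bad_points}); Proposition~\ref{prop:process_subcritical} is used only to give meaning to the process $\gamma\mapsto m^\gamma$, not to pass the $\gamma\to 2$ limit.
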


\begin{remark}
In Proposition \ref{prop:process_subcritical}, we do not obtain continuity of the process in $\gamma$. The main difficulty here is that, in order to use Kolmogorov's continuity theorem, one has to consider moments of order larger than 1. When $\gamma \geq \sqrt{2}$, the second moment blows up and we have to deal with non-integer moments which are difficult to estimate without the use of Kahane's convexity inequalities but this tool is restricted to the Gaussian setting.
To bypass this difficulty, we apply Kolmogorov's criterion to versions of the measures that are restricted to specific `good' events allowing us to make $L^2$-computations. The drawback is that it does not yield continuity of the process but only lower semi-continuity. See Appendix \ref{sec:app_continuity}.
\end{remark}

We mention that the construction of the critical measure as a limit of subcritical measures is only partially known in the GMC realm. Such a result has first been proved to hold in the specific case of the two-dimensional GFF \cite{aru19} exploiting on the one hand the construction of Liouville measures as multiplicative cascades \cite{aru17} and on the other hand the strategy of Madaule \cite{madaule16} who proves a result analogous to Theorem \ref{th:critical_from_subcritical} in the case of multiplicative cascades/branching random walk. It has then been extended to a wide class of log-correlated Gaussian fields in dimension two by comparing them to the GFF \cite{JSW19}. In other dimensions, a natural reference log-correlated Gaussian field is lacking and the result is so far unknown. We believe that the approach we use in this paper to prove Theorem \ref{th:critical_from_subcritical} can be adapted in order to show that critical GMC measures can be built from their subcritical versions in \emph{any} dimension.\footnote{After the first version of the current paper was finished, the fact that the critical GMC measure can be obtained as a limit of the subcritical measures has been established in any dimension in \cite{powell_review}.}

Theorem \ref{th:critical_from_subcritical} can be seen as exchanging the limit in $\eps$ and the derivative with respect to $\gamma$. Surprisingly, a factor of 2 pops up when one exchanges the two:
\[
\lim_{\gamma \to 2^{-}} \lim_{\eps \to 0} \frac{(m^\gamma_\eps - m^2_\eps)}{2-\gamma}
=
\lim_{\gamma \to 2^{-}} \frac{1}{2-\gamma} m^\gamma 
= 2 \lim_{\eps \to 0} \mu_\eps
= 2 \lim_{\eps \to 0} \lim_{\gamma \to 2^{-}} \frac{(m^\gamma_\eps - m^2_\eps)}{2-\gamma}.
\]
This factor of 2 is present as well in the context of GMC \cite{aru19,JSW19} and cascades \cite{madaule16}.

Theorem \ref{th:critical_from_subcritical} is important because it hints at the universal nature of the measure $\mu$, in the following sense.
First, recall that the article \cite{jegoRW} gives an axiomatic characterisation of the subcritical measures $m^\gamma$ implying their universality in the sense that different approximations yield the same limiting measures. Thus, Theorem \ref{th:critical_from_subcritical} can be seen as showing a form of universality for $\mu$ as well.
Furthermore, the subcritical measures $m^\gamma$ are known to be conformally covariant \cite{jegoGMC,AidekonHuShi2018} and Theorem \ref{th:critical_from_subcritical} allows us to extend this conformal covariance to the critical measures.

\begin{corollary}\label{cor:conformal}
Let $\phi : D \to D'$ be a conformal map between two bounded simply connected domains. Let $x_0 \in D$ and denote by $\mu^D$ and $\mu^{D'}$ the critical Brownian multiplicative chaos measures built in Theorem \ref{th:convergence_measures} for the domains $(D,x_0)$ and $(D', \phi(x_0))$ respectively. Then we have
\[
(\mu^D \circ \phi^{-1})(dx) \overset{\mathrm{law}}{=} \abs{ \phi'(\phi^{-1}(x)) }^4 \mu^{D'}(dx).
\]
\end{corollary}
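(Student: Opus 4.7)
The plan is to transfer the conformal covariance already known at the subcritical level through the limit provided by Theorem \ref{th:critical_from_subcritical}. Recall from \cite{jegoGMC, AidekonHuShi2018} that for every $\gamma \in (0,2)$ the subcritical Brownian multiplicative chaos measures satisfy
\begin{equation*}
(m^{\gamma, D} \circ \phi^{-1})(dx) \overset{\mathrm{law}}{=} |\phi'(\phi^{-1}(x))|^{2 + \gamma^2/2} \, m^{\gamma, D'}(dx).
\end{equation*}
Divide both sides by $2 - \gamma$ and let $\gamma \to 2^{-}$. Since $2 + \gamma^2/2 \to 4$, one expects the left-hand side to converge in probability to $2(\mu^D \circ \phi^{-1})(dx)$ and the right-hand side to $2\,|\phi'(\phi^{-1}(x))|^{4} \mu^{D'}(dx)$; equality in law is stable under convergence in probability, so the corollary follows after dividing by $2$.

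For the left-hand side, the pushforward map $\nu \mapsto \nu \circ \phi^{-1}$ is continuous for the weak topology on Radon measures supported in $D$ (resp.\ $D'$): for any test $f \in C_c(D')$, we have $\int f \, d(\nu \circ \phi^{-1}) = \int (f \circ \phi) \, d\nu$ and $f \circ \phi \in C_c(D)$ because $\phi$ is a homeomorphism $D \to D'$. Hence Theorem \ref{th:critical_from_subcritical} applied in $D$ directly gives the desired in-probability convergence of $(2-\gamma)^{-1}(m^{\gamma,D} \circ \phi^{-1})$ to $2(\mu^D \circ \phi^{-1})$.

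For the right-hand side, fix $f \in C_c(D')$, set $K := \operatorname{supp} f \subset D'$ and define $g_\gamma(x) := f(x) |\phi'(\phi^{-1}(x))|^{2 + \gamma^2/2}$. Since $\phi^{-1}(K)$ is a compact subset of $D$ on which $\phi'$ is continuous and non-vanishing, $g_\gamma \to g_2$ uniformly on $K$ as $\gamma \to 2^-$, with $g_2(x) = f(x)|\phi'(\phi^{-1}(x))|^4$, and the $g_\gamma$ are uniformly bounded in $C_c(D')$. Combining this uniform convergence of the weight with the in-probability weak convergence $(2-\gamma)^{-1} m^{\gamma, D'} \to 2\mu^{D'}$ given by Theorem \ref{th:critical_from_subcritical} applied in $D'$ yields
\begin{equation*}
\int_{D'} f(x) \, (2-\gamma)^{-1} |\phi'(\phi^{-1}(x))|^{2+\gamma^2/2} m^{\gamma, D'}(dx) \xrightarrow[\gamma \to 2^-]{\P} 2 \int_{D'} f(x) |\phi'(\phi^{-1}(x))|^4 \mu^{D'}(dx).
\end{equation*}

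The only genuine subtlety is checking that weak-in-probability convergence really does commute with multiplication by the varying weight $|\phi'(\phi^{-1}(\cdot))|^{2+\gamma^2/2}$; this is handled by the standard estimate that if $\nu_\gamma \to \nu$ in probability weakly and $g_\gamma \to g$ uniformly on a compact set containing $\operatorname{supp} f$, then $\int f g_\gamma \, d\nu_\gamma \to \int f g \, d\nu$ in probability, which follows from splitting $\int f g_\gamma \, d\nu_\gamma - \int f g \, d\nu = \int f(g_\gamma - g)\, d\nu_\gamma + (\int f g\, d\nu_\gamma - \int f g\, d\nu)$ and using tightness from the probability-convergence of $\nu_\gamma$. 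No additional Brownian-motion input is needed beyond the subcritical conformal covariance and Theorem \ref{th:critical_from_subcritical}.
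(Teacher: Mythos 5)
Your proof takes essentially the same route as the paper: divide the known subcritical conformal covariance by $2-\gamma$ and pass to the limit $\gamma\to 2^-$ using Theorem~\ref{th:critical_from_subcritical}, noting that equality in law is preserved under convergence in probability on both sides. The paper states this in two lines; you spell out the technical details (weak continuity of the pushforward $\nu\mapsto\nu\circ\phi^{-1}$, uniform convergence of the weight $|\phi'(\phi^{-1}(\cdot))|^{2+\gamma^2/2}$ on compacts, and the standard splitting argument to combine a varying weight with weak-in-probability convergence), all of which are correct. The one item the paper explicitly addresses and you omit is that \cite[Corollary~1.4~(iv)]{jegoGMC}, which you cite for the subcritical identity \eqref{eq:proof_cor_conformal}, is stated there only for domains with boundary given by finitely many analytic curves; the paper remarks that this restriction is unnecessary because the underlying characterisation, \cite[Proposition~6.2]{jegoGMC}, only needs the domain to be bounded and simply connected. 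Without that remark your invocation of the subcritical conformal covariance is formally applied outside its stated hypotheses, so you should add a sentence to cover that point; otherwise your argument matches the paper's.
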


\begin{proof}
Let $\gamma \in (0,2)$ and denote by $m^{\gamma,D}$ and $m^{\gamma,D'}$ the subcritical measures built in Theorem \ref{thmm:subcritical_measures} for the domains $(D,x_0)$ and $(D',\phi(x_0))$ respectively. By \cite[Corollary 1.4 \textit{(iv)}]{jegoGMC}, it is known that
\begin{equation}
\label{eq:proof_cor_conformal}
(m^{\gamma,D} \circ \phi^{-1})(dx) \overset{\mathrm{law}}{=} \abs{ \phi'(\phi^{-1}(x)) }^{2+\gamma^2/2} m^{\gamma,D'}(dx).
\end{equation}
By Theorem \ref{th:critical_from_subcritical}, we obtain the desired result by dividing both sides of the above equality by $2(2-\gamma)$ and then by letting $\gamma \to 2$.

Let us note that in \cite{jegoGMC} the conformal covariance \eqref{eq:proof_cor_conformal} of the subcritical measures is stated between domains that are assumed to have a boundary composed of a finite number of analytic curves. This extra assumption was made to match the framework of \cite{AidekonHuShi2018} but we emphasise that it is useless in our context. Proposition 6.2 of \cite{jegoGMC} only requires the domain to be bounded and simply connected. This proposition characterises the law of $m^{\gamma,D}$ together with the Brownian motion from which it has been built. The conformal covariance then follows from this proposition as it is written in Section 5 of \cite{AidekonHuShi2018}.
\end{proof}

Note that we could not hope to apply directly the approach used in the subcritical case to prove conformal covariance at criticality. Indeed, in the subcritical regime, this is based on a characterisation of the law of the couple formed by the measure together with the Brownian motion from which it has been built. This characterisation is in turn based on $L^1$ computations that are infinite at criticality (Theorem \ref{th:convergence_measures}, point \ref{thmthm:first_moment}).

\subsection{Conjecture on the supremum of local times of random walk}
\label{subsec:open_questions}

In recent years, much effort has been put in the study of the supremum of log-correlated fields, the ultimate goal being the convergence in distribution of the supremum properly centred.  In many examples, the limiting law is a Gumbel distribution randomly shifted by the log of the total mass of an associated critical chaos. 
This has been established for example in the following instances: branching random walk \cite{aidekon2013}, local times of random walk on regular trees \cite{abe2018}, cover time of binary trees \cite{CortinesLouidorSaglietti2018,DRZ2019}, discrete GFF \cite{BramsonDingZeitouni}, log-correlated Gaussian field \cite{madaule15,DingRoyZeitouni17}. See \cite{arguin17,Shi15} and \cite[Section 2]{BiskupLouidor2016a} for more references. By analogy with these results, it is natural to make the following conjecture that we present in the more natural setting of random walk.

For $x \in \Z^2$ and $N \geq 1$, let $\ell_x^N$ be the total number of times a planar simple random walk starting from the origin has visited the vertex $x$ before exiting the square $[-N,N]^2$. Define a random Borel measure $\mu_N$ on $\R^2 \times \R$ by setting for all Borel sets $A \subset \R^2$ and $T \subset \R$, 
\[
\mu_N(A \times T) := \sum_{x \in \Z^2} \indic{x/N \in A} \indic{\sqrt{\ell_x^N} - 2 \pi^{-1/2} \log N + \pi^{-1/2} \log \log N \in T}.
\]

\begin{conjecture}\label{pb:sup}
There exist constants $c_1, c_2>0$ such that $(\mu_N, N \geq 1)$ converges in distribution for the topology of vague convergence on $\R^2 \times (\R \cup \{+ \infty\})$ towards the Poisson point process
\[
\mathrm{PPP}(c_1 \mu \otimes c_2 e^{-c_2 t} dt )
\]
where $\mu$ is the critical Brownian multiplicative chaos in the domain $[-1,1]^2$ with the origin as a starting point.
In particular, for all $t \in \R$,
\[
\Prob{ \sup_{x \in \Z^2} \sqrt{\ell_x^N} \leq \frac{2}{\sqrt{\pi}} \log N - \frac{1}{\sqrt{\pi}} \log \log N + t } \xrightarrow[N \to \infty]{} \Expect{ \exp \left( -c_1 \mu( [-1,1]^2) e^{-c_2 t}  \right)  }.
\]
\end{conjecture}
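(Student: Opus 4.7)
The plan is to follow the Laplace-functional strategy that has become standard for log-correlated extremal processes. By the standard criterion for convergence of point processes to a Poisson point process, it suffices to show, for every compactly supported nonnegative continuous function $f$ on $[-1,1]^2 \times \R$, that
\[
\Expect{\exp\bigl(-\langle \mu_N, f\rangle\bigr)} \xrightarrow[N \to \infty]{} \Expect{\exp\Bigl(- c_1 \int_{[-1,1]^2} \mu(dx) \int_\R (1 - e^{-f(x,t)}) c_2 e^{-c_2 t}\, dt \Bigr)}.
\]
The first stage is to show that only points $x \in \Z^2$ with $\sqrt{\ell_x^N} = 2\pi^{-1/2}\log N - \pi^{-1/2}\log\log N + O(1)$ can contribute to the limit. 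The leading order is already contained in the first-moment estimates of \cite{dembo2001}, while the $\log\log N$ correction should arise from an entropic-repulsion phenomenon, exactly as in branching random walk. A strong coupling (KMT-type) would then let one replace $\ell_x^N$ by the Brownian local time $\eps^{-1} L_{x/N, \eps}(\tau)$ at scale $\eps = 1/N$, reducing the problem to a continuum statement about Brownian local times in $D = [-1,1]^2$ with starting point $x_0 = 0$.

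The second stage is to identify the intensity of the limiting PPP. Near each candidate point $x$, the profile of local times along dyadic circles centred at $x$ should, after centering, behave like a random walk conditioned to stay positive, which is precisely the mechanism underlying the Seneta--Heyde martingale $m_\eps$ and the derivative martingale $\mu_\eps$ of Theorem \ref{th:convergence_measures}. I would then run a Biskup--Louidor-type argument: attach to each approximate maximiser the cluster of nearby near-maxima; via truncated second-moment estimates show that distinct clusters are macroscopically separated in $\Z^2/N$; and identify the conditional law of the heights within a cluster (given the spatial mark) as $c_2 e^{-c_2 t} dt$. The intensity of cluster centres should then be $c_1 \mu$, with Theorem \ref{th:critical_from_subcritical} together with the axiomatic characterisation of the subcritical measures in \cite{jegoRW} providing the bridge between the microscopic computation and the normalisation of $\mu$ inherited from the derivative martingale.

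The main obstacle, in my view, is the sharp \emph{right-tail} estimate. Because the local-time field is genuinely non-Gaussian, the Gaussian toolkit (Kahane comparison, exact Markov decompositions of the GFF) driving the analogous proofs for the two-dimensional discrete GFF in \cite{BramsonDingZeitouni, BiskupLouidor2016a} is unavailable. One would have to push the stochastic-calculus continuity approach developed elsewhere in this paper to \emph{joint} thick-point events, establishing
\[
\PROB{x_0}{\sqrt{\ell_x^N} \geq \tfrac{2}{\sqrt\pi}\log N - \tfrac{1}{\sqrt\pi}\log\log N + t} \asymp e^{-c_2 t}
\]
uniformly in $x$ at macroscopic distance from $\partial D$, together with a matching two-point upper bound that rules out clustering of near-maxima outside an $O(1)$ window. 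This is the simple-random-walk analogue of the ballot/barrier ingredient in branching random walk, and the fact that this estimate is not yet accessible by non-Gaussian methods is precisely what keeps the statement a conjecture.
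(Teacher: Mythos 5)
This statement is explicitly a \emph{conjecture} in the paper (Conjecture~\ref{pb:sup}), and the paper offers no proof of it; it only motivates the form of the second-order correction $-\pi^{-1/2}\log\log N$, points to the GFF analogue of \cite{BramsonDingZeitouni,BiskupLouidor2016a}, and flags as open the problem of characterising the law of the critical measure $\mu$ once the first moment blows up. So there is no ``paper proof'' to compare against, and your text correctly recognises this: you end by conceding that the key estimate ``is not yet accessible,'' i.e.\ what you have written is a research programme, not a proof.

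Taken as such, your roadmap is sensible and in line with the paper's own framing (Laplace-functional criterion for PPP convergence, restriction to $O(1)$-near-maximal points, a KMT-type coupling from $\ell^N_x$ to $\eps^{-1}L_{x/N,\eps}(\tau)$, a Biskup--Louidor clustering step, identification of the intensity with the derivative-martingale measure via Theorem~\ref{th:critical_from_subcritical} and the characterisation of \cite{jegoRW}). But every load-bearing step is currently a gap rather than an argument. Concretely: (i) the sharp uniform right-tail estimate $\PROB{x_0}{\sqrt{\ell^N_x}\ge \tfrac{2}{\sqrt\pi}\log N-\tfrac{1}{\sqrt\pi}\log\log N+t}\asymp e^{-c_2 t}$ is asserted, not derived, and as you note the non-Gaussian setting blocks Kahane comparison and exact Markov decompositions; (ii) the two-point repulsion bound that prevents macroscopic clustering is likewise only named; (iii) the step identifying the cluster-intensity with $c_1\mu$ (rather than some other normalisation of a critical-type measure) needs the axiomatic characterisation of the critical measure, which the paper explicitly says is not yet available because $\EXPECT{x_0}{\mu(D)}=\infty$; (iv) the replacement of $\ell^N_x$ by Brownian circle local times at scale $1/N$ at the precision required for the $\log\log N$ correction and for the $O(1)$-window analysis is itself nontrivial and not spelled out. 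So the proposal neither proves the statement nor contradicts the paper; it is a plausible attack plan whose missing ingredients are exactly the ones the paper identifies as the reason the statement remains conjectural.
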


The leading order term $2 \pi^{-1/2} \log N$ has been conjectured by Erd\H{o}s and Taylor \cite{erdos_taylor1960} and proven by \cite{dembo2001}. See also \cite{rosen2006,BassRosen2007,jego2020}.
We expect $-\pi^{-1/2} \log \log N$ to be the second order term since, with this choice of constant, the expectation of $\mu_N(\R^2 \times (0,\infty))$ blows up like $\log N$. Indeed, in analogy with the case of the 2D discrete GFF (see \cite{BiskupLouidor2016a}), this should be the correct way of scaling the point measure to get a nondegenerate limit.

Let us compare this conjecture with the case of the 2D discrete GFF $(\phi_N(x))_{x \in \Z^2}$, that is the centred Gaussian vector whose covariance is given by $\E[\phi_N(x) \phi_N(y)] = \EXPECT{x}{\ell_y^N}$. \cite{BramsonDingZeitouni} (see \cite{BiskupLouidor2020} for the link with Liouville measure) showed that for all $t \in \R$,
\[
\Prob{ \sup_{x \in \Z^2} \frac{1}{\sqrt{2}} \phi_N(x) \leq \frac{2}{\sqrt{\pi}} \log N - \frac{3}{4 \sqrt{\pi}} \log \log N + t } \xrightarrow[N \to \infty]{} \Expect{ \exp \left( -c_1 \mu^L( [-1,1]^2) e^{-c_2 t}  \right)  }
\]
where $c_1, c_2>0$ are some constants and $\mu^L$ is the Liouville measure in $[-1,1]^2$.
Despite strong links between local times and half of the GFF squared (see lecture notes \cite{rosen_2014} for an overview of the topic), Conjecture \ref{pb:sup} would show that the supremum of the former is slightly smaller than the supremum of the latter, enhancing subtle differences between the two fields (see \cite[Corollary 1.1]{jegoRW} and \cite[Corollary 1.1]{jegoGMC} for results in this direction).

Let us mention that \cite{jego2020} shows results analogous to Conjecture \ref{pb:sup} in dimensions larger or equal to three and that \cite{jegoRW} establishes the subcritical analogue of Conjecture \ref{pb:sup} in dimension two.
A first step towards solving Conjecture \ref{pb:sup} might be to give a characterisation of the law of critical Brownian multiplicative chaos analogous to the subcritical characterisation of \cite{jegoRW}. Since the first moment blows up, fixing the normalisation of the measure is one of the main challenges in this regard.

\subsection{Proof outline}
\label{subsec:proof_outline}

We now explain the main ideas and difficulties of the proof of Theorems \ref{th:convergence_measures} and \ref{th:critical_from_subcritical}.

We start by recalling that, as noticed in \cite{jegoGMC}, if the domain $D$ is a disc $D = D(x,\eta)$ centred at $x$, then the local times $L_{x,r}(\tau), r >0,$ exhibit the following Markovian structure: for all $\eta' \in (0,\eta)$ and all $z \in D(0,\eta) \backslash D(0,\eta')$, under $\prob_z$ and conditioned on $L_{x,\eta'}(\tau)$,
\begin{equation}\label{eq:local_times_and_Bessel_process}
\left( \sqrt{ \frac{1}{r} L_{x,r}(\tau)  }, r = \eta' e^{-s}, s \geq 0 \right)
\overset{\mathrm{law}}{=}
\left( X_s, s \geq 0 \right)
\end{equation}
with $(X_s,s \geq 0)$ being a zero-dimensional Bessel process starting from $\sqrt{ L_{x,\eta'}(\tau) / {\eta'}}$. This is an easy consequence of rotational invariance of Brownian motion and second Ray-Knight isomorphism for local times of one-dimensional Brownian motion.
In order to exploit this relation, we will very often stop the Brownian trajectory at the first exit time $\tau_{x,R}$ of the disc $D(x,R)$, $R$ being the diameter of the domain $D$.

What makes the critical case so special is that the approximating measures are not normalised by the first moment any more (otherwise we would get a vanishing measure as shown in Proposition \ref{prop:subcritical_vanishes}). We thus need to introduce good events before being able to even make $L^1$-computations.
Defining the right events and showing that they do not change the measures with high probability is one of the crucial steps of this paper that we are about to explain. We first explain the most natural events to consider and we then explain why we will actually consider different events.

\paragraph*{Naive definition of good events}

In analogy with the case of log-correlated Gaussian fields, it is natural to consider the following events to make the measures bounded in $L^1$: let $\beta >0$ be large and for all $x \in D$ and $\eps >0$, define
\[
G_\eps(x) := \left\{ \forall \delta \in [\eps, 1], \sqrt{\frac{1}{\delta} L_{x,\delta}(\tau_{x,R})} \leq 2 \log \frac{1}{\delta} + \beta \right\}.
\]
Here, we stop the Brownian path at time $\tau_{x,R}$ to be able to use \eqref{eq:local_times_and_Bessel_process}.
One would expect $\PROB{x_0}{\bigcap_{x \in D} \bigcap_{\eps>0} G_\eps(x)} \to 1$ as $\beta \to \infty$ since, by analogy with the Gaussian case (see \cite[Corollary 2.4]{powell2018} for instance),  the following should hold true:
\begin{equation}
\label{eq:intro_sup}
\sup_{x \in D} \sup_{\eps > 0} \left( \sqrt{\frac{1}{\eps} L_{x,\eps}(\tau_{x,R})} - 2 \log \frac{1}{\eps} \right) < \infty
\quad \quad \P_{x_0}-\mathrm{a.s.}
\end{equation}
Because of the lack of self-similarity and Gaussianity of our model, showing \eqref{eq:intro_sup} turns out to be far from easy (see the introduction of Section \ref{sec:adding_good_events} for more about this). We thus take a detour to justify that the introduction of the events $G_\eps(x)$ is harmless. We first control the supremum of the more regular local times of small annuli allowing us to introduce good events associated to these local times. Crucially, these good events will be enough to make the measures bounded in $L^1$.
Using \emph{repulsion estimates} associated to zero-dimensional Bessel process $X$, we will finally be able to transfer the restrictions on the local times of annuli (requiring for all $k \geq 0$, $\min_{[k,k+1]} X \leq 2k + 2 \log(k) + \beta/2$) over to restrictions on the local times of circles (requiring for all $s \geq 0$, $X_s \leq 2s + \beta$). This is the content of Section \ref{sec:adding_good_events}.

Other repulsion estimates with a similar flavour will tell us that, once we restrict ourselves to the events $G_\eps(x)$, we will be able to restrict further the measures to the good events
\[
G_\eps'(x) := \left\{ \forall \delta \in [\eps,1], \sqrt{\frac{1}{\delta} L_{x,\delta}(\tau_{x,R})} \leq 2 \log \frac{1}{\delta} + \beta - \frac{\sqrt{|\log \delta|}}{M \log(2 + |\log \delta|)^2} \right\}
\]
for some large $M>0$. This is the content of Lemma \ref{lem:bad_points}. This second layer of good event will make the measures bounded in $L^2$ (Proposition \ref{prop:bdd_L2}). We will conclude the proof by showing that the measures restricted to the second layer of good events converge in $L^2$ (Proposition \ref{prop:Cauchy_L2}).

\paragraph*{Actual definition of good events}

We now explain why we actually define different good events. This paper extensively uses the relation \eqref{eq:local_times_and_Bessel_process} between local times and zero-dimensional Bessel process. When making $L^1$-computations, we will bound from above the local times $L_{x,\eps}(\tau)$ by $L_{x,\eps}(\tau_{x,R})$ and we will use directly \eqref{eq:local_times_and_Bessel_process}. Difficulties arise when we start to make $L^2$-computations since we need to consider local times at two different centres. We will resolve this issue with the following reasoning. Consider a Brownian excursion from $\partial D(x,1)$ to $\partial D(x,2)$ and condition on the initial and final points of the excursion (this will be important to keep track of the number of excursions). Because of this conditioning, rotational symmetry is broken and the law of the local times $(L_{x,\delta}(\tau_{x,2}), \delta \leq 1)$ is no longer given by a zero-dimensional Bessel process. But if we condition further on the fact that the excursion went deep inside $D(x,1)$, then it will have forgotten its starting position and the law of $(L_{x,\delta}(\tau_{x,2}), \delta \leq 1)$ will be very close to the one given in \eqref{eq:local_times_and_Bessel_process}. This is the content of the \textbf{continuity lemma} (Lemma \ref{lem:independence local times and exit point}) which is a much more precise version of \cite[Lemma 5.1]{jegoGMC} giving a quantitative estimate of the error in the aforementioned approximation. Importantly, this approximation cannot be true if we look at the local times $L_{x,\delta}(\tau_{x,2})$ for \emph{all} radii $\delta \leq 1$. Instead, we must restrict ourselves to dyadic radii $\delta \in \{e^{-n}, n \geq 0\}$ so that the Brownian path has enough space to forget its initial position. See Remark \ref{rem:dyadic_radii}. This is one reason why we cannot define the good events $G_\eps(x)$ and $G_\eps'(x)$ using this continuum of radii. Another reason is that it would prevent us from decoupling the two-point estimates needed in the proof of Proposition \ref{prop:Cauchy_L2} (see especially \eqref{eq:proof_prop_Cauchy_g}).

Moreover, we will not define the good events using only local times at dyadic radii neither. Indeed, doing so would then require us to estimate probabilities associated to zero-dimensional Bessel process evaluated at discrete times. These probabilities are much harder to estimate than their continuous time counterpart and our approach cannot afford to lose too much on these estimates (especially in the identifications of the different limiting measures). We will resolve this using the following surprising trick: we will consider a field $(h_{x,\delta}, x \in D, \delta \in (0,1])$ that interpolates the local times $\sqrt{\frac{1}{\delta} L_{x,\delta}(\tau_{x,R})}$ between dyadic radii by zero-dimensional Bessel bridges that have a very small range of dependence (see Lemma \ref{lem:process_h}). In this way, the one-point estimates will be the same as if we considered local times at all radii but we will be able to decouple things to make the two-point computations. We believe this new idea will be useful in subsequent studies.

\medskip

\paragraph*{Paper outline}

The rest of the paper is organised as follows. Section \ref{sec:high_level} proves Theorems \ref{th:convergence_measures} and \ref{th:critical_from_subcritical} subject to the intermediate results Proposition \ref{prop:first_layer_good_event}, Lemma \ref{lem:bad_points} and Propositions \ref{prop:bdd_L2} and \ref{prop:Cauchy_L2}. Section \ref{sec:preliminaries} collects preliminary results that will be used throughout the paper. In particular, it states and proves the continuity lemma and contains results on Bessel processes and barrier estimates associated to 1D Brownian motion. Section \ref{sec:adding_good_events} proves Proposition \ref{prop:first_layer_good_event} and Lemma \ref{lem:bad_points} showing that we can safely add the two layers of good events. Section \ref{sec:L2} is dedicated to the $L^2$ estimates needed to prove Proposition \ref{prop:bdd_L2} and \ref{prop:Cauchy_L2}. Appendix \ref{sec:appendix_bessel_bridge} justifies the existence of the field $(h_{x,\delta}, x \in D, \delta \in (0,1])$ interpolating local times with zero-dimensional Bessel bridges. Finally, Appendix \ref{sec:app_continuity} sketches the proof of Proposition \ref{prop:process_subcritical}.

\medskip

We end this introduction with some notations that will be used throughout the paper. We will denote:

\begin{notation}\label{not:bessel}
For $x > 0$ and $d \geq 0$, $\Pb^d_x$ and $\Eb_x^d$ the law and the expectation under which $(X_t)_{t \geq 0}$ is a $d$-dimensional Bessel process starting from $x$ at time 0.  $\Pb_x$ and $\Eb_x$ will denote the law and the expectation of 1D Brownian motion starting at $x$. Note that under $\Pb_x$, the process $X$ takes negative and positive values, whereas the process stays nonnegative under $\Pb_x^1$.
\end{notation}

\begin{notation}\label{not:k_x}
For $x \in D$, $k_x$ the smallest nonnegative integer such that $e^{-k_x} \leq |x-x_0|$;
\end{notation}

\begin{notation}
\label{not:R_tau_x,R}
$R$ the diameter of the domain $D$ and for $x \in D$ and $r >0$, $\tau_{x,r}$ the first hitting time of $\partial D(x,r)$;
\end{notation}

\begin{notation}
For $a_\eps \in \R, b_\eps >0, \eps > 0$, we will denote $a_\eps \lesssim b_\eps$ (resp. $a_\eps = O(b_\eps)$, resp. $a_\eps = o(b_\eps)$) if there exists some constant $C>0$ such that for all $\eps >0$, $a_\eps \leq C b_\eps$ (resp. $|a_\eps| \leq C b_\eps$, resp. $a_\eps/b_\eps \to 0$ as $\eps \to 0$). Sometimes we will emphasise the dependency on some parameter $\eta$ by writing for instance $a_\eps = o_\eta(b_\eps)$;
\end{notation}

\begin{notation}
For $x \in \R$, $(x)_+ = \max(x,0)$.
\end{notation}

\noindent
In this paper, $C, c$, etc. will denote generic constants that may vary from line to line.

\section{High level proof of Theorems \ref{th:convergence_measures} and \ref{th:critical_from_subcritical}}
\label{sec:high_level}

To ease notations, we will prove the convergences stated in Theorem \ref{th:convergence_measures} along the radii $\eps \in \{e^{-k}, k \geq 0\}$. The proof extends naturally to all radii $\eps \in (0,1]$. In particular, in what follows we will write $\sup_{\eps >0}$, $\limsup_{\eps >0}$, etc. but we actually mean $\sup_{\eps \in \{e^{-k}, k \geq 0\} }$, $\limsup_{\eps \in \{e^{-k}, k \geq 0\} }$, etc. 

\medskip

We start off by defining the field $(h_{x,\delta}, x \in D, \delta \in (0,1])$ mentioned in Section \ref{subsec:proof_outline}. 
Recall Notation \ref{not:R_tau_x,R}. We will also denote for any $x=(x_1,x_2) \in \R^2$, $\floor{x} = (\floor{x_1}, \floor{x_2})$.

\begin{lemma}\label{lem:process_h}
By enlarging the probability space we are working on if necessary, we can construct a random field $(h_{x,\delta}, x \in D, \delta \in (0,1])$ such that
\begin{itemize}
\item
for all $x \in D$, and $n \geq 0$, conditionally on $\{ L_{x,\delta}(\tau_{x,R}), \delta = e^{-n}, e^{-n-1} \},$  $(h_{x,e^{-t}}, t \in [n,n+1])$ has the law of a zero-dimensional Bessel bridge from $\sqrt{e^n L_{x,e^{-n}}(\tau_{x,R})}$ to $\sqrt{e^{n+1} L_{x,e^{-n-1}}(\tau_{x,R})}$ that is independent of $(B_t, t \geq 0)$ and $(h_{y,\delta}, y \in D, \delta \notin [e^{-n-1}, e^{-n}])$;
\item
for all $n_0 \geq 0$ and $x, y \in D$, conditionally on $\{L_{z,\delta}(\tau_{z,R}), z=x,y, \delta=e^{-n}, n \geq n_0\}$, $(h_{x,\delta}, \delta \leq e^{-n_0})$ and $(h_{y,\delta}, \delta \leq e^{-n_0})$ are independent as soon as $|x-y| \geq 2 e^{-n_0}$;
\item
for all $n \geq 0$ and $z \in e^{-n-10} \Z^2 \cap D$, $(h_{x,\delta}, x \in D, \floor{e^{n+10} x} = e^{n+10}z, e^{-n-1} \leq \delta \leq e^{-n} )$ is continuous.
\end{itemize}
\end{lemma}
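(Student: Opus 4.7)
The plan is to enlarge the underlying probability space by an independent collection of uniform random seeds, indexed by a dyadic grid at each scale, and to use these seeds to sample the required Bessel bridges as a Borel function of their random endpoints. This decouples spatial locations "for free" as soon as the supporting cells are disjoint, which is exactly what the second bullet demands.

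Concretely, I would first invoke Appendix \ref{sec:appendix_bessel_bridge} (or a standard measurable-selection/disintegration argument) to obtain a Borel map
\[
\Phi : [0,1] \times [0,\infty)^2 \to C([0,1], \R)
\]
with the property that, for every $(a,b) \in [0,\infty)^2$, $\Phi(U,a,b)$ has the law of a zero-dimensional Bessel bridge from $a$ to $b$ on $[0,1]$ when $U$ is uniform on $[0,1]$. Next I would fix, for each integer $n \geq 0$, a partition $\Gc_n$ of $\R^2$ into closed dyadic squares of side length $e^{-n}$, nested so that each cell of $\Gc_{n+1}$ is contained in a unique cell of $\Gc_n$, and denote by $Q_n(x)$ the cell of $\Gc_n$ containing $x$ (breaking ties by a deterministic convention). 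Finally I would enlarge $(\Omega,\Fc,\P_{x_0})$ to support a family $(U_{Q,n})_{n \geq 0,\, Q \in \Gc_n}$ of i.i.d.\ uniforms on $[0,1]$ independent of $(B_t)_{t \geq 0}$, and set, for $x \in D$ and $t \in [n,n+1]$,
\[
h_{x, e^{-t}} \;:=\; \Phi\!\left(U_{Q_n(x), n},\, \sqrt{e^n L_{x, e^{-n}}(\tau_{x,R})},\, \sqrt{e^{n+1} L_{x, e^{-n-1}}(\tau_{x,R})}\right)(t-n).
\]
Consecutive pieces share endpoints by construction, so this defines a single process on $\delta \in (0,1]$.

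Verification of the two bullets is then essentially bookkeeping. For the first bullet, the piece on $[n,n+1]$ depends on $B$ only through the two endpoint values $\sqrt{e^n L_{x, e^{-n}}(\tau_{x,R})}$ and $\sqrt{e^{n+1} L_{x, e^{-n-1}}(\tau_{x,R})}$, and uses the fresh independent seed $U_{Q_n(x), n}$; conditionally on these two values it therefore has the prescribed bridge law and is independent of $B$ and of all other bridge pieces, as those use distinct $U$-seeds. For the second bullet, if $|x-y| \geq 2 e^{-n_0}$ then $Q_n(x) \neq Q_n(y)$ for every $n \geq n_0$: two points in a common cell at scale $n \geq n_0$ would be at distance at most $\sqrt{2}\,e^{-n} \leq \sqrt{2}\,e^{-n_0} < 2 e^{-n_0}$. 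Hence the seeds used to build $(h_{x,\delta}, \delta \leq e^{-n_0})$ and $(h_{y,\delta}, \delta \leq e^{-n_0})$ are disjoint, and conditional independence given the listed local times follows at once.

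The only non-routine point is the Borel construction of $\Phi$ in the first step: a zero-dimensional Bessel bridge can be absorbed at $0$, and its law degenerates at boundary values of $(a,b)$, so joint measurability of the disintegration in the endpoints needs care. This is precisely what Appendix \ref{sec:appendix_bessel_bridge} is there to supply. Once $\Phi$ is in hand the rest is tautological, both bullets reducing to the fact that distinct cells at the same scale carry independent seeds, while different scales use altogether different seeds.
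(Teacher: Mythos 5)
Your high-level plan coincides with the paper's proof: enlarge the space by independent seeds indexed by a scale-$n$ grid, push them through a parameter-measurable bridge sampler whose endpoints are the local times, and read off both bullets from disjointness of cells at each scale $n \geq n_0$. The one genuinely non-routine ingredient, as you yourself flag, is the Borel map $\Phi$ with $\Phi(U,a,b)$ distributed as a zero-dimensional Bessel bridge from $a$ to $b$ when $U$ is uniform; but here your justification is circular (Appendix~\ref{sec:appendix_bessel_bridge} is precisely where this lemma is proved) or vague (a ``standard measurable-selection/disintegration argument'' is delicate exactly because of absorption at $0$ and degeneracy of the bridge law at boundary values of $(a,b)$). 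The paper settles this point constructively via Pitman's decomposition (Pitman 1982, Theorem~5.8): one samples, once and for all, independent bridges $b^{1\to 0,d=0}$, $b^{0\to 1,d=0}$, $(b^{0\to 0,d=4n})_{n\geq 1}$ together with a selector $(\alpha_{w,n})$ driven by a single uniform, and sets
$b^{u,v}_t = u\,b^{1\to 0, d=0}_t + v\,b^{0\to 1, d=0}_t + \sum_{n\geq 1}\alpha_{\sqrt{uv},n}\,b^{0\to 0, d=4n}_t$,
which is manifestly Borel in $(u,v)$; supplying this (or an equivalent) is the part you should not defer. Two smaller remarks: your ``nested'' dyadic squares of side $e^{-n}$ cannot actually be nested since $e$ is irrational, but you never use nesting, so nothing breaks; and the paper deliberately uses the finer lattice $\tfrac{1}{10}e^{-n}\Z^2$ so that its Lemma~\ref{lem:process_bessel_bridge} holds for every $\delta_0\in(0,1]$, not only $\delta_0 = e^{-n_0}$ --- for Lemma~\ref{lem:process_h} as stated, your side-$e^{-n}$ cells do suffice, by exactly the diameter bound $\sqrt{2}\,e^{-n} < 2e^{-n_0}$ you give.
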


See Appendix \ref{sec:appendix_bessel_bridge} for a proof of the existence of such a process. Note that by \eqref{eq:local_times_and_Bessel_process}, for all $n_0 \geq 0$ and for all $x \in D$, conditionally on $L_{x,e^{-n_0}}(\tau_{x,R})$, $(h_{x,e^{-s-n_0}}, s \geq 0)$ has the law of a zero-dimensional Bessel process starting from $\sqrt{e^{n_0} L_{x,e^{-n_0}}(\tau_{x,R})}$.

We now introduce the good events that we will work with: let $\beta, M >0$ be large and define for all $x \in D$ and $\eps \leq |x-x_0|$, $\eps = e^{-k}$,
\[
G_\eps(x) := \left\{ \forall s \in [k_x,k], h_{x,e^{-s}} \leq 2 s + \beta \right\}
\]
and
\[
G_\eps'(x) := \Bigg\{ \forall s \in [k_x,k], h_{x,e^{-s}} \leq 2 s + \beta - \frac{\sqrt{s}}{M \log(2+s)^2 } \Bigg\}.
\]
If $|x-x_0| < \eps$, the above good events do not impose anything by convention.
Let us mention that if $\eps = e^{-k - t_0}$ for some $k \geq 0$ and $t_0 \in (0,1)$, one would need to consider the process
\[
s \mapsto \left\{
\begin{array}{cc}
h_{x,e^{-s}} & \mathrm{if~} s \in [k_x,k],  \\
\sqrt{ e^s L_{x,e^{-s}}(\tau_{x,R}) } & \mathrm{if~} s \in [k,k+t_0]
\end{array}
\right.
\]
instead of $s \mapsto h_{x,e^{-s}}$ to define the good events when $\eps \notin \{e^{-k}, k \geq 0\}$. Again, in what follows we will restrict ourselves to $\eps \in \{e^{-k}, k \geq 0\}$ to ease notations.

We now consider modified versions of the measures $m_\eps^\gamma, \gamma \in (0,2)$, and $m_\eps$ defined respectively in \eqref{eq:def_measure_subcritical_normalisation} and \eqref{eq:def_measure_SH}:
\begin{equation}
\label{eq:def_measure_subcritical_modified}
\hat{m}^\gamma_\eps(dx) := \mathbf{1}_{G_\eps(x)} m_\eps^\gamma(dx), \quad
\dhat{m}_\eps^\gamma(dx) := \mathbf{1}_{G_\eps'(x)} \indic{|x-x_0| \geq 1/M} \hat{m}_\eps^\gamma(dx)
\end{equation}
and
\begin{equation}
\label{eq:def_measure_SH_modified}
\hat{m}_\eps(dx) := \mathbf{1}_{G_\eps(x)} m_\eps(dx), \quad
\dhat{m}_\eps(dx) := \mathbf{1}_{G_\eps'(x)} \indic{|x-x_0| \geq 1/M} \hat{m}_\eps(dx).
\end{equation}
We also consider modified versions of the measure $\mu_\eps$ defined in \eqref{eq:def_measure_derivative}: for all Borel set $A$, set
\begin{equation}
\label{eq:def_measure_derivative_modified}
\hat{\mu}_\eps(A) := \sqrt{|\log \eps|} \eps^2 \int_A \left( - \sqrt{\frac{1}{\eps} L_{x,\eps}(\tau_{x,R})} + 2 \log \frac{1}{\eps} + \beta \right) e^{2 \sqrt{\frac{1}{\eps} L_{x,\eps}(\tau)} } \mathbf{1}_{G_\eps(x)} dx
\end{equation}
and we decompose further
\[
\dhat{\mu}_\eps(dx) := \mathbf{1}_{G_\eps'(x)} \indic{|x-x_0| \geq 1/M} \hat{\mu}_\eps(dx).
\]
We emphasise that in \eqref{eq:def_measure_derivative_modified} the local times are stopped at time $\tau$ or $\tau_{x,R}$ depending on whether the local time is in the exponential or not.

A first step towards the proof of Theorem \ref{th:convergence_measures} consists in showing that these changes of measures are harmless:

\begin{proposition}\label{prop:first_layer_good_event}
Let $A$ be a Borel set.
The following three limits hold in $\P_{x_0}$-probability:
\begin{equation}
\label{eq:lem_first_layer_good_event_a}
\limsup_{\beta \to \infty} \limsup_{\eps \to 0} \abs{\hat{m}_\eps(A) - m_\eps(A)} = 0,
\end{equation}
\begin{equation}
\label{eq:lem_first_layer_good_event_b}
\limsup_{\beta \to \infty} \limsup_{\eps \to 0} \abs{\hat{\mu}_\eps(A) - \mu_\eps(A)} = 0,
\end{equation}
\begin{equation}
\label{eq:lem_first_layer_good_event_c}
\limsup_{\beta \to \infty} \limsup_{\gamma \to 2^{-}} (2-\gamma)^{-1} \limsup_{\eps \to 0} \abs{\hat{m}_\eps^\gamma(A) - m_\eps^\gamma(A)} = 0.
\end{equation}
\end{proposition}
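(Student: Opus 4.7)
All three claims amount to showing that the points $x \in A$ on which $G_\eps(x)$ fails form a rare set carrying vanishing mass for the relevant measure. The core difficulty is that $\mathbb{E}_{x_0}[m_\eps(A)]$ diverges as $\eps \to 0$, so a direct Markov bound on $m_\eps - \hat{m}_\eps$ is not available. The plan is to truncate on a global $\beta$-dependent event $F_\beta$ of overwhelming probability ($\P_{x_0}(F_\beta^c) \to 0$ as $\beta \to \infty$, uniformly in $\eps$) on which a regularised version of the field is globally controlled, and then bound the first moment of the restricted difference.

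For \eqref{eq:lem_first_layer_good_event_a}, I would first construct $F_\beta$ by controlling the supremum of a discrete-scale annuli-based surrogate for the local time field via a first-moment plus $\eps$-net union bound, using the one-point barrier estimates for the $0$-dim Bessel process stated in Section \ref{sec:preliminaries}. A repulsion estimate for the Bessel bridge field $h$ (Lemma \ref{lem:process_h}) then transfers this coarser control into the pointwise control of circles encoded in $G_\eps(x)$. The central computation is to estimate, for fixed $x$ with $|x-x_0| \geq e^{-k_x}$,
\[
\mathbb{E}_{x_0}\!\left[\mathbf{1}_{G_\eps(x)^c \cap F_\beta}\;|\log\eps|\,\eps^2\,e^{2\sqrt{(1/\eps)L_{x,\eps}(\tau)}}\right].
\]
Bounding $L_{x,\eps}(\tau) \leq L_{x,\eps}(\tau_{x,R})$, conditioning on the Brownian path up to $\tau_{x,e^{-k_x}}$, and invoking \eqref{eq:local_times_and_Bessel_process}, the inner expectation becomes a functional of a $0$-dim Bessel process $X$ started from a value controlled on $F_\beta$. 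An explicit change of measure absorbing the critical weight $\sqrt{k}\,e^{2(X_k - 2k)}$ into a probability density turns the task into estimating the tilted probability of the barrier crossing $\{\sup_{s \in [k_x,k]}(X_s - 2s) > \beta\}$; classical ballot estimates for $3$-Bessel / BM with linear drift give a uniform bound $\leq Ce^{-c\beta}$ independent of $\eps$ and $k$. Integrating over $x \in A$ yields $\mathbb{E}_{x_0}[\mathbf{1}_{F_\beta}(m_\eps(A) - \hat{m}_\eps(A))] \leq C|A|e^{-c\beta}$, which together with $\P_{x_0}(F_\beta^c) \to 0$ proves \eqref{eq:lem_first_layer_good_event_a}.

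Claim \eqref{eq:lem_first_layer_good_event_b} is handled analogously: the extra weight $2\log(1/\eps) + \beta - \sqrt{(1/\eps)L_{x,\eps}(\tau_{x,R})}$ on $G_\eps(x)^c$ is precisely the (signed) overshoot of the barrier, contributing at most a polynomial-in-overshoot factor that is swallowed by the exponential tail after the same tilt. Claim \eqref{eq:lem_first_layer_good_event_c} follows from a parallel computation in which the subcritical weight $\sqrt{|\log\eps|}\,\eps^{\gamma^2/2} e^{\gamma X_k}$ tilts the $0$-Bessel to a process with drift $\gamma$; the barrier-crossing event $\{X_s > 2s + \beta\}$ rephrases as $\{X_s - \gamma s > (2-\gamma)s + \beta\}$ for the tilted process, and careful bookkeeping of the $(2-\gamma)$-dependence in the resulting overshoot estimate produces a bound that cancels the $(2-\gamma)^{-1}$ renormalisation and still vanishes as $\beta \to \infty$. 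The most delicate piece throughout is constructing $F_\beta$ without the aid of Gaussianity or Kahane's inequality: the continuity lemma (Lemma \ref{lem:independence local times and exit point}) together with the bridge field $h$ (Lemma \ref{lem:process_h}) are precisely the tools that make such sharp Bessel comparisons possible, and the annular detour discussed in the introduction is what avoids having to prove the full pathwise supremum bound \eqref{eq:intro_sup} directly.
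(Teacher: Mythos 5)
Your proposal follows the same architecture as the paper's proof: an almost-sure control of the supremum of annulus local times on a lattice, built via first-moment union bounds, is used as a global event; repulsion estimates for drifted one-dimensional Brownian motion (after the change of measure absorbing $\sqrt{k}\,e^{2(X_k-2k)}$) then make the first moment of the mass on $G_\eps(x)^c$ vanish. This is essentially what the paper does with the event $H = \bigcap_{x,\eps} H_\eps(x)$ (Lemma \ref{lem:sup_local_time_annulus}), the $L^1$ bound (\ref{eq:proof_prop_first_layer_a}) and the barrier estimates of Lemma \ref{lem:BM_estimates}.

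Two points deserve to be stated more carefully. First, the way you phrase the key estimate---``tilted probability of the barrier crossing $\{\sup_{s\in[k_x,k]}(X_s-2s)>\beta\}$ ... $\leq Ce^{-c\beta}$''---is incorrect on its own: after the Girsanov tilt the process $X_s-2s$ is a drift-free Brownian motion, which exceeds any fixed level with probability $\Theta(1)$; no decay in $\beta$ comes from crossing alone. The decay arises only from the \emph{conjunction} of the crossing event with the annulus confinement (``$\min_{[k,k+1]}$ stays below $2\log(k+1)+\beta'$''), and the paper quantifies this with two \emph{separate} parameters $\beta'$ (for $H$) and $\beta$ (for $G_\eps(x)$), taking $\beta\to\infty$ \emph{before} $\beta'\to\infty$; the relevant bound is $\lesssim \beta^2 e^{-\beta^2/256}/\sqrt{k}$ from (\ref{eq:lem_BM_estimates_2}). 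Second, for (\ref{eq:lem_first_layer_good_event_b}) there is an additional, non-trivial step that you omit: $\hat\mu_\eps$ carries $-\sqrt{\tfrac{1}{\eps}L_{x,\eps}(\tau_{x,R})}$ in its prefactor while $\mu_\eps$ carries $-\sqrt{\tfrac{1}{\eps}L_{x,\eps}(\tau)}$, and one must show separately (the paper's (\ref{eq:proof_prop_first_layer_c})) that this replacement is asymptotically negligible in $L^1$, which is proved via the Markov property at $\tau$ and the bound $\sqrt{a+b}-\sqrt{a}\lesssim (1+b)/(1+\sqrt{a})$. Both gaps are repairable along the lines you indicate, but they are real steps rather than cosmetic bookkeeping.
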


Once the good events $G_\eps(x)$ are introduced, we can perform $L^1$ computations.
Next, we will show:

\begin{lemma}\label{lem:bad_points}
Let $A$ be a Borel set and fix $\beta >0$. We have
\begin{equation}
\label{eq:lem_bad_points_1}
\limsup_{M \to \infty}
\limsup_{\eps \to 0} \EXPECT{x_0}{ \hat{m}_\eps(A) - \dhat{m}_\eps(A) } = 0,
\end{equation}
\begin{equation}
\label{eq:lem_bad_points_2}
\limsup_{M \to \infty}
\limsup_{\eps \to 0} \EXPECT{x_0}{ \hat{\mu}_\eps(A) - \dhat{\mu}_\eps(A) } = 0,
\end{equation}
\begin{equation}
\label{eq:lem_bad_points_subcritical}
\limsup_{M \to \infty}
\limsup_{\gamma \to 2} (2-\gamma)^{-1} \limsup_{\eps \to 0 } \EXPECT{x_0}{ \hat{m}_\eps^\gamma(A) - \dhat{m}_\eps^\gamma(A) } = 0.
\end{equation}
\end{lemma}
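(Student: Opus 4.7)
I focus on \eqref{eq:lem_bad_points_1}, the other two statements following the same scheme with minor modifications indicated at the end. Fubini rewrites the left-hand side as
\[
\int_A \EXPECT{x_0}{\bigl(\mathbf{1}_{G_\eps(x)} - \indic{|x-x_0|\geq 1/M}\mathbf{1}_{G_\eps'(x)}\bigr)\,|\log\eps|\,\eps^2\, e^{2\sqrt{L_{x,\eps}(\tau)/\eps}}}\,dx.
\]
The contribution of $\{|x-x_0|<1/M\}$ is bounded by $\EXPECT{x_0}{\hat m_\eps(D(x_0,1/M))}$, which the one-point first-moment estimate underlying Proposition~\ref{prop:first_layer_good_event} shows to be $O(M^{-2}\log M)$ uniformly in $\eps$. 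It therefore suffices to estimate, uniformly in $x$ with $|x-x_0|\geq 1/M$, the one-point expectation above with the indicator $\mathbf{1}_{G_\eps(x)\setminus G_\eps'(x)}$ in place of the parenthesised expression.

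For such a point $x$, the strong Markov property at $\tau_{x,1}$, the Bessel representation \eqref{eq:local_times_and_Bessel_process}, and the Girsanov-type change of measure already used in \cite{jegoGMC,AidekonHuShi2018} to compute first moments turn the weight $|\log\eps|\,\eps^2\, e^{2\sqrt{L_{x,\eps}(\tau)/\eps}}$ into the cost of a 0-dim Bessel trajectory shadowing the linear profile $s\mapsto 2s$. The one-point expectation then equals, up to a factor uniformly bounded in $(x,\eps)$, a probability of the form
\[
\Pb^0_{\xi}\!\Bigl(\,\forall s\in[k_x,k],\ X_s\leq 2s+\beta;\ \exists s^*\in[k_x,k],\ X_{s^*}\geq 2s^*+\beta - \tfrac{\sqrt{s^*}}{M\log(2+s^*)^2}\,\Bigr),
\]
with $k=|\log\eps|$ and $\xi$ a bounded random initial value. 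Here the interpolation property of the field $h_{x,\delta}$ provided by Lemma~\ref{lem:process_h} is essential: it lets the Bessel event be written in continuous time rather than only on dyadic scales, so that the gap $\sqrt{s}/(M\log^2(2+s))$ keeps its natural continuum form.

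The heart of the proof is now an \emph{entropic repulsion} estimate asserting that the displayed probability tends to $0$ as $M\to\infty$, uniformly in $k\geq 1$ and in $\xi$ in compact sets. Intuitively, a 0-dim Bessel conditioned to stay below $s\mapsto 2s+\beta$ is repelled a distance of order $\sqrt{s}/\log(2+s)$ from the barrier at time $s$, so approaching within $\sqrt{s}/(M\log^2(2+s))$ is a rare event as $M\to\infty$. Quantitatively, I union-bound over the integer location $s^*\in\{\lceil k_x\rceil,\dots,\lfloor k\rfloor\}$ of the first approach to the barrier and apply the ballot-style barrier estimates collected in Section~\ref{sec:preliminaries} to the pieces $[k_x,s^*]$ and $[s^*,k]$; each $s^*$ contributes at most $\tfrac{C}{M}\cdot\tfrac{1}{(s^*+1)(k-s^*+1)\log(2+s^*)}$ after cancelling the usual $1/\sqrt{k}$ ballot normalisation, and summing over $s^*$ gives $O(1/M)$. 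This is the step I expect to be the main obstacle: the exact gap $\sqrt{s}/(M\log^2(2+s))$ in the definition of $G_\eps'(x)$ is tailored so that, once divided by the natural $\sqrt{s}\log(2+s)$ normalisation of the repelled Bessel, the resulting series is summable with total $O(1/M)$.

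The adaptations for \eqref{eq:lem_bad_points_2} and \eqref{eq:lem_bad_points_subcritical} are minor. For \eqref{eq:lem_bad_points_2}, the extra factor $-\sqrt{L_{x,\eps}(\tau_{x,R})/\eps}+2\log(1/\eps)+\beta$ becomes $2k-X_k+\beta$ after the change of measure; it is non-negative on $G_\eps(x)$ and contributes at most a polylogarithmic factor in $k$ through the same repulsion bound. For \eqref{eq:lem_bad_points_subcritical}, the prefactor $(2-\gamma)^{-1}$ is exactly the rate at which $\EXPECT{x_0}{\hat m^\gamma_\eps(A)}$ vanishes as $\gamma\to 2^-$ (coming from the ballot probability at the subcritical barrier $\gamma s$), so the same argument applied with the shifted barrier event $X_s\leq \gamma s+\beta$ gives the conclusion uniformly in $\gamma\to 2^-$.
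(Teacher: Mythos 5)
Your overall reduction is the same as the paper's: dispose of the $\{|x-x_0|<1/M\}$ contribution by the first-moment bound, then for $|x-x_0|\geq 1/M$ rewrite the one-point weighted expectation via \eqref{eq:local_times_and_Bessel_process} and show that the resulting Bessel quantity vanishes as $M\to\infty$ uniformly in $t=k-k_x$. Where you diverge is in the way you close the Bessel estimate, and this is also where your proposal has imprecisions.

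First, the description of the change of measure is off. You write that the weighted expectation ``equals, up to a uniformly bounded factor, a probability $\Pb^0_\xi(\cdots)$''; this cannot be right, because the $e^{2X_t}$-tilting of a zero-dimensional Bessel restricted to paths below $2s+\beta$ is, after the substitution $s\mapsto 2s+\beta-X_s$, a \emph{three}-dimensional Bessel law with explicit extra factors ($1/X_t$, $(1-(X_t-\beta')/2t)^{-1/2}$, and the $\exp(-\tfrac38\int ds/(2s-X_s+\beta')^2)$ correction), as in \eqref{eq:lem_Bessel0_Bessel3_gamma=2}--\eqref{eq:lem_Bessel0_Bessel3_bound}. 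Correspondingly, your heuristic ``a $0$-dim Bessel conditioned to stay below $2s+\beta$ is repelled $\sqrt{s}/\log(2+s)$ from the barrier'' is the wrong object: an unconditioned $0$-dim Bessel already stays below $2s+\beta$ typically, and does not repel. The repulsion is a property of the \emph{tilted} process, which after reflection is $\mathrm{BES}(3)$; that is what makes the gap $\sqrt{s}/(M\log^2(2+s))$ the right order. The paper closes the argument by applying Cauchy--Schwarz and quoting the prepared $\mathrm{BES}(3)$ repulsion estimate (Lemma \ref{lem:Bessel3_barrier}, Point \ref{lemlem:barrier}, from \cite{powell2018}), which gives uniform-in-$t$ smallness directly, with no union bound needed.

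Second, the union bound you propose over the first approach time $s^*$ is a plausible alternative route to that repulsion statement, but the per-$s^*$ contribution $\tfrac{C}{M}\cdot\tfrac{1}{(s^*+1)(k-s^*+1)\log(2+s^*)}$ is asserted, not derived, and it is not a priori the right shape: near-barrier ballot densities scale quadratically in the distance to the barrier, so the $1/M$ dependence and the $\log$-power need an actual computation. Since Lemma \ref{lem:Bessel3_barrier}.\ref{lemlem:barrier} is already available, there is no need to reprove it, and the paper does not.

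Third, the subcritical case \eqref{eq:lem_bad_points_subcritical} is more delicate than your two-line sketch. The barrier in $G'_\eps$ is still $2s+\beta$ (it is $\gamma$-independent), not $\gamma s+\beta$; after removing the drift $\gamma$ by Girsanov it becomes $(2-\gamma)s+\beta$. The paper then has to decompose on $\{X_t\geq(2-\gamma)t/4\}$, apply Lemma \ref{lem:bessel_RN_derivative} twice (to pass to $\mathrm{BES}(3)$ and to return), combine the $\mathrm{BES}(3)$ repulsion with stochastic domination, and finally invoke the sloped barrier estimate \eqref{eq:lem_BM_estimates_3} to recover the $O(2-\gamma)$ factor. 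That sequence of steps is where the $(2-\gamma)^{-1}$ is consumed; ``the same argument with a shifted barrier'' elides it.

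In summary: correct global strategy and correct choice of gap in $G'_\eps$, but the tilted process should be $\mathrm{BES}(3)$ not $\mathrm{BES}(0)$, the repulsion estimate should be quoted from Lemma \ref{lem:Bessel3_barrier} rather than re-derived by an unchecked union bound, and the subcritical case needs the more careful Girsanov manipulations the paper carries out.
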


The second layer of good events makes the sequences $(\dhat{m}_\eps(D), \eps >0)$, $(\dhat{\mu}_\eps(D), \eps >0)$ and $((2-\gamma)^{-1} \dhat{m}_\eps^\gamma(D), \gamma \in [1,2), \eps < \eps_\gamma)$ bounded in $L^2$. Here
\begin{equation}
\label{eq:def_eps_gamma}
\eps_\gamma := \exp \left( - \exp (2/(2-\gamma)) \right)
\end{equation}
goes to zero very rapidly as $\gamma \to 2$.
We recall that a sequence $(\nu_n, n \geq 1)$ of random Borel measures on $D$ is tight for the topology of weak convergence on $D$ if, and only if, the sequence $(\nu_n(D), n \geq 1)$ of real-valued random variables is tight (see \cite[Exercise 3.8]{BiskupLectures} for instance).

\begin{proposition}\label{prop:bdd_L2}
Fix $\beta >0$ and $M > 0$. We have
\begin{equation}
\label{eq:prop_L2_a}
\int_{D \times D} \sup_{\eps >0} \EXPECT{x_0}{\dhat{m}_\eps(dx) \dhat{m}_\eps(dy) } < \infty,
\end{equation}
\begin{equation}
\label{eq:prop_L2_b}
\int_{D \times D} \sup_{\eps >0} \EXPECT{x_0}{\dhat{\mu}_\eps(dx) \dhat{\mu}_\eps(dy) } < \infty,
\end{equation}
\begin{equation}
\label{eq:prop_L2_subcritical}
\int_{D \times D} \sup_{\gamma \in [1,2)} (2-\gamma)^{-2} \sup_{\eps < \eps_\gamma} \EXPECT{x_0}{\dhat{m}_\eps^\gamma(dx) \dhat{m}_\eps^\gamma(dy) } < \infty.
\end{equation}
In particular, $\sup_{\eps>0} \EXPECT{x_0}{ \dhat{\mu}_\eps(D)^2} < \infty$ and $(\dhat{\mu}_\eps, \eps >0)$ is tight for the topology of weak convergence on $D$. Moreover, any subsequential limit $\dhat{\mu}$ of $(\dhat{\mu}_\eps, \eps >0)$ satisfies: $\P_{x_0}$-a.s. simultaneously for all $x \in D$, $\dhat{\mu}(\{x\})=0$.
\end{proposition}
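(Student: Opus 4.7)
The three bounds \eqref{eq:prop_L2_a}, \eqref{eq:prop_L2_b}, \eqref{eq:prop_L2_subcritical} share the same structure; I would focus on \eqref{eq:prop_L2_a} and sketch the modifications needed for the other two. The plan is to prove an $\eps$-uniform pointwise bound on
\[
F_\eps(x,y) := \EXPECT{x_0}{\dhat{m}_\eps(dx)\,\dhat{m}_\eps(dy)}/(dx\,dy)
\]
that is integrable over $D \times D$, and then to derive the ``in particular'' statements by soft arguments.

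Fix $x, y \in D$ with $|x-x_0|, |y-x_0| \geq 1/M$ and let $n$ be the integer with $|x-y| \in [e^{-n-1}, e^{-n})$. The heart of the proof is the decoupling of the two fields $h_{x,\cdot}$ and $h_{y,\cdot}$ below scale $e^{-n}$, achieved by combining two ingredients: the continuity lemma (Lemma \ref{lem:independence local times and exit point}) applied just below scale $e^{-n}$, which replaces $(h_{x, e^{-s}})_{s \geq n+O(1)}$ and $(h_{y, e^{-s}})_{s \geq n+O(1)}$ by two conditionally independent $0$-dim Bessel processes started from the branching-scale values, up to a controlled multiplicative correction; and the Bessel-bridge independence property of $h$ (second bullet of Lemma \ref{lem:process_h}), which takes care of the interpolating pieces. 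Above scale $e^{-n}$ the two fields essentially coincide, being driven by the common Brownian excursion into $D(x, e^{-n}) \supset \{y\}$, so after decoupling $F_\eps(x,y)$ factorises, conditionally on the branching-scale values, as a single ``common'' one-point expectation up to depth $n$ times two independent ``tail'' expectations from depth $n$ to depth $k := |\log \eps|$.

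Each of these factors is then attacked by the standard Girsanov/$h$-transform for the $0$-dim Bessel process: the tilt $e^{2 X_s}$ turns $X$ into a positive process with linear drift $+2$, so that the $G_\eps'$ event $\{X_s \leq 2s + \beta - \sqrt{s}/(M\log(2+s)^2)\}$ becomes an upper barrier of slowly-varying height $\beta - \sqrt{s}/(M\log(2+s)^2)$ for an essentially-Brownian process, with probability controlled by the one-sided barrier estimates of Section \ref{sec:preliminaries}. After absorbing the $\sqrt{|\log\eps|}\,\eps^2$ normalisation of each copy of $m_\eps$, one obtains an $\eps$-uniform bound on $F_\eps(x,y)$ in which the two independent tails each contribute a factor of order $1/(M\log(2+n)^2)$ coming from the extra drop in the $G_\eps'$ barrier; these factors are what make the integral of $F_\eps$ over $D\times D$ finite after summation over dyadic scales $n$. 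For \eqref{eq:prop_L2_b}, the polynomial prefactor $(2\log(1/\eps) + \beta - \sqrt{L/\eps})_+ \lesssim \sqrt{|\log\eps|}$ in $\hat{\mu}_\eps$ only adds polynomial-in-$n$ corrections that the $G_\eps'$ barrier absorbs; for \eqref{eq:prop_L2_subcritical}, the tilt is $e^{\gamma X_s}$ and the cutoff $\eps < \eps_\gamma$ is exactly what guarantees $(2-\gamma)\sqrt{|\log\eps|} \geq 2$, placing the Bessel barrier estimates firmly in their asymptotic regime so that the $(2-\gamma)^{-2}$ prefactor is absorbed uniformly in $\gamma \in [1,2)$.

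The ``in particular'' assertions follow quickly. The bound $\sup_\eps \EXPECT{x_0}{\dhat{\mu}_\eps(D)^2} < \infty$ is the $A = D$ case of \eqref{eq:prop_L2_b}. Tightness of $(\dhat{\mu}_\eps)_\eps$ for weak convergence on the compact set $\overline D$ follows from the (weaker) uniform $L^1$ bound on total mass together with Prohorov's theorem. For simultaneous-in-$x$ non-atomicity of any subsequential weak limit $\dhat{\mu}$, for each $x\in D$ and $\delta>0$ Fatou and the weak convergence $\dhat{\mu}_\eps \to \dhat{\mu}$ along the subsequence give
\[
\EXPECT{x_0}{\dhat{\mu}(B(x,\delta))^2} \leq \liminf_{\eps \to 0} \EXPECT{x_0}{\dhat{\mu}_\eps(B(x,\delta))^2} \leq \int_{B(x,\delta)^2} \sup_\eps F_\eps(z,w)\,dz\,dw \xrightarrow[\delta \to 0]{} 0,
\]
and one concludes by applying this to a countable dense family of $x$ together with a countable sequence of $\delta$. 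The main obstacle, on which the whole argument hinges, is producing the one-sided Bessel/Brownian barrier estimate with the slowly-varying offset $\sqrt{s}/(M\log(2+s)^2)$ sharply enough to give the required $1/(M\log(2+n)^2)$ decay per tail, uniformly in the Bessel endpoint at the branching scale (and uniformly in $\gamma$ near $2$ for \eqref{eq:prop_L2_subcritical}): this is precisely what makes the sum over dyadic scales $n$ converge.
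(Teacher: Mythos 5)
Your high-level picture---decouple below the branching scale via the continuity lemma and the Bessel-bridge independence, then run Girsanov/$h$-transform Bessel barrier estimates on each decoupled tail---matches the structure of the paper's proof. However, there are two genuine gaps in the quantitative part of your argument.

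First, the decay rate you claim from the $G_\eps'$ barrier is not sufficient. You assert that each tail contributes a factor of order $1/(M\log(2+n)^2)$. But with that, the total gain is $\lesssim 1/\log(n)^4$ per dyadic scale, and $\sum_n 1/\log(n)^4 = \infty$, so the integral over $D\times D$ would not be finite. The decay that actually makes the sum converge comes from a different mechanism: after the Girsanov tilt $e^{2X}$, the process is forced to sit at height $\approx 2s + \beta$, so the constraint $h_{y,\eta} \leq 2\log\frac{1}{\eta} + \beta - \sqrt{|\log\eta|}/(M\log(2+|\log\eta|)^2)$ evaluated at the single branching scale $\eta \approx |x-y|$ yields an exponential gain $\exp\bigl(-2\sqrt{|\log\eta|}/(M\log(2+|\log\eta|)^2)\bigr)$, because the gain in the exponent of $e^{2\sqrt{L/\eta}}$ is $2\cdot\sqrt{|\log\eta|}/(M\log^2)$. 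This super-polynomial factor, not the polynomial one you wrote, is what beats the $|x-y|^{-2}$ singularity. (Also note that the paper uses this restriction on only \emph{one} of the two points; the other is bounded by the plain $L^1$ estimate \eqref{eq:lem_first_moment}.)

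Second, your proposal glosses over the control of the multiplicative error from the continuity lemma. When you apply Lemma \ref{lem:independence local times and exit point} to decouple the excursions from $\partial D(x,\alpha/e)$ to $\partial D(x,\alpha)$ that dip to scale $\eta$, you pick up a factor $(1 + p(\eta/\alpha))^{|I_x|+|I_y|}$ with $|I_x|,|I_y|$ the number of such deep excursions around $x$ and $y$. This accumulates and is not automatically $O(1)$. The paper handles this by separating \emph{two} scales $\alpha \approx |x-y|/M$ and $\eta \approx |x-y|\exp(-(\log|\log|x-y||)^6)/M$, so the continuity-lemma error is $\log(1+p(\eta/\alpha)) \lesssim \exp(-c(\log|\log|x-y||)^2)$, and by splitting on $|I_x|+|I_y| \leq N$ vs. $> N$ with $N = \exp(c_*(\log|\log|x-y||)^2/2)$, using Cauchy--Schwarz and the geometric tail of the number of deep excursions to control the large-$|I|$ part. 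Without this scale separation and tail control, the error factor from the continuity lemma could overwhelm the barrier gain. Your proposal does not address this at all, and it is not a routine fix. The ``in particular'' assertions (tightness, non-atomicity via Fatou) are fine modulo the standard caveats about continuity sets.
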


Finally, we will show:

\begin{proposition}\label{prop:Cauchy_L2}
Fix $\beta>0$ and $M>0$ and let $A$ be a Borel set. Let $(\gamma_n, n \geq 1) \in [1,2)^\N$ be a sequence converging to 2.
\begin{enumerate}
\item
$(\dhat{m}_\eps(A), \eps >0)$, $(\dhat{\mu}_\eps(A), \eps >0)$ and for all $n \geq 1$, $(\dhat{m}_\eps^{\gamma_n}(A), \eps < \eps_{\gamma_n})$ are Cauchy sequences in $L^2$. 
Let $\dhat{m}(A)$, $\dhat{\mu}(A)$ and $\dhat{m}^{\gamma_n}(A), n \geq 1$, be the limiting random variables.
\item
$\dhat{m}(A) = \sqrt{2/\pi} \dhat{\mu}(A) \quad \P_{x_0}$-a.s.
\item
$(2-\gamma_n)^{-1} \dhat{m}^{\gamma_n}(A)$ converges in $L^2$ towards $2\dhat{\mu}(A)$ as $n \to \infty$.
\end{enumerate}
\end{proposition}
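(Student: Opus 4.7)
The plan is to establish all three parts through a single second-moment and decoupling argument, building on the uniform $L^2$-bound of Proposition~\ref{prop:bdd_L2}. For~(i), since each family is already bounded in $L^2$, it suffices to show that the covariance $\EXPECT{x_0}{X_\eps X_{\eps'}}$ converges to a finite limit as $\eps,\eps'\to 0$ independently; then by polarisation $\|X_\eps-X_{\eps'}\|_2^2 = \EXPECT{x_0}{X_\eps^2}+\EXPECT{x_0}{X_{\eps'}^2}-2\EXPECT{x_0}{X_\eps X_{\eps'}}$ tends to zero. The core computation is the two-point density $\EXPECT{x_0}{\dhat{m}_\eps(dx)\,\dhat{m}_{\eps'}(dy)}/(dx\,dy)$ and its $\dhat{\mu}$ and $\dhat{m}^{\gamma_n}$ analogues. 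For $x\ne y$, I would condition on the Brownian motion up to the first time it enters $D(x,|x-y|/4)\cup D(y,|x-y|/4)$, and then apply the continuity lemma outlined in Section~\ref{subsec:proof_outline} together with the short-range dependence in Lemma~\ref{lem:process_h} to replace $(h_{x,\delta})_\delta$ and $(h_{y,\delta})_\delta$ by two independent zero-dimensional Bessel processes, up to a controlled error. Via \eqref{eq:local_times_and_Bessel_process}, each factor reduces to a barrier expectation for such a process under the $G'_\eps$ constraint, which the ballot/repulsion estimates of Section~\ref{sec:adding_good_events} evaluate asymptotically; the prefactors $|\log\eps|\eps^2\,e^{2\sqrt{L_{x,\eps}(\tau)/\eps}}$ combine with these asymptotics to produce a limit depending only on $(x,y)$ and the domain geometry. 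Dominated convergence against the integrable bound from Proposition~\ref{prop:bdd_L2} then yields convergence of the covariance.

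Parts~(ii) and~(iii) are identifications of the limits; by the same decoupling they reduce to one-point asymptotics for a zero-dimensional Bessel process $X$ run for time $T=\log(1/\eps)$ below the barrier $s\mapsto 2s+\beta$. The conditioning produces a repulsion at scale $\sqrt{T}$, with the rescaled gap $(2T+\beta-X_T)/\sqrt{T}$ converging under the tilted law to a Rayleigh variable $R$ with $\E[R]=\sqrt{\pi/2}$. For~(ii), the weight in $\dhat{\mu}_\eps$ differs from that in $\dhat{m}_\eps$ precisely by this gap times $1/\sqrt{|\log\eps|}$; integrating against the tilted law yields the constant $\sqrt{2/\pi}=1/\E[R]$. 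For~(iii), writing $\sqrt{L_{x,\eps}(\tau_{x,R})/\eps}=2\log(1/\eps)+\beta-Y$ with $Y\ge 0$ on $G_\eps(x)$, the algebraic identity
\begin{equation*}
\eps^{\gamma^2/2}\,e^{\gamma\sqrt{L_{x,\eps}(\tau_{x,R})/\eps}} = \eps^2\,e^{2\sqrt{L_{x,\eps}(\tau_{x,R})/\eps}}\,e^{-(2-\gamma)(\beta-Y)}\,\eps^{(2-\gamma)^2/2}
\end{equation*}
holds; dividing by $2-\gamma$ and evaluating against the Rayleigh limit of $Y/\sqrt{|\log\eps|}$ in the regime $\eps<\eps_\gamma$, where $(2-\gamma)\sqrt{|\log\eps|}\to\infty$, a Laplace computation extracts the claimed factor $2$ times the critical intensity of $\dhat{\mu}$.

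The main obstacle will be the uniform control of the two-point barrier asymptotics. The error from the continuity lemma and the subleading terms in the ballot estimates must be integrable in $(x,y)\in A\times A$ and vanish in the double limit $\eps,\eps'\to 0$; the second layer event $G'_\eps$ and the cutoff $|x-x_0|\ge 1/M$ are essential for this integrability near the diagonal and near $\partial D$. For~(iii), the Laplace asymptotics must moreover be matched uniformly along the sequence $\gamma_n\to 2$ in the regime $\eps<\eps_{\gamma_n}$, which is delicate because $\eps_{\gamma_n}$ decays doubly exponentially in $(2-\gamma_n)^{-1}$; correspondingly, the subleading corrections to the Rayleigh convergence of $Y/\sqrt{|\log\eps|}$ must be shown to be negligible on the scale $1/(2-\gamma_n)$.
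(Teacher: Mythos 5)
Your proposal captures the correct high-level strategy — split into near-diagonal and far-from-diagonal pairs, decouple via the continuity lemma and the short-range dependence of the Bessel-bridge field, then reduce to one-point Bessel barrier asymptotics — and the polarisation route for part (i) is a legitimate reformulation of what the paper does. However, there are three concrete gaps.

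First, the conditioning you propose for the two-point estimate (``the first time the trajectory enters $D(x,|x-y|/4)\cup D(y,|x-y|/4)$'') is too coarse. The paper instead conditions on a richer $\sigma$-algebra $\Fc_{x,y}$ encoding the \emph{entire} excursion structure: the number $N'_y$ of excursions from $\partial D(y,\eta/e)$ to $\partial D(y,\eta)$ before hitting $\partial D(y,R)$, their successive entry and exit points, and the local times outside small discs. This is essential because the trajectory may re-enter the neighbourhoods of $x$ and $y$ many times, and Lemma~\ref{lem:independence local times and exit point} must be applied excursion by excursion (with the cumulative error controlled as $(1\pm p(\eta'/\eta))^{|I_x|+|I_y|}$, where the high-excursion-count contribution is killed by the tail bound leading to \eqref{eq:proof_prop_bdd_L2}). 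A single first-entry stopping time does not decouple the local time fields.

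Second, your distributional identification in part (ii) is not the right one. After the change of measure $\Pb^0 \to \Pb^3$ of Lemma~\ref{lem:Bessel0_Bessel3}, the rescaled gap $(2t+\beta'-X_t)/\sqrt{t}$ behaves like a $\chi_3$ (Maxwell) variable, \emph{not} a Rayleigh variable, and the constant does not arise as $1/\Eb[R]$ but as $\Eb[1/R]$ — more precisely via $\Eb^3_r[1/X_t] = \sqrt{2/(\pi t)} + o(t^{-1/2})$ from Lemma~\ref{lem:Bessel3_barrier}, point~\ref{lemlem:1/X_t}. The numerical coincidence $1/\sqrt{\pi/2}=\sqrt{2/\pi}$ hides the fact that $\Eb[1/\chi_3]=\sqrt{2/\pi}\neq 1/\Eb[\chi_3]$, so the reasoning as written would not generalise or be verifiable.

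Third, for part (iii) your algebraic identity is correct, but the ``Laplace computation against the Rayleigh limit'' is not the right mechanism, and the crucial step is missing: after Girsanov, the barrier constraint becomes $\forall s\le t,\, X_s<(2-\gamma)s+K$ for a one-dimensional Brownian motion $X$ started at $0$, whose staying probability is exactly $1-e^{-2(2-\gamma)K}\approx 2(2-\gamma)K$. Dividing by $2-\gamma$ gives $2K$, which is precisely $2\times(\text{gap})$, i.e.\ $2\dhat{\mu}$. This one-sided crossing formula is where the factor $2$ comes from; without it, the route you sketch does not visibly produce it, and the delicate regime $\eps<\eps_\gamma$ (which ensures $t\ge\exp(2/(2-\gamma))$ so that the barrier asymptotics stabilise uniformly) is hard to control via your tilting argument.
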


We now have all the ingredients to prove Theorems \ref{th:convergence_measures} and \ref{th:critical_from_subcritical}.

\begin{proof}[Proof of Theorems \ref{th:convergence_measures} and \ref{th:critical_from_subcritical}]
Let $A$ be a Borel set. Let $\beta >0$. For all $M>0$, we have
\begin{align*}
&\limsup_{\eps, \delta \to 0} \EXPECT{x_0}{ \abs{\hat{\mu}_\eps(A) - \hat{\mu}_\delta(A)} } \\
&\leq 2 \limsup_{\eps \to 0} \EXPECT{x_0}{ \abs{\hat{\mu}_\eps(A) - \dhat{\mu}_\eps(A)} } + \limsup_{\eps, \delta \to 0} \EXPECT{x_0}{ \abs{\dhat{\mu}_\eps(A) - \dhat{\mu}_\delta(A)}^2 }^{1/2}.
\end{align*}
By Proposition \ref{prop:Cauchy_L2}, the second right hand side term vanishes whereas by Lemma \ref{lem:bad_points} the first right hand side term goes to zero as $M \to \infty$. The left hand side term being independent of $M$, it has to vanish. In other words, $(\hat{\mu}_\eps(A), \eps >0)$ converges in $L^1$ towards some $\hat{\mu}(A, \beta)$ (we keep track of the dependence in $\beta$ here). Let $\hat{\mu}(A, \infty)$ be the almost sure limit of the nondecreasing sequence $\hat{\mu}(A,\beta)$ as $\beta \to \infty$. We now have for any small $\rho >0$ and large $\beta >0$,
\begin{align*}
& \limsup_{\eps \to 0} \PROB{x_0}{ \abs{\mu_\eps(A) - \hat{\mu}(A,\infty)} > \rho} \leq \limsup_{\eps \to 0} \PROB{x_0}{ \abs{\mu_\eps(A) - \hat{\mu}_\eps(A,\beta)} > \rho/3} \\
& + \limsup_{\eps \to 0} \PROB{x_0}{ \abs{\hat{\mu}_\eps(A,\beta) - \hat{\mu}(A,\beta)} > \rho/3} + \PROB{x_0}{ \abs{\hat{\mu}(A,\beta) - \hat{\mu}(A,\infty)} > \rho/3}.
\end{align*}
The second right hand side term vanishes since $(\hat{\mu}_\eps(A,\beta), \eps >0)$ converges (in $L^1$) towards $\hat{\mu}(A,\beta)$. The third term goes to zero as $\beta \to \infty$ since $(\hat{\mu}(A,\beta), \beta >0)$ converges (almost surely) to $\hat{\mu}(A,\infty)$. The first term goes to zero as $\beta \to \infty$ by Proposition \ref{prop:first_layer_good_event}. We have thus obtained the convergence in $\P_{x_0}$-probability of $(\mu_\eps(A),\eps >0)$.

Let $(\gamma_n, n \geq 1) \in [1,2)^\N$ be a sequence converging to 2.
By mimicking the above lines, Proposition \ref{prop:first_layer_good_event}, Lemma \ref{lem:bad_points} and Proposition \ref{prop:Cauchy_L2} imply that
\[
\lim_{\eps \to 0} \left( m_\eps(A) - \sqrt{\frac{2}{\pi}} \mu_\eps(A) \right) = 0
\quad \mathrm{and} \quad
\lim_{n \to \infty} \lim_{\eps \to 0} \left( \frac{1}{2-\gamma_n} m_\eps^{\gamma_n}(A) - 2 \mu_\eps(A) \right) = 0
\]
in $\P_{x_0}$-probability. By \cite{jegoGMC}, we already know that $(m_\eps^{\gamma_n}(A), \eps >0)$ converges to $m^{\gamma_n}(A)$ in probability. We have thus obtained the convergence in probability of
$(m_\eps(A), \eps >0), (\mu_\eps(A), \eps >0)$ and $((2-\gamma_n)^{-1} m^{\gamma_n}(A), n \geq 1)$ and the limits satisfy
\[
\lim_{\eps \to 0} m_\eps(A) = \sqrt{\frac{2}{\pi}} \lim_{\eps \to 0} \mu_\eps(A)
\quad \mathrm{and} \quad
\lim_{n \to \infty} \frac{1}{2-\gamma_n} m^{\gamma_n}(A) = 2 \lim_{\eps \to 0} \mu_\eps(A).
\]
Obtaining the convergence of the measures and the identification of the limiting measures as stated in Theorems \ref{th:convergence_measures} and \ref{th:critical_from_subcritical} is now routine.

The only points that remained to be checked are points \ref{thmthm:nondegeneracy}-\ref{thmthm:nonatomicity} of Theorem \ref{th:convergence_measures}. Point \ref{thmthm:nonatomicity} follows from the fact that any subsequential limit $\dhat{\mu}$ of $(\dhat{\mu}_\eps, \eps >0)$ are non-atomic (see Proposition \ref{prop:bdd_L2}) and that $\mu(D) - \dhat{\mu}(D)$ is as small as desired (in probability, by tuning the parameters $\beta$ and $M$) by Proposition \ref{prop:first_layer_good_event} and Lemma \ref{lem:bad_points}.
We now turn to Point \ref{thmthm:first_moment}. Since $(\hat{m}_\eps(D), \eps >0)$ converges in $L^1$ towards $\hat{m}(D)$, $\EXPECT{x_0}{\hat{m}(D)} = \lim_{\eps \to 0} \EXPECT{x_0}{\hat{m}_\eps(D)}$. Now, by monotonicity, $\EXPECT{x_0}{m(D)} \geq \lim_{\beta \to \infty} \lim_{\eps \to 0} \EXPECT{x_0}{\hat{m}_\eps(D)}$ which is infinite by \eqref{eq:lem_first_moment_measures_infinity}.

Finally, let us prove Point \ref{thmthm:nondegeneracy} of Theorem \ref{th:convergence_measures}. The fact that $\mu(D)$ is finite $\P_{x_0}$-a.s. follows directly from Proposition \ref{prop:first_layer_good_event} and Lemma \ref{lem:first_moment_measures}. We now want to show that it is positive $\P_{x_0}$-a.s. By Point \ref{thmthm:first_moment} of Theorem \ref{th:convergence_measures}, we already know that it is positive with a positive probability. We are going to bootstrap this to obtain a probability equal to 1.
Let $p \geq 1$ and consider the sequence of stopping times defined by $\sigma_0^{(2)} = 0$ and for all $i \geq 1$,
\[
\sigma_i^{(1)} := \inf \{ t > \sigma_{i-1}^{(2)}, |B_t - x_{i-1}| = 2^{-p} \},
\quad
\sigma_i^{(2)} := \inf \{ t > \sigma_i^{(1)}, |B_t - x_0| = 2^{-p+1} i \}
\]
and $x_i := B_{\sigma_i^{(2)}}$.
For $i \geq 0$, let $\mu_i$ be the critical Brownian multiplicative chaos in the domain $(D(x_i,2^{-p}),x_i)$  between the times $\sigma_{i}^{(2)}$ and $\sigma_{i+1}^{(1)}$.
Let $I:= \floor{ \d(x_0, \partial D) 2^p/10}$. Since $\mu \leq \sum_{i=0}^I \mu_i$, we have
\[
\PROB{x_0}{\mu(D) = 0} \leq \PROB{x_0}{\forall i =0 \dots I, \mu_i(D(x_i,2^{-p})) = 0}.
\]
By Markov property and translation invariance, the probability on the right hand side is equal to
\[
\PROB{x_0}{\mu_0(D(x_0,2^{-p})) = 0}^{I+1}.
\]
By scaling of critical Brownian multiplicative chaos coming from Corollary \ref{cor:conformal}, the probability $\PROB{x_0}{\mu_0(D(x_0,2^{-p})) = 0}$ does not depend on $p$. Moreover, thanks to Theorem \ref{th:convergence_measures}, Point \ref{thmthm:first_moment}, it is strictly less than one. By letting $p \to \infty$, we thus deduce that $\PROB{x_0}{\mu(D) = 0} = 0$ concluding the proof.
\end{proof}

Proposition \ref{prop:subcritical_vanishes} now follows:

\begin{proof}[Proof of Proposition \ref{prop:subcritical_vanishes}]
Recall that $m_\eps^{\gamma=2}(D) = m_\eps(D) / \sqrt{\log \eps|}$. By Theorem \ref{th:convergence_measures}, $(m_\eps(D), \eps >0)$ converges in $\P_{x_0}$-probability towards a nondegenerate random variable. Hence $(m_\eps^{\gamma=2}(D), \eps >0)$ converges in $\P_{x_0}$-probability to zero as desired.
\end{proof}

The remaining of the paper is devoted to the proof of the above intermediate statements.

\section{Preliminaries}
\label{sec:preliminaries}

\subsection{Local times as exponential random variables}

In this short section we recall some results of \cite{jegoGMC} that allow us to approximate local times of circles by exponential random variables. We start by recalling the behaviour of the Green function.

\begin{lemma}[\cite{jegoGMC}, Lemma 2.1]
\label{lem:Green_function_estimate}
For all $x \in \C$, $r > \eps >0$ and $y \in \partial D(x, \eps)$, we have:
\begin{align}
\EXPECT{y}{L_{x,\eps}(\tau_{\partial D(x,r)})} & = 2 \eps \log \frac{r}{\eps}. \label{eq:lem_Green_estimate_circle}
\end{align}
\end{lemma}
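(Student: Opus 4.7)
The plan is to pass to the radial process, reducing the claim to a one-dimensional local-time computation for a $2$-dimensional Bessel process, and then to extract the expectation by Itô--Tanaka.

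By translation invariance we may take $x = 0$ and set $X_s := |B_s|$. Under $\P_y$ with $|y| = \eps$, the process $X$ is a $2$-dimensional Bessel process started at $\eps$, satisfying $\d X_s = \d W_s + \frac{1}{2 X_s}\,\d s$ for some standard one-dimensional Brownian motion $W$. The definition \eqref{eq:def_local_time_BM} of $L_{0,\eps}$ is exactly the occupation-density local time of $X$ at the level $\eps$; since $\langle X \rangle_s = s$, this agrees with the semimartingale local time $L^X_\eps$ of $X$. Consequently $L_{0,\eps}(\tau_{0,r}) = L^X_\eps(\tau^X_r)$, where $\tau^X_r := \inf\{s : X_s = r\}$, and it suffices to show $\Eb^2_\eps\!\left[L^X_\eps(\tau^X_r)\right] = 2\eps \log(r/\eps)$.

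To get the factor $2 \eps \log(r/\eps)$, I would apply Itô--Tanaka to the convex function $z \mapsto (z - \eps)^+$, stop at $\tau^X_r$, and take expectations. Using $X_0 = \eps$, $X_{\tau^X_r} = r$, and that the local-martingale part is bounded, this yields
\[
r - \eps = \Eb^2_\eps\!\left[\int_0^{\tau^X_r} \mathbf{1}_{X_s > \eps}\, \frac{\d s}{2 X_s}\right] + \tfrac{1}{2}\, \Eb^2_\eps\!\left[L^X_\eps(\tau^X_r)\right].
\]
The first expectation equals $\phi(\eps)$, where $\phi$ solves the Bessel ODE $\tfrac{1}{2}\phi''(x) + \tfrac{1}{2x}\phi'(x) = -\tfrac{1}{2x}\mathbf{1}_{x > \eps}$ on $(0,r)\setminus\{\eps\}$, with $\phi(r) = 0$, boundedness near $0$, and $C^1$ matching at $\eps$. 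Solving piecewise (constant on $(0,\eps]$; affine-plus-logarithm $-x + C\log x + D$ on $[\eps,r)$) and imposing the matching forces $C = \eps$ and $D = r - \eps\log r$, giving $\phi(\eps) = r - \eps - \eps \log(r/\eps)$. Substitution then produces the announced $2\eps \log(r/\eps)$.

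The only delicate point --- and the main potential pitfall --- is the identification of the occupation-density local time of \eqref{eq:def_local_time_BM} with the semimartingale local time of the radial process $X$; this identification relies on $\langle X \rangle_s = s$ and is precisely what is responsible for the factor $2$ in $2\eps \log(r/\eps)$. Otherwise the argument is a routine one-dimensional diffusion computation; equivalently, one could simply quote the Green density $G_r(x,y) = 2y \log(r/(x\vee y))$ of $X$ on $(0,r)$ killed at $r$ (built from scale function $\log y$ and speed measure $2y\,\d y$) and read off the answer by setting $x = y = \eps$.
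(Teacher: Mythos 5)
Your proof is correct. Note first that the paper does not actually prove this lemma — it quotes it as Lemma~2.1 of \cite{jegoGMC} — so there is no in-paper argument to compare against; what you supply is a self-contained derivation. Both of your routes are sound: (i) Itô--Tanaka for $(z-\eps)^+$ applied to the $2$-dimensional Bessel process $X=|B-x|$, using $(X_0-\eps)^+=0$ and $(X_{\tau^X_r}-\eps)^+=r-\eps$, together with the piecewise-solved potential $\phi(\eps)=r-\eps-\eps\log(r/\eps)$; and (ii) reading off $G_r(\eps,\eps)$ from the Green density $G_r(x,y)=2y\log\!\big(r/(x\vee y)\big)$ of $X$ on $(0,r)$ killed at $r$. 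Two small remarks on the wording, neither affecting the argument. First, the local-martingale part $\int_0^{\tau^X_r}\mathbf{1}_{X_s>\eps}\,\d W_s$ is not bounded; it has zero expectation because $\Eb^2_\eps[\tau^X_r]=(r^2-\eps^2)/2<\infty$, which makes it an $L^2$-bounded martingale. Second, the identification of \eqref{eq:def_local_time_BM} with the semimartingale local time does rely on $\langle X\rangle_s=s$ (it guarantees the occupation density is $L^\eps$ with no extra constant) and on the a.s.\ joint continuity of $a\mapsto L^a_t$ for the radial diffusion, so that the symmetric $\tfrac{1}{2r}$-limit in \eqref{eq:def_local_time_BM} coincides with the right-continuous Tanaka local time; but the factor $2$ in the answer $2\eps\log(r/\eps)$ is more naturally attributed to the speed measure $2y\,\d y$ of the $2$-dimensional Bessel process (equivalently, the $-2y$ jump in $xG_x$ across the source point) than to the quadratic-variation normalisation.
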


In the following lemma, we denote by $\CR(x,D)$ the conformal radius of $D$ seen from $x$ and by $G_D$ the Green function of $D$ with Dirichlet boundary conditions normalised so that $G_D(x,y) \sim - \log |x-y|$ as $x \to y$. Recall also Notation \ref{not:R_tau_x,R}.

\begin{lemma}\label{lem:first_moment}
Let $\eta >0$, $x \in D$ and $\eps >0$ such that the disc $D(x,\eps)$ is included in $D$ and is at distance at least $\eta$ from $\partial D$. Let $y \in \partial D(x,\eps)$. Then $L_{x,\eps}(\tau)$ under $\P_y$ stochastically dominates and is stochastically dominated by exponential variables with mean
\[
2 \eps \log \frac{\CR(x,D)}{\eps} + o_\eta(\eps).
\]
In particular,
\begin{equation}
\label{eq:lem_first_moment}
\EXPECT{y}{e^{2 \sqrt{\frac{1}{\eps} L_{x,\eps}(\tau)}}} = (1+o_\eta(1)) 2 \sqrt{2\pi} \CR(x,D)^2 \sqrt{|\log \eps|} \eps^{-2}.
\end{equation}
Moreover, if $x_0 \notin D(x,\eps)$,
\begin{equation}
\label{eq:lem_hitting_proba}
\PROB{x_0}{\tau_{x,\eps} < \tau} = (1+o_\eta(1)) \frac{G_D(x_0,x)}{|\log \eps|}.
\end{equation}
\end{lemma}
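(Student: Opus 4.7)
The strategy combines an excursion decomposition of the Brownian trajectory between $\partial D(x,\eps)$ and an intermediate circle $\partial D(x,r_0)$ with a classical potential-theoretic identity for the Green function. Fix $r_0 = r_0(\eta) \in (0, \eta/2)$ so that $D(x, r_0) \subset D$ sits at distance $\geq \eta/2$ from $\partial D$. Under $\P_y$ with $y \in \partial D(x,\eps)$, I would decompose the path into successive cycles, each starting at a hit of $\partial D(x,\eps)$ and ending at the next hit of $\partial D(x,r_0)$, after which the trajectory either returns to $\partial D(x,\eps)$ (next cycle) or exits $D$ (termination). Let $T+1$ be the number of cycles and $\xi_0, \dots, \xi_T$ the corresponding local-time increments. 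Inside each cycle the BM is confined to the rotationally symmetric disc $D(x,r_0)$, so its radial part is a Bessel~$2$ process from $\eps$; the classical statement that the local time at $0$ of a $1$-dimensional Brownian motion accumulated before hitting $a>0$ is $\Exp(2a)$, transported to the radial coordinate via the logarithmic time change, shows that each $\xi_k \sim \Exp(\mu)$ with $\mu := 2\eps \log(r_0/\eps)$, and the $\xi_k$'s are iid by the strong Markov property at each return to $\partial D(x,\eps)$.

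To control $T$ and prove \eqref{eq:lem_hitting_proba} at once, I would use the harmonicity of $G_D(\cdot, x)$ on $D \setminus \{x\}$. Optional stopping at $\tau_{x,\eps} \wedge \tau$ under $\P_z$ gives, for any $z \in D \setminus D(x,\eps)$,
\begin{equation*}
G_D(z, x) = \PROB{z}{\tau_{x,\eps} < \tau} \cdot \bigl( |\log \eps| + \log \CR(x, D) + O_\eta(\eps) \bigr),
\end{equation*}
where the parenthesis is the value of $G_D(\cdot, x)$ on $\partial D(x,\eps)$, uniform up to $O_\eta(\eps)$ via the decomposition $G_D = -\log|\cdot - x| + g_D$ with $g_D(\cdot, x)$ smooth in an $\eta$-neighbourhood of $x$. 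Dividing and taking $z = x_0$ yields \eqref{eq:lem_hitting_proba} immediately. Taking instead $z \in \partial D(x, r_0)$, where $G_D(z, x) = \log(\CR(x, D)/r_0) + O_\eta(r_0)$, gives $p(z) := \PROB{z}{\tau_{x,\eps} < \tau}$ uniformly sandwiched by $p^\pm = \log(\CR(x,D)/r_0) / |\log \eps| \cdot (1 + o_\eta(1))$. I would then couple by introducing iid $U_k \sim \mathrm{Unif}(0,1)$ independent of the Brownian motion and declaring the true return after cycle $k$ to be $\{U_k < p(w_k)\}$ (where $w_k \in \partial D(x,r_0)$ is the cycle's exit point); this is a valid construction of the BM law, and setting $T^\pm := \min\{k : U_k \ge p^\pm\}$ gives, pathwise, $T^- \leq T \leq T^+$ together with the crucial fact that $T^\pm$ are functions of the $U_k$'s only and therefore independent of $(\xi_k)_k$.

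Consequently $L^\pm := \sum_{k=0}^{T^\pm} \xi_k$ sandwich $L_{x,\eps}(\tau)$ pathwise, and since $T^\pm$ and $(\xi_k)$ are independent, the classical identity "independent geometric sum of iid exponentials is exponential" gives $L^\pm \sim \Exp(\mu/(1-p^\pm))$. A direct arithmetic expansion shows
\begin{equation*}
\mu / (1 - p^\pm) = 2\eps \log(r_0/\eps) + 2\eps \log(\CR(x,D)/r_0) + o_\eta(\eps) = 2\eps \log(\CR(x,D)/\eps) + o_\eta(\eps),
\end{equation*}
establishing the stochastic-domination claim. For \eqref{eq:lem_first_moment}, if $X \sim \Exp(\mu^*)$, the substitution $u = \sqrt{t/\eps}$ gives
\begin{equation*}
\E\bigl[e^{2\sqrt{X/\eps}}\bigr] = \frac{2\eps}{\mu^*} \int_0^\infty u \, e^{2u - \eps u^2 / \mu^*} \, du,
\end{equation*}
and a Laplace-method evaluation around the saddle $u_* = \mu^*/\eps$ (Gaussian width $\sim\sqrt{\mu^*/\eps}$) yields $2 e^{\mu^*/\eps} \sqrt{\pi \mu^* / \eps} (1 + o_\eta(1))$. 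Plugging $\mu^* = 2\eps \log(\CR(x,D)/\eps) + o_\eta(\eps)$ gives exactly $(1+o_\eta(1)) \cdot 2\sqrt{2\pi} \, \CR(x,D)^2 \sqrt{|\log \eps|} \, \eps^{-2}$; applying this to the two sandwiching exponentials closes the proof of \eqref{eq:lem_first_moment}.

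The most delicate point, and the one that the coupling is designed to handle, is that in the true process $\xi_k$ and the return decision of cycle $k$ are \emph{not} independent: both are driven by the Brownian path inside $D(x,r_0)$, with the radial coordinate determining $\xi_k$ and the exit point $w_k$ on $\partial D(x,r_0)$ (correlated with $\xi_k$ through the angular time change) determining the return probability. A naive appeal to "geometric sum of iid exponentials" would thus be invalid; inserting the extrinsic uniforms $(U_k)_k$ compares $T$ with geometric variables that are, by construction, independent of the $\xi_k$'s, making the sandwiching argument rigorous.
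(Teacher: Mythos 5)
Your construction via cycles between $\partial D(x,\eps)$ and $\partial D(x,r_0)$, the coupling with extrinsic uniforms $(U_k)$ to decouple the return decision from the local-time increments, and the subsequent ``geometric sum of exponentials'' sandwich are all sound and are a genuinely different, self-contained route from the paper, which disposes of this lemma by citing the estimates in \cite{jegoGMC}. The Laplace-method evaluation of $\E[e^{2\sqrt{X/\eps}}]$ for $X$ exponential and the optional-stopping derivation of \eqref{eq:lem_hitting_proba} are both correct.

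There is, however, one genuine gap in the error accounting, caused by fixing $r_0 = r_0(\eta)$ independently of $\eps$. The sandwich $p^- \leq p(z) \leq p^+$ must be uniform in $z \in \partial D(x,r_0)$, and the oscillation of $G_D(\cdot,x)$ over $\partial D(x,r_0)$ is $\Theta_\eta(r_0)$ (the gradient of $g_D(\cdot,x)$ at $x$ is generically nonzero), so $p^+ - p^- \gtrsim_\eta r_0/|\log\eps|$. Propagating this through $\mu/(1-p^\pm)$, with $\mu \asymp \eps|\log\eps|$, produces an error of order $\eps r_0$ in the exponential mean. With $r_0$ fixed this is $O_\eta(\eps)$, not $o_\eta(\eps)$, and that is not a cosmetic slack: if the exponential mean is $2\eps\log(\CR(x,D)/\eps) + c\eps$ with $c \neq 0$ then $e^{\mu^*/\eps}$ picks up a spurious multiplicative factor $e^c$, so \eqref{eq:lem_first_moment} would fail by a constant. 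The assertion in your arithmetic expansion that the error is $o_\eta(\eps)$ is therefore not justified as written. The fix is standard and cheap: let $r_0 = r_0(\eps) \to 0$ slowly with $\eps$, e.g.\ $r_0 = |\log\eps|^{-1}$ (still eventually $< \eta/2$, and $\log(r_0/\eps) \sim |\log\eps|$ so the cycle exponentials are unaffected at leading order). Then $p^+ - p^- = O_\eta(r_0/|\log\eps|)$ yields $\mu/(1-p^\pm) = 2\eps\log(\CR(x,D)/\eps) + O_\eta(\eps/|\log\eps|) = 2\eps\log(\CR(x,D)/\eps) + o_\eta(\eps)$, as required. With that one modification your argument goes through.
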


\begin{proof}
\eqref{eq:lem_hitting_proba} is part of \cite[Lemma 2.2]{jegoGMC}. The claim about the stochastic dominations is a consequence of \cite[Section 2]{jegoGMC} as explained at the beginning of the proof of \cite[Proposition 3.1]{jegoGMC}. \eqref{eq:lem_first_moment} is then an easy computation with exponential variables.
\end{proof}

\subsection{Continuity lemma}

We now state a refinement of Lemma 5.1 of \cite{jegoGMC}. We indeed need a quantitative estimate on the error that we make when we forget about the exit point of the excursion.

\begin{lemma}\label{lem:independence local times and exit point}
Let $k,k',n \geq 0$ with $k' \geq k+1$ and $n \geq k' - k$. Denote $\eta = e^{-k}$, $\eta' = e^{-k'}$ and for all $i=1 \dots k' - k$, $r_i = \eta e^{-i}$. Consider $0 < r_n < \dots < r_{k'-k+1} < r_{k'-k} = \eta'$ and for $i = 1 \dots n$, $T_i \in \Bc([0,\infty))$. For any $y \in \partial D(0,\eta/e)$, we have
\begin{align}
1 - p(\eta'/\eta) \leq 
\frac{ \PROB{y}{ \forall i=1 \dots n, L_{0,r_i}(\tau_{0,\eta}) \in T_i \vert \tau_{0,\eta'} < \tau_{0,\eta}, B_{\tau_{0,\eta}}} }{ \PROB{y}{ \forall i=1 \dots n, L_{0,r_i}(\tau_{0,\eta}) \in T_i \vert \tau_{0,\eta'} < \tau_{0,\eta}} }
\leq 1 + p(\eta'/\eta)
\end{align}
with $p(u) \leq \frac{1}{c} \exp \left(- c |\log u|^{1/2} \right)$ for some universal constant $c >0$.
\end{lemma}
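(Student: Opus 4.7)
The plan is to reduce the statement to a harmonic–measure estimate by combining the strong Markov property at $\tau_{0,\eta'}$ with the rotational symmetry of local times at concentric circles. Starting from $y \in \partial D(0,\eta/e)$ and conditioning on $\{\tau_{0,\eta'} < \tau_{0,\eta}\}$, introduce the entry point $W := B_{\tau_{0,\eta'}}$, whose conditional law on $\partial D(0,\eta')$ we denote $\nu_y$. By the strong Markov property, the post–$\tau_{0,\eta'}$ trajectory has law $\P_W$; in particular $B_{\tau_{0,\eta}}$ is determined by this post–segment, while each $L_{0,r_i}(\tau_{0,\eta})$ splits into a pre–$\tau_{0,\eta'}$ contribution (nonzero only for the annular radii $r_i \geq \eta'$) and a post–$\tau_{0,\eta'}$ contribution.

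The crucial structural observation is that every $L_{0,r_i}$ is rotation–invariant. Writing $w = \eta' e^{i\phi}$ and $z = \eta e^{i\psi}$, the conditional laws of the pre– and post–$\tau_{0,\eta'}$ contributions given $W=w$ therefore depend on $w$ only through the angular differences $\phi - \arg y$ and $\psi - \phi$, respectively. Consequently, if $\nu_y$ were exactly the uniform measure on $\partial D(0,\eta')$, averaging over $\phi$ would make $B_{\tau_{0,\eta}}$ uniform on $\partial D(0,\eta)$ and independent of $(L_{0,r_i}(\tau_{0,\eta}))_i$, giving the conclusion of the lemma with zero error. The whole content of the lemma is thus a quantitative comparison between $\nu_y$ and the uniform measure on $\partial D(0,\eta')$.

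The main step is therefore to show that
\[
\bigl\| \tfrac{d\nu_y}{d\mathrm{Unif}} - 1 \bigr\|_\infty \leq p(\eta'/\eta)
\]
uniformly in $y$. This is a classical Poisson–kernel estimate in the annulus $\{\eta' < |z| < \eta\}$: expanding the density in Fourier modes in the angle, the $k$-th non–constant mode from $|y|=\eta/e$ decays like $(e\,\eta'/\eta)^{|k|}$, coming from the $r^k$ and $r^{-k}$ harmonic components. Summing over $k$ yields a polynomial–in–$\eta'/\eta$ bound, which is comfortably stronger than the stated rate $\exp(-c|\log u|^{1/3})$. With this in hand, writing the conditional probability in the lemma as a ratio of two integrals of the same rotation–covariant kernels against $\nu_y$ (a pre–kernel in $\phi - \arg y$, a post–kernel in $\psi - \phi$) and replacing $\nu_y$ by the uniform measure in both numerator and denominator yields the unconditional probability up to a multiplicative factor $1 \pm p(\eta'/\eta)$.

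The main difficulty is to make the Fourier/Poisson–kernel estimate clean, uniform in $y$ and in the Borel sets $T_i$, and strong enough to absorb the product over the angular variable $\psi$ of the final exit point. A secondary technical point is that the pre– and post–$\tau_{0,\eta'}$ contributions to the annular local times are coupled through $W$, so the argument requires a simultaneous control of both convolutions; this is what forces the comparison $d\nu_y/d\mathrm{Unif}\approx 1$ to be done in $L^\infty$ on $\partial D(0,\eta')$ rather than merely in a weaker norm.
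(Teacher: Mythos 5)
There is a genuine gap, and it is at the crux of the lemma: the reduction to ``$\nu_y$ close to uniform on $\partial D(0,\eta')$'' does not give the factorisation you claim, even in the idealised case where $\nu_y$ is \emph{exactly} uniform. Write $\phi=\arg B_{\tau_{0,\eta'}}$, $\psi=\arg B_{\tau_{0,\eta}}$, and split each $L_{0,r_i}(\tau_{0,\eta})$ as $L^{\mathrm{pre}}_i + L^{\mathrm{post}}_i$ (before/after $\tau_{0,\eta'}$). After the strong Markov property and rotational covariance, the joint law of $(L,\psi)$ is that of $\bigl(L^{\mathrm{pre}}+L^{\mathrm{post}},\ \alpha+\phi\bigr)$, where $(L^{\mathrm{pre}},\phi)$ and $(L^{\mathrm{post}},\alpha)$ are two \emph{mutually} independent but each \emph{internally correlated} pairs. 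Taking $\phi$ uniform makes $\psi=\alpha+\phi$ uniform and independent of $\alpha$, but conditionally on $\psi$ one has $\phi=\psi-\alpha$, so $L\mid\psi$ averages the family $\mu^{\mathrm{pre}}_{\psi-\alpha-\arg y}\ast\mu^{\mathrm{post}}_{\cdot\mid\alpha}$ over the (non-uniform) law of $\alpha$. This depends on $\psi$ unless at least one of the following holds: (i) the pre-local-time law $\mu^{\mathrm{pre}}_{\beta}$ does not depend on the angular shift $\beta$; (ii) the post-local-time law $\mu^{\mathrm{post}}_{\cdot\mid\alpha}$ does not depend on $\alpha$; (iii) the law of $\alpha$ is uniform. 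None of (i)–(iii) holds exactly, and their failures are precisely what the lemma must control. In fact the skew-product decomposition $B_t=|B_t|e^{i\theta_t}$, $\theta_t=w_{\sigma_t}$, shows that conditionally on the radial path (which determines all the $L_{0,r_i}$), the angular increment over any time interval is Gaussian with variance $\int dt/|B_t|^2$ — a nontrivial functional of the radial path. So the local times and the exit angle are correlated through this random clock, and that correlation does not disappear just because the marginal of $B_{\tau_{0,\eta'}}$ is uniform.

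A second warning sign is the error rate you predict. You argue the Poisson-kernel Fourier modes give a bound polynomial in $\eta'/\eta$, ``comfortably stronger'' than the stated $\exp(-c|\log(\eta'/\eta)|^{1/3})$. But the stretched-exponential with exponent $1/3$ is not an artefact of the paper's bookkeeping; it is the genuine rate, and it comes from integrating the wrapped-Gaussian error $e^{-\varsigma^2/2}$ against the tail of the random clock $\varsigma^2\gtrsim\sum_{i=1}^{k'-k}(\sigma_i^{(2)}-\sigma_i^{(1)})/(\eta'e^{i})^2$: each dyadic annulus crossing contributes an $\mathrm{Exp}$-like amount of ``angular time'', giving $\Pb(\varsigma'<t)\lesssim e^{-c(k'-k)/t^2}$ and hence $\Eb[e^{-\varsigma'}]\lesssim e^{-c(k'-k)^{1/3}}$. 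A polynomial rate would be correct for a single scale-$\eta'$ vs scale-$\eta$ harmonic-measure comparison, but that is not the mechanism here. The paper's proof therefore does not decompose at $\tau_{0,\eta'}$ at all: it conditions directly on the radial part (and hence on every $L_{0,r_i}$), writes $B_{\tau_{0,\eta}}=\eta\, e^{i\theta_0+i\varsigma\mathcal{N}}$ with $\mathcal{N}$ standard normal independent of the radial part, and then uses the conditioning $\{\tau_{0,\eta'}<\tau_{0,\eta}\}$ only to force $\varsigma$ to be large with overwhelming probability. That is the ingredient your proposal is missing.
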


\begin{remark}\label{rem:dyadic_radii}
It is crucial that we consider dyadic radii $r \in \{\eta e^{-i}, i =1 \dots k' -k\}$ between $\eta'$ and $\eta/e$ since there is no hope to obtain such a result if we were looking at the local times $L_{0,r}(\tau_{0,\eta})$ for all $r \leq \eta/e$. Indeed, if we condition the Brownian motion to spend very little time in the disc $D(0,\eta/e)$ before hitting $\partial D(0,\eta)$ (which is a function of $L_{0,r}(\tau_{0,\eta}), r \leq \eta/e$), $B_{\tau_{0,\eta}}$ will favour points on $\partial D(0,\eta)$ close to the starting position $y$, even if we condition further the trajectory to visit $D(0,\eta')$ before exiting $D(0,\eta)$.
\end{remark}

\begin{proof}[Proof of Lemma \ref{lem:independence local times and exit point}]
The proof is inspired from the one of \cite[Lemma 5.1]{jegoGMC}.
In this proof, we will write $u = \pm v$ when we mean $-v \leq u \leq v$.
To ease notations, we will denote $\tau_\eta := \tau_{0,\eta}, \tau_{\eta'} := \tau_{0,\eta'}$ and for all $i=1 \dots n, L_{r_i} := L_{0,r_i}(\tau_{0,\eta})$. Take $C \in \Bc \left( \partial D(0,\eta) \right)$. We will denote $\mathrm{Leb}(C)$ for the Lebesgue measure on $\partial D(0,\eta)$ of $C$. It is enough to show that
\begin{align}
\label{eq:proof indep1}
& \PROB{y}{B_{\tau_\eta} \in C ,\tau_{\eta'} < \tau_\eta, \forall i=1 \dots n, L_{r_i} \in T_i} \\
& ~~~~ = \left( 1 \pm \frac{1}{c} \exp \left(- c \abs{\log \frac{\eta'}{\eta}}^{1/3} \right) \right) \frac{\PROB{y}{B_{\tau_\eta} \in C, \tau_{\eta'} < \tau_\eta }}{\PROB{y}{\tau_{\eta'} < \tau_\eta}} \PROB{y}{\tau_{\eta'} < \tau_\eta, \forall i=1 \dots n, L_{r_i} \in T_i}. \nonumber
\end{align}
Moreover, establishing \eqref{eq:proof indep1} can be reduced to show that
\begin{align}
\label{eq:proof indep2}
& \PROB{y}{B_{\tau_\eta} \in C, \tau_{\eta'} < \tau_\eta, \forall i=1 \dots n, L_{r_i} \in T_i} \\
& ~~~~~~~ = \left( 1 \pm \frac{1}{c} \exp \left(- c \abs{\log \frac{\eta'}{\eta}}^{1/3} \right) \right) \frac{\mathrm{Leb}(C)}{2 \pi \eta} ~ \PROB{y}{\tau_{\eta'} < \tau_\eta, \forall i=1 \dots n, L_{r_i} \in T_i}. \nonumber
\end{align}
Indeed, applying \eqref{eq:proof indep2} to $T_i = [0,\infty)$ for all $i$ gives
\[
\PROB{y}{B_{\tau_\eta} \in C, \tau_{\eta'} < \tau_\eta }
= \left( 1 \pm \frac{1}{c} \exp \left(- c \abs{\log \frac{\eta'}{\eta}}^{1/3} \right) \right) \PROB{y}{\tau_{\eta'} < \tau_\eta} \frac{\mathrm{Leb}(C)}{2 \pi \eta},
\]
which combined with \eqref{eq:proof indep2} leads to \eqref{eq:proof indep1} with slightly different constants.
Finally, after reformulation of \eqref{eq:proof indep2}, to finish the proof we only need to prove that
\begin{equation}
\label{eq:proof prop local times indep exit point}
\PROB{y}{B_{\tau_\eta} \in C \vert \tau_{\eta'} < \tau_\eta, \forall i=1 \dots n, L_{r_i} \in T_i}
= \left( 1 \pm \frac{1}{c} \exp \left(- c \abs{\log \frac{\eta'}{\eta}}^{1/3} \right) \right) \frac{\mathrm{Leb}(C)}{2 \pi \eta}.
\end{equation}

The skew-product decomposition of Brownian motion (see \cite{kallenberg2002foundations}, Corollary 16.7 for instance) tells us that we can write
\[
(B_t,t \geq 0) \overset{\mathrm{(d)}}{=} (\abs{B_t} e^{i \theta_t}, t \geq 0)
\mathrm{~with~}
(\theta_t, t \geq 0) = (w_{\sigma_t}, t \geq 0)
\]
where $(w_t, t \geq 0)$ is a one-dimensional Brownian motion independent of the radial part $(\abs{B_t}, t \geq 0)$ and $(\sigma_t, t \geq 0)$ is a time-change that is adapted to the filtration generated by $(\abs{B_t}, t \geq 0)$:
\[
\sigma_t = \int_0^t \frac{1}{\abs{B_s}^2} ds.
\]
In particular, under $\prob_y$, we have the following equality in law
\begin{equation}
\label{eq:proof prop local times indep exit point2}
\left( \tau_\eta, \abs{B_t}, t < \tau_\eta, B_{\tau_\eta} \right)
\overset{\mathrm{(d)}}{=}
\left( \tau_\eta, \abs{B_t}, t < \tau_\eta, \eta e^{i \theta_0 + i \varsigma \Nc} \right)
\end{equation}
where $\theta_0$ is the argument of $y$, $\Nc$ is a standard normal random variable independent of the radial part $(\abs{B_t}, t \geq 0)$ and
\[
\varsigma = \sqrt{\int_0^{\tau_\eta} \frac{1}{\abs{B_s}^2} ds}.
\]

We now investigate a bit the distribution of $e^{i \theta_0 + it \Nc}$ for some $t>0$. More precisely, we want to give a quantitative description of the fact that if $t$ is large, the previous distribution should approximate the uniform distribution on the unit circle. Using the probability density function of $\Nc$ and then using Poisson summation formula, we find that the probability density function $f_t(\theta)$ of $e^{i \theta_0 + it \Nc}$ at a given angle $\theta$ is given by
\begin{align*}
f_t(\theta) & = \frac{1}{\sqrt{2 \pi} t} \sum_{n \in \Z} e^{-(\theta - \theta_0 + 2\pi n)^2/(2t^2)} = \frac{1}{2 \pi} \sum_{p \in \Z} e^{ip(\theta - \theta_0)} e^{- p^2 t^2/2} \\
& = \frac{1}{2 \pi} \left( 1 + 2 \sum_{p=1}^\infty \cos(p (\theta - \theta_0)) e^{- p^2 t^2 / 2} \right).
\end{align*}
In particular, we can control the error in the approximation mentioned above by: for all $\theta \in [0,2 \pi]$,
\[
\abs{f_t(\theta) - \frac{1}{2\pi} } \leq \frac{1}{\pi} \sum_{p=1}^\infty e^{-p^2 t^2/2} \leq C_1 \max \left( 1, \frac{1}{t} \right) e^{-t^2/2}
\]
for some universal constant $C_1>0$.

We now come back to the objective \eqref{eq:proof prop local times indep exit point}. Using the identity \eqref{eq:proof prop local times indep exit point2} and because the local times $L_{r_i}$ are measurable with respect to the radial part of Brownian motion, we have by triangle inequality
\begin{align*}
& \abs{ \PROB{y}{B_{\tau_\eta} \in C \vert \tau_{\eta'} < \tau_\eta, \forall i=1 \dots n, L_{r_i} \in T_i} - \frac{\mathrm{Leb}(C)}{2 \pi \eta} } \\
& \leq
\EXPECT{y}{ \left. \int_0^{2\pi} \abs{ f_\varsigma(\theta) - \frac{1}{2 \pi} } \indic{ \eta e^{i \theta} \in C} d \theta \right\vert \tau_{\eta'} < \tau_\eta, \forall i =1 \dots n, L_{r_i} \in T_i } \\
& \leq
C_1 \frac{\mathrm{Leb}(C)}{\eta} \EXPECT{y}{ \left. \max \left( 1, \frac{1}{\varsigma} \right) e^{-\varsigma^2/2} \right\vert \tau_{\eta'} < \tau_\eta, \forall i =1 \dots n, L_{r_i} \in T_i } \\
& \leq
C_1 \frac{\mathrm{Leb}(C)}{\eta} \EXPECT{y}{ \left. \max \left( 1, \frac{1}{\varsigma'} \right) e^{-(\varsigma')^2/2} \right\vert \tau_{\eta'} < \tau_\eta, \forall i =1 \dots n, L_{r_i} \in T_i }
\end{align*}
where
\[
\varsigma' := \sqrt{\int_{\tau_{r_n}}^{\tau_\eta} \frac{1}{\abs{B_s}^2} ds}.
\]
To conclude the proof, we want to show that
\[
\EXPECT{y}{ \left. \max \left( 1, \frac{1}{\varsigma'} \right) e^{-(\varsigma')^2/2} \right\vert \tau_{\eta'} < \tau_\eta, \forall i =1 \dots n, L_{r_i} \in T_i } \leq \frac{1}{c} \exp \left(- c \abs{\log \frac{\eta'}{\eta}}^{1/2} \right).
\]
By conditioning on the trajectory up to $\tau_{\eta'}$, it is enough to show that for any $T_i' \in \Bc([0,\infty)), i = 1 \dots n$, for any $z \in \partial D(0,\eta')$,
\begin{equation}
\label{eq:proof indep3}
\EXPECT{z}{ \left. \max \left( 1, \frac{1}{\varsigma'} \right) e^{-(\varsigma')^2/2} \right\vert \forall i =1 \dots n, L_{r_i} \in T_i' } \leq \frac{1}{c} \exp \left(- c \abs{\log \frac{\eta'}{\eta}}^{1/2} \right).
\end{equation}
In the following, we fix such $T_i'$ and such a $z$.

Consider the sequence of stopping times defined by: $\sigma_0^{(2)} :=0$ and for all $i = 1 \dots k' + k$,
\[
\sigma_i^{(1)} := \inf \left\{ t > \sigma_{i-1}^{(2)}: \abs{B_t} = \eta' e^{i-1/2} \right\}
\mathrm{~and~}
\sigma_i^{(2)} := \inf \left\{ t > \sigma_i^{(1)}: \abs{B_t} \in \{ \eta' e^{i}, \eta' e^{i-1} \} \right\}.
\]
We only keep track of the portions of trajectories during the intervals $\left[ \sigma_i^{(1)}, \sigma_i^{(2)} \right]$ by bounding from below $\varsigma'$ by
\[
(\varsigma')^2 \geq \sum_{i=1}^{k'-k} \frac{\sigma_i^{(2)} - \sigma_i^{(1)}}{(\eta' e^i)^2}.
\]
Notice that by Markov property, conditioning on $\{ \forall i =1 \dots n, L_{r_i} \in T_i' \}$ impacts the variables $\sigma_i^{(2)} - \sigma_i^{(1)}$ only through $\abs{B_{\sigma_i^{(2)}}}$.
Since
\[
v \mapsto \max \left( 1, \frac{1}{v^{1/2}} \right) e^{-v/2}
\]
is convex, we deduce by Jensen's inequality that
\begin{align*}
& \EXPECT{z}{ \left. \max \left( 1, \frac{1}{\varsigma'} \right) e^{-(\varsigma')^2/2} \right\vert \forall i =1 \dots n, L_{r_i} \in T_i' } \\
& \leq \frac{1}{k'-k} \sum_{i=1}^{k'-k} \EXPECT{z}{ \left. \max \left( 1, \frac{1}{k'-k} \frac{(\eta'e^i)^2}{\sigma_i^{(2)}-\sigma_i^{(1)}} \right)^{1/2} \exp \left( - \frac{k'-k}{2} \frac{\sigma_i^{(2)}-\sigma_i^{(1)}}{(\eta'e_i)^2} \right) \right\vert \abs{B_{\sigma_i^{(2)}}} }.
\end{align*}
By Markov property and Brownian scaling, we have obtained
\begin{align*}
& \EXPECT{z}{ \left. \max \left( 1, \frac{1}{\varsigma'} \right) e^{-(\varsigma')^2/2} \right\vert \forall i =1 \dots n, L_{r_i} \in T_i' } \\
& \leq \max_{r = 1,e^{-1}} \EXPECT{e^{-1/2}}{ \left. \max \left( 1, \frac{1}{(k'-k)\sigma_*} \right)^{1/2} \exp \left( - \frac{(k'-k)\sigma_*}{2} \right) \right\vert \abs{B_{\sigma_*}} = r }.
\end{align*}
where $\sigma_* := \inf \{ t > 0 : |B_t| \in \{ 1, e^{-1} \} \}$. Now, one can show (see \cite[Section 14]{Doob55} for instance) that there exists a universal constant $c>0$ such that for all $s \geq 1$,
\[
\EXPECT{e^{-1/2}}{e^{-s \sigma_*}} \leq e^{-c \sqrt{s}}.
\]
Since $\min_{r=1,e^{-1}} \PROB{e^{-1/2}}{|B_{\sigma_*}| = r|} \geq c$ for some universal constant $c>0$, we also have
\[
\max_{r=1,e^{-1}} \EXPECT{e^{-1/2}}{\left. e^{-s \sigma_*} \right\vert \abs{B_{\sigma_*}} = r } \leq C e^{-c \sqrt{s}}.
\]
From this, we deduce that
\[
\max_{r = 1,e^{-1}} \EXPECT{e^{-1/2}}{ \left. \max \left( 1, \frac{1}{(k'-k)\sigma_*} \right) \right\vert \abs{B_{\sigma_*}} = r } \leq C
\]
and therefore, by Cauchy--Schwarz, we obtain that
\[
\max_{r = 1,e^{-1}} \EXPECT{e^{-1/2}}{ \left. \max \left( 1, \frac{1}{(k'-k)\sigma_*} \right)^{1/2} \exp \left( - \frac{(k'-k)\sigma_*}{2} \right) \right\vert \abs{B_{\sigma_*}} = r } \leq C e^{-c \sqrt{k'-k} }.
\]
Recalling that $k'-k = \log \eta'/\eta$, this shows \eqref{eq:proof indep3} which finishes the proof of Lemma \ref{lem:independence local times and exit point}.
\end{proof}

\subsection{Bessel process}

The purpose of this section is to collect properties of Bessel processes that will be needed in this paper. Recall Notation \ref{not:bessel}.

We start off by recalling the following result that can be found for instance in the lecture notes \cite{LawlerBessel}, Proposition 2.2.

\begin{lemmaa}\label{lem:bessel_RN_derivative}
For each $x, t >0$ and $d \geq 0$, the measures $\Pb_x$ and $\Pb^d_x$, considered as measures on paths $\{X_s, s \leq t\}$, restricted to the event $\{ \forall s \leq t, X_s >0\}$ are mutually absolutely continuous with Radon-Nikodym derivative
\[
\frac{d \Pb^d_x}{d \Pb_x} = \left( \frac{X_t}{x} \right)^a \exp \left( - \frac{a(a-1)}{2} \int_0^t \frac{ds}{X_s^2} \right)
\]
where $a = (d-1)/2$.
\end{lemmaa}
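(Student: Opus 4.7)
The plan is to verify this identity by a classical It\^o/Girsanov argument. Under $\Pb_x$ the process $X$ is distributed as the absolute value of a standard Brownian motion $\beta$ started at $x$, so on the event $\{X_s > 0,\, s \leq t\}$ it coincides with $\beta$ itself and in particular satisfies $dX_s = d\beta_s$ there. Setting $a := (d-1)/2$ and
\[
M_s := (X_s/x)^a \exp\!\Bigl(-\tfrac{a(a-1)}{2} \int_0^s X_u^{-2}\, du\Bigr),
\]
the first step is to apply It\^o's formula to $F(x,y) = x^a e^{-\frac{a(a-1)}{2} y}$ along the semimartingale $\bigl(X_s, \int_0^s X_u^{-2} du\bigr)$. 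Under $\Pb_x$ the quadratic-variation correction $\tfrac{a(a-1)}{2} X_s^{a-2}\, ds$ coming from $\tfrac12\partial_{xx}F \cdot d\langle X\rangle_s$ cancels exactly the contribution $-\tfrac{a(a-1)}{2} X_s^{a-2}\, ds$ produced by $\partial_y F \cdot X_s^{-2}\, ds$. All drift terms vanish and one is left with $dM_s = (a/X_s) M_s\, d\beta_s$, so that $M$ is a nonnegative local martingale on $\{X>0\}$, in fact the Dol\'eans--Dade exponential of the drift $a/X_s$.

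The second step is a Girsanov change of measure. To handle the singularity of $a/X_s$ at $0$, I would localize at $T_n := \inf\{s : X_s \leq 1/n\} \wedge t$, on which $a/X_s$ is bounded so that $(M_{s \wedge T_n})_{s \leq t}$ is a true martingale and $M_{t\wedge T_n}\, d\Pb_x$ defines a probability measure $\widetilde\Pb_n$. Girsanov then yields that under $\widetilde\Pb_n$,
\[
\widetilde\beta_s := \beta_s - \int_0^s \frac{a}{X_u}\, du, \qquad s \leq T_n,
\]
is a standard Brownian motion, whence $dX_s = d\widetilde\beta_s + \frac{d-1}{2 X_s}\, ds$ on $[0,T_n]$. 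This is precisely the defining SDE of a $d$-dimensional Bessel process on $\{X > 0\}$, and pathwise uniqueness for that SDE away from the origin identifies $\widetilde\Pb_n$ with the restriction of $\Pb_x^d$ to $\sigma(X_s, s \leq T_n)$. Letting $n \to \infty$, the events $\{T_n = t\}$ increase monotonically to $\{\forall s \leq t,\, X_s > 0\}$, and monotone convergence upgrades the identification to the full event stated in the lemma.

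The main technical obstacle is exactly the singularity at $0$: when $d < 2$ the Bessel process almost surely hits $0$, so one cannot hope for a global Radon--Nikodym derivative between $\Pb_x^d$ and $\Pb_x$; this forces the restriction to paths staying positive, and is also why the factor $X_s^{-2}$ in the exponential must be integrated against $ds$ rather than appearing in an unstopped stochastic integral. The localization by $T_n$ is designed precisely to bypass this issue, and no Novikov-type integrability needs to be checked in closed form because martingale property is obtained after localization and the final identity extends by monotone convergence.
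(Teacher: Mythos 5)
The paper does not actually prove this lemma: it is recalled as a known fact with a citation to Lawler's lecture notes on Bessel processes (Proposition 2.2 there), so there is no ``paper's own proof'' to compare against. Your argument is the standard self-contained derivation of that fact and it is correct: Itô's formula shows that the drift terms in $dM_s$ cancel exactly, leaving the Doléans--Dade exponential $dM_s = (a/X_s)M_s\,d\beta_s$; localizing at $T_n$ makes $a/X_s$ bounded, so $M_{\cdot\wedge T_n}$ is a true martingale (Novikov is immediate) and Girsanov turns the driving Brownian motion into one with drift $a/X_s = (d-1)/(2X_s)$, i.e.\ the Bessel($d$) SDE away from the origin; weak uniqueness for that SDE on $\{X \geq 1/n\}$ identifies the tilted law with $\Pb_x^d$ stopped at $T_n$, and monotone convergence as $n\to\infty$ extends the identity to the event $\{\forall s\le t,\ X_s>0\}$, giving mutual absolute continuity with the stated density and its reciprocal. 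This is exactly the argument one would find in the cited source, so there is nothing to flag.
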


We now state a consequence of Lemma \ref{lem:bessel_RN_derivative} and Girsanov's theorem that will allow us to transfer computations on zero-dimensional Bessel process over to 1D Brownian motion and 3D Bessel process.
Let us mention that since 0 is absorbing for the zero-dimensional Bessel process $X$, we will very often write $\indic{X_t >0}$ instead of $\indic{\forall s \leq t, X_s >0}$ for this specific process.

\begin{lemma}\label{lem:Bessel0_Bessel3}
Let $\gamma \in (0,2]$, $t>0$, $r>0$ and let $f: \Cc([0,t],[0,\infty)) \to [0,\infty)$ be a nonnegative measurable function. Then
\begin{align}
\label{eq:lem_Bessel0_BM}
& \sqrt{t} e^{-\frac{\gamma^2}{2} t} \Eb^0_r \left[ e^{\gamma X_t} \indic{X_t >0} f(X_s,s \leq t) \right] \\
\nonumber
& = \sqrt{r} e^{\gamma r} \Eb_r \left[ \left( \frac{t}{X_t + \gamma t} \right)^{1/2} \exp \left( - \frac{3}{8} \int_0^t \frac{ds}{(X_s + \gamma s)^2} \right) \indic{\forall s \leq t, X_s + \gamma s > 0} f(X_s + \gamma s, s \leq t) \right].
\end{align}
In particular,
\begin{align}
\label{eq:lem_Bessel0_BM_bound}
& \sqrt{t} e^{-\frac{\gamma^2}{2} t} \Eb^0_r \left[ e^{\gamma X_t} \indic{X_t >0} f(X_s,s \leq t) \right] \leq \sqrt{r} e^{\gamma r} \Eb_r \left[ \left( \frac{t}{X_t + \gamma t} \right)_+^{1/2} f(X_s + \gamma s, s \leq t) \right].
\end{align}
Moreover,
\begin{align}
\label{eq:lem_Bessel0_Bessel3_gamma=2}
& \sqrt{t} e^{-2t} \Eb^0_r \left[ e^{2X_t} \indic{ X_t > 0} \indic{\forall s \leq t, X_s < 2s + \beta} f \left( X_s, s \leq t \right) \right] \\
& = 2^{-1/2} \sqrt{r} e^{2r} (\beta-r) \Eb_{\beta-r}^3 \Bigg[ \frac{1}{X_t} \left( 1 - \frac{X_t-\beta}{2t} \right)^{-1/2} \indic{\forall s \leq t, 2s - X_s + \beta > 0} \nonumber \\
& ~~~~~~~~~~~~~~~~~~~~~~~~~~\times f(2s - X_s + \beta, s \leq t) \exp \left( - \frac{3}{8} \int_0^t \frac{ds}{(2s-X_s+\beta)^2} \right) \Bigg]
\nonumber
\end{align}
and
\begin{align}
\label{eq:lem_Bessel0_Bessel3_bound}
& \sqrt{t} e^{-2 t} \Eb^0_r \left[ e^{2 X_t} \indic{X_t >0} \indic{\forall s \leq t, X_s < 2s + \beta} f(X_s,s \leq t) \right] \\
& \nonumber \leq \sqrt{r} e^{2 r} (\beta -r) \Eb^3_{ \beta - r} \left[ \frac{1}{X_t} \left( \frac{t}{2t + \beta - X_t} \right)_+^{1/2} f( \beta + 2s - X_s , s \leq t) \right].
\end{align}
Finally,
\begin{equation}
\label{eq:lem_Bessel_limit}
\lim_{\beta \to \infty}
\lim_{t \to \infty} t e^{-2t} \Eb^0_r \left[ e^{2X_t} \indic{\forall s \leq t, X_s < 2s + \beta} \right] = \infty.
\end{equation}
\end{lemma}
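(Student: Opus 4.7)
The lemma is a chain in which \eqref{eq:lem_Bessel0_BM} is the master identity; the other statements follow by specialization, a reflection, or by discarding nonnegative factors. I would begin with \eqref{eq:lem_Bessel0_BM} by combining two absolutely continuous changes of measure. First, Lemma~\ref{lem:bessel_RN_derivative} applied with $d=0$ (so $a=-1/2$ and the quadratic coefficient $a(a-1)/2$ equals $3/8$) transfers the $0$-dimensional Bessel expectation onto $\Pb_r$ restricted to $\{\forall s\leq t,\, X_s>0\}$; under $\Pb^0_r$ this event coincides with $\{X_t>0\}$ because $0$ is absorbing for the $0$-dimensional Bessel process. Second, Girsanov's theorem with the constant drift $\gamma$ absorbs the factor $e^{\gamma X_t}$ into $e^{\gamma r+\gamma^2 t/2}$ and shifts the underlying Brownian motion by $\gamma s$, so that the functional $f$ is evaluated at $X_s+\gamma s$ and the integrand $1/X_s^2$ becomes $1/(X_s+\gamma s)^2$. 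Multiplying through by $\sqrt{t}\,e^{-\gamma^2 t/2}$ produces exactly \eqref{eq:lem_Bessel0_BM}. The bound \eqref{eq:lem_Bessel0_BM_bound} is then immediate: drop the exponential weight and the positivity indicator (both $\leq 1$) and replace $(\cdot)^{1/2}$ by $(\cdot)_+^{1/2}$ to keep the square root well-defined on paths where $X_t+\gamma t\leq 0$.

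\paragraph*{From $0$- to $3$-dimensional Bessel.} To derive \eqref{eq:lem_Bessel0_Bessel3_gamma=2} I specialize \eqref{eq:lem_Bessel0_BM} to $\gamma=2$; the barrier $\{X_s<2s+\beta\}$ then becomes $\{X_s<\beta\}$ after the Girsanov shift. The reflection $Y_s:=\beta-X_s$ turns $Y$ into a Brownian motion starting from $\beta-r$, the condition $\{X_s<\beta\}$ into the positivity $\{Y_s>0\}$, and $\{X_s+2s>0\}$ into $\{Y_s<2s+\beta\}$. A second application of Lemma~\ref{lem:bessel_RN_derivative}, now with $d=3$ (so $a=1$ and the integral term vanishes), converts the Brownian motion conditioned to stay positive into a $3$-dimensional Bessel process starting from $\beta-r$, with Radon--Nikodym derivative $Y_t/(\beta-r)$. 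The prefactor $2^{-1/2}$ emerges from the algebraic identity $(t/(2t+\beta-Y_t))^{1/2}=2^{-1/2}(1-(Y_t-\beta)/(2t))^{-1/2}$. The bound \eqref{eq:lem_Bessel0_Bessel3_bound} then follows from \eqref{eq:lem_Bessel0_Bessel3_gamma=2} by inverting this identity, dropping the exponential weight and the positivity indicator, and taking the positive part.

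\paragraph*{The divergence \eqref{eq:lem_Bessel_limit} and main obstacle.} I apply \eqref{eq:lem_Bessel0_Bessel3_gamma=2} with $f\equiv 1$; the $\{X_t=0\}$ contribution missing from the identity is $O(te^{-2t})$ and thus vanishes. The additional factor of $\sqrt{t}$ needed on the left brings up the factor $\sqrt{t}/X_t$ inside the $3$-dimensional Bessel expectation. For fixed large $\beta$, as $t\to\infty$: the factor $(1-(X_t-\beta)/(2t))^{-1/2}$ tends a.s.\ to $1$; the barrier probability $\Pb^3_{\beta-r}(\forall s\leq t,\, X_s<2s+\beta)$ decreases to a strictly positive limit by the $\sqrt{s}$-growth of the $3$-dimensional Bessel process against the linearly growing boundary; and the exponential weight is bounded below on a set of positive probability by controlling the integrand as $1/r^2$ near $s=0$ (since $2\cdot 0-X_0+\beta=r$) and as $O(1/s^2)$ at infinity. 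A direct Gaussian computation gives $\Eb^3_{\beta-r}[\sqrt{t}/X_t]\to\sqrt{2/\pi}$, since $X_t$ is the norm of $(\beta-r,0,0)+\sqrt{t}Z$ with $Z$ a standard $3$D Gaussian, which fixes the order of magnitude. The obstacle is that $\sqrt{t}/X_t$ does not converge almost surely, so direct Fatou fails: I would remedy this by restricting the expectation to an event of the form $\{X_t\in[\sqrt{t}/C,\, C\sqrt{t}]\}\cap\{\forall s\leq t,\, X_s<s+\beta\}$, on which all the factors are uniformly bounded below, and then invoking dominated convergence. The resulting $t\to\infty$ liminf is a positive constant $c(\beta)$, and the linear prefactor $(\beta-r)$ then delivers the divergence as $\beta\to\infty$.
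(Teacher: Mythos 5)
Your proposal follows the paper's proof exactly: apply Lemma~\ref{lem:bessel_RN_derivative} with $d=0$ then Girsanov for \eqref{eq:lem_Bessel0_BM}, substitute $X\mapsto\beta-X$ and apply Lemma~\ref{lem:bessel_RN_derivative} again with $d=3$ for \eqref{eq:lem_Bessel0_Bessel3_gamma=2}, with the bounds and \eqref{eq:lem_Bessel_limit} following as direct consequences. Your elaboration of \eqref{eq:lem_Bessel_limit}, which the paper dismisses as an easy consequence, is sound and correctly identifies the only delicate point (that $\sqrt{t}/X_t$ does not converge a.s., so one must restrict to a well-behaved event or truncate before passing to the limit) and the role of Lemma~\ref{lem:Bessel3_barrier}, point~\ref{lemlem:1/X_t}.
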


\begin{proof}[Proof of Lemma \ref{lem:Bessel0_Bessel3}]
By Lemma \ref{lem:bessel_RN_derivative}, the left hand side of \eqref{eq:lem_Bessel0_BM} is equal to
\begin{align*}
\sqrt{r} \sqrt{t} e^{-\frac{\gamma^2}{2} t} \Eb_r \left[ X_t^{-1/2} \exp \left( - \frac{3}{8} \int_0^t \frac{ds}{X_s^2} \right) e^{\gamma X_t} \indic{\forall s \leq t, X_s >0} f(X_s,s \leq t) \right].
\end{align*}
Girsanov's theorem concludes the proof of \eqref{eq:lem_Bessel0_BM}. 
\eqref{eq:lem_Bessel0_BM_bound} follows directly from \eqref{eq:lem_Bessel0_BM}.
Now, by \eqref{eq:lem_Bessel0_BM}, the left hand side of \eqref{eq:lem_Bessel0_Bessel3_gamma=2} is equal to
\begin{align*}
& \sqrt{r}e^{2 r} \Eb_r \Bigg[ \left( \frac{t}{X_t + 2 t} \right)_+^{1/2} \exp \left( - \frac{3}{8} \int_0^t \frac{ds}{(X_s + 2s)^2} \right) \indic{\forall s \leq t, X_s + 2 s >0} \\
& \times \indic{\forall s \leq t, X_s < \beta} f(X_s + 2 s, s \leq t) \Bigg] \\
& = \sqrt{r}e^{2 r} \Eb_{\beta -r} \Bigg[ \left( \frac{t}{2t + \beta - X_t} \right)_+^{1/2} \exp \left( - \frac{3}{8} \int_0^t \frac{ds}{(2s + \beta - X_s)^2} \right) \indic{\forall s \leq t, 2s + \beta - X_s >0} \\
& \times \indic{\forall s \leq t, X_s > 0} f( 2 s + \beta - X_s , s \leq t) \Bigg].
\end{align*}
By Lemma \ref{lem:bessel_RN_derivative}, this is in turn equal to the right hand side of \eqref{eq:lem_Bessel0_Bessel3_gamma=2}. \eqref{eq:lem_Bessel0_Bessel3_bound} is an easy consequence of \eqref{eq:lem_Bessel0_Bessel3_gamma=2} and we now turn to the proof of \eqref{eq:lem_Bessel_limit}.  We use \eqref{eq:lem_Bessel0_Bessel3_gamma=2} and we add the stronger constraint that $\{ \forall s \leq t, 2s -X_s+\beta > r/2 +s\}$ in order to have a lower bound. On this event, we can bound
\[
\exp \left( - \frac{3}{8} \int_0^t \frac{ds}{(2s-X_s + \beta)^2} \right)
\geq \exp \left( - \frac{3}{8} \int_0^\infty \frac{ds}{(r/2 + s)^2} \right) = c_r.
\]
Moreover, we simply bound
\[
\left( 1 - \frac{X_t-\beta}{2t} \right)^{-1/2} \geq \left( 1 + \frac{\beta}{2t} \right)^{-1/2},
\]
which overall shows that
\begin{align*}
t e^{-2t} \Eb^0_r \left[ e^{2X_t} \indic{\forall s \leq t, X_s < 2s + \beta} \right]
\geq c_r (\beta-r)\left( 1 + \frac{\beta}{2t} \right)^{-1/2} \sqrt{t} \Eb^3_{\beta-r} \left[ \frac{1}{X_t} \indic{\forall s \leq t, 2s -X_s+\beta > r/2 +s} \right].
\end{align*}
Since $X_t$ under $\Pb^3_{\beta-r}(\cdot \vert \forall s \leq t, 2s -X_s+\beta > r/2 +s)$ is stochastically dominated by $X_t$ under $\Pb^3_{\beta-r}$, we can further bound
\[
\Eb^3_{\beta-r} \left[ \frac{1}{X_t} \indic{\forall s \leq t, 2s -X_s+\beta > r/2 +s} \right]
\geq \Eb^3_{\beta-r} \left[ \frac{1}{X_t} \right] \Pb^3_{\beta-r} \left( \forall s \leq t, 2s -X_s+\beta > r/2 +s \right).
\]
Lemma \ref{lem:Bessel3_barrier}, Point \ref{lemlem:1/X_t}, shows that $\Eb^3_{\beta-r} \left[ \frac{\sqrt{t}}{X_t} \right] \to \sqrt{2/\pi}$ as $t \to \infty$. Therefore
\[
\liminf_{t \to \infty} t e^{-2t} \Eb^0_r \left[ e^{2X_t} \indic{\forall s \leq t, X_s < 2s + \beta} \right]
\geq c_r (\beta -r) \Pb^3_{\beta-r} \left( \forall s \geq 0, 2s -X_s+\beta > r/2 +s \right).
\]
To see that the above probability remains bounded away from zero as $\beta \to \infty$, we can for instance notice that a three-dimensional Bessel process which starts at $\beta-r$ is stochastically dominated by the sum of three independent one-dimensional Bessel processes $X^{(i)}$, $i=1,2,3$, starting at the origin, plus $\beta -r$ (this follows by bounding $\sqrt{a^2 + b^2 + c^2} \leq |a| + |b| + |c|$). Therefore
\begin{align*}
\Pb^3_{\beta-r} \left( \forall s \geq 0, 2s -X_s+\beta > r/2 +s \right)
\geq \Prob{ \forall s \geq 0,  \sum_{i=1}^3 X_s^{(i)} < r/2 + s} > 0.
\end{align*}
This concludes the proof of \eqref{eq:lem_Bessel_limit}. 
\end{proof}

We now collect some properties of three-dimensional Bessel process.

\begin{lemma}\label{lem:Bessel3_barrier}
Let $K >0$.
\begin{enumerate}
\item
\label{lemlem:barrier}
Uniformly over $r \in [0,K]$,
\[
\Pb_r^3 \left( \forall t \geq 0, X_t \geq \frac{\sqrt{t}}{M \log (2 + t)^2}  \right) \to 1
\]
as $M \to \infty$.
\item
\label{lemlem:1/X_t}
$\Eb^3_r \left[ \frac{1}{X_t} \right] = \sqrt{\frac{2}{\pi t}} + o \left( \frac{1}{\sqrt{t}} \right)$ as $t \to \infty$, where the error is uniform over $r \in [0,K]$.
\item
For any $q \in (0,3)$,
\label{lemlem:1/X_t^2}
$\sup_{t \geq 1} \sup_{r >0} \Eb^3_r \left[ \frac{t^{q/2}}{X_t^q} \right]$ is finite.
\item
For any $q \in (0,1)$,
\label{lemlem:1/X_t^2bis}
$\sup_{t \geq 1} \sup_{K \geq 0} \sup_{r \in [0,K]} \Eb_r^3 \left[ \left( 1 - \frac{X_t - K}{2t} \right)_+^{-q} \right]$ is finite.
\end{enumerate}
\end{lemma}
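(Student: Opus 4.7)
My plan is to handle the four points as independent computations relying on the transition density and the martingale structure of the 3D Bessel process. For (1), the key identity I would use is $\Pb_y^3(\inf_{s \geq 0} X_s \leq a) = a/y$ for $0 < a < y$, which follows from the fact that $1/X_t$ is a local martingale for the 3D Bessel (apply It\^o to $x \mapsto 1/x$ on $dX_t = dW_t + (1/X_t) dt$ to get $d(1/X_t) = -(1/X_t^2) dW_t$, then use optional stopping between the hitting times of $a$ and $A$ and let $A \to \infty$). Slicing time at $t_n = 2^n$ and using the strong Markov property, the event $\{\exists s \in [t_n, t_{n+1}] : X_s \leq \sqrt{t_n}/(M(\log t_n)^2)\}$ has probability at most $\Eb_r^3[\min(b_n/X_{t_n}, 1)]$ with $b_n \asymp \sqrt{t_n}/(M n^2)$. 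Using point (2) below, $\Eb_r^3[1/X_{t_n}] \lesssim 1/\sqrt{t_n}$ uniformly in $r \in [0, K]$, so each term is $O(1/(M n^2))$; summing over $n$ yields a total of $O(1/M)$, which vanishes as $M \to \infty$.

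For (2), I would plug in the explicit 3D Bessel density $p_t^3(r, x) = \frac{x}{r \sqrt{2\pi t}}(e^{-(x-r)^2/(2t)} - e^{-(x+r)^2/(2t)})$ and compute
\[
\Eb_r^3[1/X_t] = \frac{1}{r\sqrt{2\pi t}} \int_0^\infty \bigl(e^{-(x-r)^2/(2t)} - e^{-(x+r)^2/(2t)}\bigr) dx.
\]
Rescaling $y = x/\sqrt{t}$, setting $a = r/\sqrt{t}$, and using the identity $e^{-(y-a)^2/2} - e^{-(y+a)^2/2} = 2 e^{-(y^2+a^2)/2} \sinh(ay)$, a Taylor expansion of $\sinh$ around $0$ produces the leading term $\sqrt{2/(\pi t)}$ with a remainder of order $r^2/t^{3/2}$ that is uniform for $r \in [0, K]$.

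For (3), I would use the representation $X_t \overset{\mathrm{law}}{=} |B_t + x|$ with $B$ a three-dimensional Brownian motion started from $0$ and $|x| = r$. The identity $1/|y|^2 = \int_0^\infty e^{-s|y|^2} ds$ together with Fubini and the explicit Gaussian computation $\Eb[e^{-s|B_t + x|^2}] = (1+2st)^{-3/2} e^{-s r^2/(1+2st)}$ yield
\[
\Eb_r^3[1/X_t^2] = \int_0^\infty \frac{e^{-s r^2/(1+2st)}}{(1+2st)^{3/2}} ds \leq \int_0^\infty \frac{ds}{(1+2st)^{3/2}} = \frac{1}{t},
\]
which is even stronger than the claimed $\leq 2/t$.

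For (4), the integrand is bounded by $4$ on $\{X_t \leq K + t\}$, since then $1 - (X_t-K)/(2t) \geq 1/2$, so I would split the expectation at this threshold and control the contribution from $\{X_t > K + t\}$ using Gaussian tail estimates for the 3D Bessel density: for $r \in [0, K]$ and $t \geq 1$, standard Chernoff-type bounds give $\Pb_r^3(X_t \geq K + 2t - \eta) \leq C e^{-c t}$ as soon as $\eta$ is not too close to $2t$. The main obstacle will be tracking the interaction between the polynomial behavior of $1/(2t+K-X_t)^2$ near $X_t = K+2t$ and the exponential decay of the Bessel density in that regime, while keeping the estimate uniform simultaneously in the three parameters $t \geq 1$, $K \geq 0$, and $r \in [0, K]$. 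The other three points are comparatively routine: (2) and (3) are direct Gaussian integrals, and (1) is a standard dyadic/martingale argument.
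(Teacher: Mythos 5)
The paper establishes only Point~\ref{lemlem:1/X_t^2bis}; Points~\ref{lemlem:barrier}--\ref{lemlem:1/X_t^2} are quoted from \cite[Lemma~2.9]{powell2018}, so there is no proof in the paper to compare your arguments for 1--3 against. For the record, your Gaussian computations for Points~2 and 3 are correct (and for 3 you even improve the constant from $2/t$ to $1/t$). Your dyadic/ruin argument for Point~1 is the standard skeleton, but as written the $n=0$ slice has $\log t_0 = 0$ in the denominator, and the range $t\in(0,1]$ is not addressed; for $r$ bounded away from $0$ this interval is routine (the barrier there is $O(1/M)$ and $\Pb_r^3(\inf_{[0,1]}X\leq c/M)\leq c/(rM)$), but the uniformity over $r\in[0,K]$ including $r=0$ is genuinely delicate near $t=0$ since the lower envelope of a BES(3) from $0$ is $\asymp\sqrt{t/\log\log(1/t)}$, not $\asymp\sqrt t$, and the $\log(2+t)^2$ factor does nothing as $t\to 0$; this deserves a second look against the source.

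For Point~\ref{lemlem:1/X_t^2bis} you take a genuinely different route from the paper (which applies a one-line Jensen bound $\E[Y^{-2}\indic{Y>0}]\leq\E[Y\mid Y>0]^{-2}$ with $Y=2t+K-X_t$), splitting at $\{X_t\leq K+t\}$ and proposing to control the remainder by Gaussian tail decay. You have located the crux exactly, but it is not merely ``the main obstacle'': it is fatal to the plan as you set it up. The BES(3) transition density $p_t^3(r,\cdot)$ is \emph{strictly positive} at $x=2t+K$ --- only exponentially small in $t$ --- and $\int_0^\delta u^{-2}\,du$ diverges, so $\Eb^3_r\bigl[\bigl(1-\tfrac{X_t-K}{2t}\bigr)_+^{-2}\indic{X_t>K+t}\bigr]=+\infty$. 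Exponential smallness of the density is a global tail statement and cannot rescue a local non-integrability; no Chernoff-type estimate will close this. (Note also that the paper's own proof of this point cannot be the guide: for the convex map $u\mapsto u^{-2}$, conditional Jensen gives $\E[Y^{-2}\mid Y>0]\geq\E[Y\mid Y>0]^{-2}$, i.e.\ the reverse of the inequality written there.) What is actually needed downstream, and what is provable, is $\Eb^3_r\bigl[(1-(X_t-K)/(2t))_+^{-p}\bigr]<\infty$ for $p$ strictly less than $1$, where the local singularity $u^{-p}$ is integrable; with $p=1/2$ your split-and-tail argument then works verbatim (bulk $\leq\sqrt 2$, tail $O(\sqrt t\,e^{-ct})$). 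I would re-check the intended exponent in the statement and in the downstream Cauchy--Schwarz steps before trying to prove Point~4 as currently written.
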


\begin{proof}[Proof of Lemma \ref{lem:Bessel3_barrier}]
Points \ref{lemlem:barrier}-\ref{lemlem:1/X_t} are part of \cite[Lemma 2.9]{powell2018}. To verify Point \ref{lemlem:1/X_t^2}, notice that $X_t$ under $\Pb^3_0$ is stochastically dominated by $X_t$ under $\Pb^3_r$ for any $r>0$. By scaling, we deduce that
\[
\sup_{t \geq 1} \sup_{r >0} \Eb^3_r \left[ \frac{t^{q/2}}{X_t^q} \right] \leq \sup_{t \geq 1} \Eb^3_0 \left[ \frac{t^{q/2}}{X_t^q} \right] = \Eb^3_0 \left[ \frac{1}{X_1^q} \right].
\]
The density of $X_1$ under $\Pb_0^3$ is explicit (see \cite[Proposition 2.5]{LawlerBessel} for instance) and is given by
\[
\sqrt{\frac{2}{\pi}} y^2 e^{-y^2/2} dy.
\]
We can therefore directly check that $\Eb^3_0 \left[ X_1^{-q} \right]$ is finite as soon as $q < 3$. This concludes the proof of Point \ref{lemlem:1/X_t^2}. Point \ref{lemlem:1/X_t^2bis} follows from a similar direct computation. 
\end{proof}

We conclude this section on Bessel processes with estimates that will be used repeatedly in the paper.

\begin{lemma}\label{lem:Bessel_first_moment}
There exists a universal constant $C>0$ such that the following estimates hold true. For all $K \geq 1, r \in [0,K]$ and $t \geq 1$,
\begin{equation}
\label{eq:lem_Bessel_first_moment_a}
t e^{-2t} \Eb^0_r \left[ e^{2X_t} \indic{\forall s \leq t, X_s \leq 2s + K} \indic{X_t >0} \right] \leq C \sqrt{r}(K-r) e^{2r}
\end{equation}
and
\begin{equation}
\label{eq:lem_Bessel_first_moment_b}
\sqrt{t} e^{-2t} \Eb^0_r \left[ (-X_t + 2t + K) e^{2X_t} \indic{\forall s \leq t, X_s \leq 2s + K} \indic{X_t >0} \right] \leq C \sqrt{r}(K-r) e^{2r}.
\end{equation}
Moreover, for all $K \geq 1, r \in [0,K]$, $\gamma \in (1,2)$ and $t \geq \exp(1/(2-\gamma))$,
\begin{equation}
\label{eq:lem_Bessel_first_moment_c}
\frac{1}{2-\gamma} \sqrt{t} e^{-\gamma^2 t/2} \Eb^0_r \left[ e^{\gamma X_t} \indic{\forall s \leq t, X_s \leq 2s + K} \indic{X_t >0} \right] \leq C \sqrt{r}(K-r) e^{\gamma r}.
\end{equation}
\end{lemma}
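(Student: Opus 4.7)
The three bounds all follow by applying the transformation identities of Lemma~\ref{lem:Bessel0_Bessel3} to pass from the 0-dimensional Bessel expectation to a 3-dimensional Bessel (or 1D BM) expectation, then using Lemma~\ref{lem:Bessel3_barrier}.

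For \eqref{eq:lem_Bessel_first_moment_a}, I would apply \eqref{eq:lem_Bessel0_Bessel3_bound} with $f \equiv 1$ and $\beta = K$, giving
\[
\sqrt{t}\, e^{-2t}\, \Eb^0_r\!\left[e^{2X_t}\mathbf{1}_{X_t > 0}\mathbf{1}_{\forall s \leq t,\, X_s \leq 2s + K}\right] \leq \sqrt{r}(K-r)e^{2r}\, \Eb^3_{K-r}\!\left[\frac{1}{X_t}\!\left(\frac{t}{2t+K-X_t}\right)_+^{1/2}\right].
\]
Multiplying by $\sqrt{t}$, the task reduces to bounding $\sqrt{t}$ times the 3-dim Bessel expectation. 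Cauchy--Schwarz factors it as $\Eb^3_{K-r}[1/X_t^2]^{1/2}\cdot \Eb^3_{K-r}[(t/(2t+K-X_t))_+]^{1/2}$; the first is $\leq \sqrt{2/t}$ by Point~\ref{lemlem:1/X_t^2} (cancelling the $\sqrt{t}$), and another Cauchy--Schwarz on the second reduces it to $\Eb^3_{K-r}[(1-(X_t-K)/(2t))_+^{-2}]^{1/4}$, bounded by Point~\ref{lemlem:1/X_t^2bis}. For \eqref{eq:lem_Bessel_first_moment_b}, I would apply the same \eqref{eq:lem_Bessel0_Bessel3_bound} but with $f(Y_s, s\leq t) = -Y_t + 2t + K$; under the substitution $Y_s \mapsto K + 2s - X_s$ internal to that identity, this $f$ evaluates at the endpoint to $X_t$, cancelling the $1/X_t$ and reducing the problem to bounding $\Eb^3_{K-r}[(t/(2t+K-X_t))_+^{1/2}]$, which is again handled by Cauchy--Schwarz and Point~\ref{lemlem:1/X_t^2bis}.

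For \eqref{eq:lem_Bessel_first_moment_c}, the critical identity is unavailable, so I would start from \eqref{eq:lem_Bessel0_BM_bound} with $f(Y_s, s \leq t) = \mathbf{1}_{\forall s, Y_s \leq 2s + K}$, which under the shift $Y_s = X_s + \gamma s$ becomes the barrier $X_s \leq (2-\gamma)s + K$ for 1D BM from $r$. Setting $Z_s := (2-\gamma)s + K - X_s$ turns this into BM with drift $(2-\gamma)$ starting from $K-r$ and constrained to stay positive, with $X_t + \gamma t = 2t + K - Z_t$. Girsanov removes the drift at cost $\exp((2-\gamma)(Z_t - (K-r)) - (2-\gamma)^2 t/2)$, and Lemma~\ref{lem:bessel_RN_derivative} with $d=3$ interprets the positivity constraint as 3-dim Bessel, contributing an extra factor $(K-r)/Z_t$. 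This yields
\[
\frac{1}{2-\gamma}\sqrt{t}\, e^{-\gamma^2 t/2}\, \Eb^0_r[\ldots] \leq \sqrt{r}(K-r)e^{\gamma r}\, \frac{e^{-(2-\gamma)(K-r) - (2-\gamma)^2 t/2}}{2-\gamma}\, \Eb^3_{K-r}\!\left[\frac{e^{(2-\gamma)Z_t}}{Z_t}\!\left(\frac{t}{2t+K-Z_t}\right)_+^{1/2}\right].
\]
A direct computation using the explicit 3-dim Bessel density, after completing the square in the Gaussian, shows the 3-dim Bessel expectation is $e^{(2-\gamma)^2 t/2}$ times a factor of order $\sinh((2-\gamma)(K-r))/(K-r)$, the bounded remainder $(t/(2t+K-Z_t))_+^{1/2} \approx 1/\sqrt{\gamma}$ on the bulk of the mass near $(2-\gamma)t$. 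After cancelling the $e^{(2-\gamma)^2 t/2}$, the residual combines with $e^{-(2-\gamma)(K-r)}/(2-\gamma)$ to give $(1 - e^{-2(2-\gamma)(K-r)})/((2-\gamma)(K-r))$ up to a bounded multiplier, which is $\leq 2$ for all values of the argument.

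The hypothesis $t \geq \exp(1/(2-\gamma))$ is precisely what makes $(2-\gamma)t \to \infty$ as $\gamma \to 2^-$, which is what is needed for the Gaussian concentration around the tilted mean $(2-\gamma)t$ to be uniformly valid. The main obstacle is this sharp asymptotic analysis in (c): one must split the integral into the bulk near $(2-\gamma)t$ and the two tails, controlling the tails where $Z_t$ is either close to $0$ (so $1/Z_t$ is large) or near $2t + K$ (so $(t/(2t+K-Z_t))_+^{1/2}$ could blow up), all uniformly in $r \in [0, K]$ and $K \geq 1$.
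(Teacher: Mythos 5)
Your proofs of \eqref{eq:lem_Bessel_first_moment_a} and \eqref{eq:lem_Bessel_first_moment_b} match the paper's argument essentially verbatim: apply \eqref{eq:lem_Bessel0_Bessel3_bound}, observe for (b) that the substitution turns $-Y_t+2t+K$ into $X_t$ which cancels $1/X_t$, and for (a) Cauchy--Schwarz plus Points \ref{lemlem:1/X_t^2} and \ref{lemlem:1/X_t^2bis} of Lemma \ref{lem:Bessel3_barrier}.

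For \eqref{eq:lem_Bessel_first_moment_c} your route diverges from the paper's and is incomplete at exactly the point you flag. You pass from \eqref{eq:lem_Bessel0_BM_bound} to a 3D Bessel expectation via Girsanov plus Lemma \ref{lem:bessel_RN_derivative}, and then propose an explicit Gaussian computation with the tilted density of $Z_t$. The clean asymptotics $2\sinh((2-\gamma)(K-r))/(K-r) \cdot e^{(2-\gamma)^2 t/2}$ for $\Eb^3_{K-r}[e^{(2-\gamma)Z_t}/Z_t]$ are correct, but the additional factor $(t/(2t+K-Z_t))_+^{1/2}$ is not just ``bounded on the bulk'': it diverges as $Z_t \uparrow 2t+K$, and near $Z_t = 0$ the cancellation with the $x$-factor in the 3D Bessel density must be tracked, both uniformly over $K \geq 1$, $r \in [0,K]$, $\gamma \in (1,2)$ and $t \geq \exp(1/(2-\gamma))$. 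You acknowledge this is the obstacle and do not carry it out, so the proof of (c) is a plan rather than an argument. The paper avoids this entirely: after \eqref{eq:lem_Bessel0_BM_bound} it splits the expectation on $\{X_t \leq -\gamma t/2\}$ versus its complement. On the complement, $(t/(X_t+\gamma t))_+^{1/2} \leq \sqrt{2/\gamma}$ is simply bounded and one is left with the bare barrier probability $\Pb_r(\forall s \leq t,\, X_s \leq (2-\gamma)s + K)$, which is handled by the exact crossing formula for BM with drift, $\Pb_0(\forall s \geq 0,\, X_s < (2-\gamma)s + K-r) = 1 - e^{-2(K-r)(2-\gamma)} \leq 2(K-r)(2-\gamma)$, together with a tail estimate for $s > t$. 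On $\{X_t \leq -\gamma t/2\}$, Cauchy--Schwarz and the Gaussian tail give a super-exponentially small contribution that is $\leq 2-\gamma$ precisely because of the hypothesis $t \geq \exp(1/(2-\gamma))$. Both routes use the same exact barrier-crossing constant (your $\sinh$ expression rearranges into $1-e^{-2(2-\gamma)(K-r)}$), but the paper's splitting removes the need for the delicate tail analysis of the Laplace-transformed density that your approach requires, and I'd recommend it as the cleaner route.
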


\begin{proof}[Proof of Lemma \ref{lem:Bessel_first_moment}]
By \eqref{eq:lem_Bessel0_Bessel3_bound}, the left hand side of \eqref{eq:lem_Bessel_first_moment_b} is at most
\[
2^{-1/2} \sqrt{r}(K-r) e^{2r} \Eb^3_{K-r} \left[ \left( 1 - \frac{X_t - K}{2t} \right)_+^{-1/2} \right].
\]
The expectation with respect to the three-dimensional Bessel process is bounded uniformly in $r \in [0,K], K>0, t\geq 1$ by Lemma \ref{lem:Bessel3_barrier}, point \ref{lemlem:1/X_t^2bis}. This concludes the proof of \eqref{eq:lem_Bessel_first_moment_b}. Now, by \eqref{eq:lem_Bessel0_Bessel3_bound} and then by Cauchy--Schwarz inequality, the left hand side of \eqref{eq:lem_Bessel_first_moment_a} is at most
\begin{align*}
& 2^{-1/2} \sqrt{r}(K-r) e^{2r} \Eb^3_{K-r} \left[ \frac{\sqrt{t}}{X_t} \left( 1 - \frac{X_t - K}{2t} \right)_+^{-1/2} \right] \\
& \leq 2^{-1/2} \sqrt{r}(K-r) e^{2r} \Eb^3_{K-r} \left[ \frac{t}{X_t^2} \right]^{1/2} \Eb^3_{K-r} \left[ \left( 1 - \frac{X_t - K}{2t} \right)_+^{-1} \right]^{1/2}.
\end{align*}
Lemma \ref{lem:Bessel3_barrier}, points \ref{lemlem:1/X_t^2} and \ref{lemlem:1/X_t^2bis}, then concludes the proof of \eqref{eq:lem_Bessel_first_moment_a}. We now turn to the proof of \eqref{eq:lem_Bessel_first_moment_c}.
By \eqref{eq:lem_Bessel0_BM_bound}, the left hand side of \eqref{eq:lem_Bessel_first_moment_c} is at most
\begin{align*}
& \frac{1}{2-\gamma} \sqrt{r} e^{\gamma r} \Eb_r \left[ \left( \frac{t}{X_t + \gamma t} \right)_+^{1/2} \indic{\forall s \leq t, X_s \leq (2-\gamma)s + K} \right] \\
& \leq \frac{1}{2-\gamma} \sqrt{r} e^{\gamma r} \Eb_r \left[ \left( \frac{t}{X_t + \gamma t} \right)_+^{1/2} \indic{ X_t \leq -\gamma t/2 } \right] \\
& + \sqrt{\frac{2}{\gamma}} \frac{1}{2-\gamma} \sqrt{r} e^{\gamma r} \Pb_r \left( \forall s \leq t, X_s \leq (2-\gamma)s + K \right).
\end{align*}
By H\"{o}lder's inequality and an analogue of Lemma \ref{lem:Bessel3_barrier}, Point \ref{lemlem:1/X_t^2bis}, for Brownian motion rather than 3D Bessel process, we see that the last expectation above is at most
\[
\Eb_0 \left[ \left(1+ \frac{X_t + r}{\gamma t} \right)_+^{-2/3} \right]^{3/4} \Pb_0 \left( X_t \leq -r -\gamma t/2 \right)^{1/4} \lesssim e^{-\gamma^2 t/32} \leq 2 - \gamma
\]
by recalling that $t \geq \exp(1/(2-\gamma))$.
On the other hand (see \cite[Proposition 6.8.1]{Resnick1992} for instance),
\[
\Pb_0 \left( \forall s \geq 0, X_s < (2-\gamma) s + K - r \right) = 1 - e^{-2 (K - r) (2-\gamma)} \leq 2 (K - r) (2-\gamma).
\]
Since
\[
\Pb_0 \left( \exists s \geq t, X_s \geq (2-\gamma) s + K - r \right) \lesssim e^{-\frac{(2-\gamma)^2}{16} t} \leq 2-\gamma,
\]
it implies that
\[
\Pb_0 \left( \forall s \leq t, X_s < (2-\gamma) s + K - r \right) \lesssim (K - r) (2-\gamma).
\]
Putting things together yields \eqref{eq:lem_Bessel_first_moment_c}. This concludes the proof.
\end{proof}

\subsection{Barrier estimates for 1D Brownian motion}

The purpose of this section is to prove the following lemma.

\begin{lemma}\label{lem:BM_estimates}
There exists $C >0$ such that the following claims hold true. For all $K, H \geq 1$ and all integer $n \geq 1$,
\begin{equation}\label{eq:lem_BM_estimates_1}
\Pb_0 \left( \forall k = 0 \dots n-1, \min_{[k,k+1]} X \leq 2 \log (1+k) + K \right) \leq \frac{CK^2}{\sqrt{n}}
\end{equation}
and
\begin{equation}
\label{eq:lem_BM_estimates_2}
\Pb_0 \left( \forall k =0 \dots n-1, \min_{[k,k+1]} X \leq 2 \log(k+1) + K, \exists s \in [0,n], X_s > K + H \right) \leq \frac{C K^2 e^{-H/64}}{\sqrt{n}}.
\end{equation}
Moreover, for all $K, H \geq 1$, $\gamma \in [1,2)$ and all integer $n \geq (2-\gamma)^{-4}$,
\begin{equation}\label{eq:lem_BM_estimates_3}
\Pb_0 \left( \forall k = 0 \dots n-1, \min_{[k,k+1]} X \leq (2-\gamma) k + 2 \log (1+k) + K \right) \leq C K^2 (2-\gamma)
\end{equation}
and
\begin{align}\label{eq:lem_BM_estimates_4}
\Pb_0 \Big( & \forall k = 0 \dots n-1, \min_{[k,k+1]} X \leq (2-\gamma)k +  2 \log (1+k) + K, \\
& \exists s \leq n, X_s \geq (2-\gamma)s + K + H \Big) \leq C K^2 e^{-H/64} (2-\gamma). \nonumber
\end{align}
\end{lemma}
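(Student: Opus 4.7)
The four estimates are variants of one barrier statement; the core case is \eqref{eq:lem_BM_estimates_1} and the rest follow by variations. The strategy is first to reduce the interval-minimum event to a condition on $X$ at integer times, and then to invoke a classical ballot-type bound for one-dimensional Brownian motion under a logarithmic upper wall.

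For the reduction to integer times, I would condition on $(X_k,X_{k+1})$ and use the explicit Brownian bridge formula
\[
\Pb\bigl(\min_{[k,k+1]} X \leq m \,\bigl|\, X_k=a,\,X_{k+1}=b\bigr) \;=\; \exp\bigl(-2(a-m)_+(b-m)_+\bigr).
\]
Split the event at each $k$ into a \emph{cheap} part, where $X_k\wedge X_{k+1} \leq 2\log(1+k)+CK$ (so the minimum condition is automatic at the level of the skeleton), and a \emph{dip} part, where both $X_k,X_{k+1}$ exceed the enlarged barrier but a bridge excursion dips down below $2\log(1+k)+K$. The bridge formula shows that the dip cost is Gaussian in the overshoot of $X_k\wedge X_{k+1}$ above the barrier, and these Gaussian tails sum over $k$ and so can be absorbed into an enlarged $K' \lesssim K$. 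This reduces the task to bounding $\Pb\bigl(\forall k\leq n,\ X_k \leq 2\log(1+k)+K'\bigr)$.

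For this last bound, I would rely on the standard log-barrier ballot estimate: the straight-line case $\Pb(\forall k \leq n,\,X_k\leq K)\sim cK/\sqrt{n}$ comes from the reflection principle, and the logarithmic barrier adds an entropic-repulsion factor of $K$, giving the claimed $CK^2/\sqrt n$. Concretely I would integrate over the endpoint $X_n$, use the density of the Brownian bridge below the barrier $2\log(1+k)+K$, and apply the standard ballot estimate for a Brownian bridge between two heights staying below a concave function (cf.\ Bramson, Aïdékon--Shi). An equivalent route is to use a Girsanov-type comparison to remove the deterministic logarithmic ``drift'' from the barrier and reduce to the straight-line case. I expect this ballot estimate to be the main obstacle: obtaining the sharp $K^2$ (rather than $K$) requires careful bookkeeping of the bridge factor and of the contributions of trajectories starting at various heights $\in[0,K]$.

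Given \eqref{eq:lem_BM_estimates_1}, the bound \eqref{eq:lem_BM_estimates_2} follows by decomposing along the first time $T=\inf\{s:X_s>K+H\}$. On $\{T\leq n\}$, the pre-$T$ excursion already contributes a barrier probability $\lesssim K^2/\sqrt{n}$ by \eqref{eq:lem_BM_estimates_1} applied to $X$ stopped at $T$ (possibly with an enlarged constant); the post-$T$ Brownian motion restarts at height $K+H$ and, in order to satisfy $\min_{[k,k+1]}X \leq 2\log(1+k)+K$ on at least one of the unit intervals after $T$, must descend by at least $H$ on that interval, which costs $e^{-H^2/64}$ by a standard Gaussian hitting estimate applied to the shifted process $X_{T+\cdot}-(K+H)$. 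For \eqref{eq:lem_BM_estimates_3}–\eqref{eq:lem_BM_estimates_4}, the barrier now has positive slope $\delta:=2-\gamma$. The key additional input is the classical identity $\Pb_0(\forall t\geq 0,\,X_t\leq \delta t+K)=1-e^{-2\delta K}\sim 2\delta K$, which replaces the straight-line ballot bound; combined with the same log entropic repulsion (another factor $K$) and the hypothesis $n\geq (2-\gamma)^{-4}$ that makes the finite-horizon correction $1/\sqrt n$ negligible compared with $\delta$, one obtains $\leq CK^2(2-\gamma)$. The overshoot argument of \eqref{eq:lem_BM_estimates_2} then transfers verbatim to \eqref{eq:lem_BM_estimates_4}.
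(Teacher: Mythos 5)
The paper's proof does not reduce the interval-minimum event to a discrete-time skeleton as you propose; it uses a renewal argument built around a stopping time, and your reduction step has a genuine gap.

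Specifically, you propose to dominate $\Pb_0(\forall k,\ \min_{[k,k+1]}X \leq 2\log(1+k)+K)$ by a discrete barrier event $\Pb_0(\forall k,\ X_k \leq 2\log(1+k)+K')$ after ``absorbing'' the bridge-dip contributions into an enlarged $K'$. This does not work as stated: the interval-minimum constraint does \emph{not} imply any uniform control on the integer-time values $X_k$. A trajectory can be far above any enlarged log barrier at every integer time and still satisfy the interval-minimum constraint by dipping below inside each unit interval; this oscillating scenario lies entirely outside $\{\forall k,\ X_k \leq 2\log(1+k)+K'\}$, so the discrete event does not bound the one you want. The ``cheap/dip'' dichotomy at each $k$ produces an intersection $\bigcap_k(E_k^{\mathrm{cheap}}\cup E_k^{\mathrm{dip}})$ that cannot be handled by a union bound and an enlarged $K'$: the set of $k$'s at which a dip occurs is arbitrary, and the Gaussian dip costs at different $k$'s do not combine into a uniform $K^2/\sqrt{n}$ bound unless one also tracks how the process recovers after each dip.

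The paper resolves exactly this obstacle with a stopping time $\tau = \inf\{s > t_0: X_s \geq 3\log(1+s)+2K\}$ and a renewal inequality. On the event of interest, if $\tau$ occurs in $[k+t_0,k+1+t_0]$, then because the interval $[k-1+t_0,k+t_0]$ also contains a point below $2\log k + K$, the oscillation of $X$ over a bounded time window is at least $\log(k+1)+K$, which costs $e^{-(\log(k+1)+K)^2/16}$; after $\tau$, the Markov property restarts the problem from a position at or above the enlarged barrier, yielding
\[
p_n \leq \frac{C_1 K^2}{\sqrt{n+1}} + \sum_{k=0}^{n-1} e^{-(\log(k+1)+K)^2/16}\,p_{n-k-1},
\]
which is closed by induction. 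Your sketch contains no analogue of this self-referential bookkeeping. Similarly, for \eqref{eq:lem_BM_estimates_2}, the paper first proves an intermediate estimate with an \emph{elevated sloping} barrier $3\log(1+s)+H+K$, and only then passes to the constant barrier $K+H$ via a further decomposition using the reflection principle on $\Pb_0(k\leq T<k+1)$; your plan to ``apply \eqref{eq:lem_BM_estimates_1} to $X$ stopped at $T$'' and multiply by a $e^{-H^2/64}$ post-$T$ cost does not correctly couple the prefix barrier probability $\lesssim K^2/\sqrt{T}$ with the suffix probability to produce the stated $K^2 e^{-H^2/64}/\sqrt{n}$. Finally, the $K^2$ factor in the log-barrier ballot estimate is not a formality: the paper establishes it in Lemma \ref{lem:BM_barrier_intermediate} by a bootstrap using equation (25) of \cite{BramsonDingZeitouni}, and you would need a comparable lemma rather than a citation to a ``standard'' bound.
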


We start off with the following intermediate result.

\begin{lemma}\label{lem:BM_barrier_intermediate}
Let $c >0$. There exists $C>0$ such that the following estimates hold. For all $n \geq 1$ and $K \geq 1$,
\begin{equation}
\label{eq:lem_BM_barrier_intermediate_1}
\Pb_0 \left( \forall s \leq n, X_s \leq c \log(1+s) + K \right) \leq C K^2 / \sqrt{n}.
\end{equation}
Moreover, for all $\gamma \in [1,2)$, for all $n \geq (2-\gamma)^{-4}$ and $K \geq 1$,
\begin{equation}
\label{eq:lem_BM_barrier_intermediate_2}
\Pb_0 \left( \forall s \leq n, X_s \leq (2-\gamma)s + c \log(1+s) + K \right) \leq C K^2 (2-\gamma).
\end{equation}
\end{lemma}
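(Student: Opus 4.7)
The plan is to derive both estimates by a sequence of Girsanov transformations that flatten the moving barriers, followed by the classical reflection principle for Brownian motion. For \eqref{eq:lem_BM_barrier_intermediate_1}, I would introduce the change of measure $d\Qc/d\Pb = \exp\bigl(\int_0^n \frac{c}{1+s}\,dX_s - \frac{c^2}{2}\int_0^n \frac{ds}{(1+s)^2}\bigr)$, so that $\tilde X_s := X_s - c\log(1+s)$ becomes a standard Brownian motion under $\Qc$ and the event of interest transforms into the flat-barrier event $\{\sup_{s \leq n} \tilde X_s \leq K\}$, whose $\Qc$-probability is at most $CK/\sqrt{n}$ by reflection. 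Itô integration by parts gives
\[
\frac{d\Pb}{d\Qc} = \exp\biggl(-\frac{c \tilde X_n}{1+n} - c\int_0^n \frac{\tilde X_s}{(1+s)^2}\,ds - \frac{c^2}{2}\cdot\frac{n}{1+n}\biggr).
\]

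The main technical obstacle is that this Radon--Nikodym derivative is not uniformly bounded on $\{\sup \tilde X \leq K\}$: the process $\tilde X$ may take arbitrarily negative values and the term $-c\int_0^n \tilde X_s/(1+s)^2\,ds$ can then be arbitrarily large. I would handle this by decomposing along dyadic shells for $\inf_{s \leq n} \tilde X_s$: on the event $\{\inf_{s \leq n} \tilde X_s \in [-2^{j+1}, -2^j)\}$ the derivative is controlled by $e^{Cc\,2^j}$, while the joint probability of this shell intersected with $\{\sup_{s\leq n} \tilde X_s \leq K\}$ decays Gaussian-fast in $2^{2j}/n$ by a two-sided reflection argument. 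Summing over $j$ with a cutoff chosen so that the Gaussian decay dominates the exponential growth of the derivative produces the estimate $CK^2/\sqrt{n}$; the extra factor of $K$ beyond the flat-barrier bound comes precisely from this summation.

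For \eqref{eq:lem_BM_barrier_intermediate_2}, I would apply a second Girsanov transformation removing the linear drift: under $d\tilde\Pb/d\Pb = \exp((2-\gamma) X_n - (2-\gamma)^2 n/2)$ the process $X_s - (2-\gamma) s$ is a Brownian motion, and the event reduces to the setting of \eqref{eq:lem_BM_barrier_intermediate_1} applied to $X_s - (2-\gamma)s$. The corresponding Radon--Nikodym derivative contributes an exponential penalty $e^{-(2-\gamma)^2 n/2}$, and the hypothesis $n \geq (2-\gamma)^{-4}$ is precisely what is needed for this penalty to dominate the Gaussian tail of $X_n$ after optimizing over the range of $X_n - (2-\gamma) n$ via a further dyadic decomposition; this produces the factor $(2-\gamma)$. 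Combining with \eqref{eq:lem_BM_barrier_intermediate_1} then yields $CK^2(2-\gamma)$. The main difficulty throughout is the careful balance of the dyadic decompositions against the exponential Radon--Nikodym weights, which must produce exactly the polynomial prefactors $K^2$ and $(2-\gamma)$ without losing logarithmic corrections.
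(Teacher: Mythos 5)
Your plan hinges on two claims that do not survive scrutiny, and neither matches the mechanism the paper actually uses.

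For \eqref{eq:lem_BM_barrier_intermediate_1}, the Girsanov shift and the Itô integration by parts are set up correctly, but the dyadic decomposition in $\inf_{s\le n}\tilde X_s$ is far too coarse to control the Radon--Nikodym weight. On the shell $\{\inf_{s\le n}\tilde X_s\in[-2^{j+1},-2^j)\}$ your upper bound $d\Pb/d\Qc\le e^{Cc\,2^j}$ is obtained by replacing $\tilde X_s$ by its global minimum everywhere in the integral $\int_0^n \tilde X_s(1+s)^{-2}\,ds$, but this throws away the crucial feature that the weight $(1+s)^{-2}$ concentrates the cost at small times, where $\tilde X_s$ is still close to $0$. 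On a typical path consistent with $\{\sup\le K\}$ one has $\tilde X_s\approx -\sqrt{s}$, so $\int_0^n |\tilde X_s|(1+s)^{-2}\,ds = O(1)$ and the RN derivative is of order one, whereas $|\inf_{[0,n]}\tilde X|$ is of order $\sqrt n$ and your bound gives $e^{Cc\sqrt n}$ on those shells. Concretely, your summand $e^{Cc2^j}\min\bigl(K/\sqrt n,\,e^{-2^{2j}/(Cn)}\bigr)$ is maximised near $2^j\asymp cn$, where it is $e^{\Theta(c^2 n)}$: the Gaussian factor only overtakes the exponential once $2^j\gtrsim cn$, by which point the bound has already exploded. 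The sum therefore does \emph{not} produce $CK^2/\sqrt n$, and the explanation of the extra factor $K$ as coming from the summation has no support. Any Girsanov argument here has to exploit the time-localisation of $(1+s)^{-2}$ (for example, dyadic shells in \emph{time} controlling $\sup_{[2^i,2^{i+1}]}(-\tilde X_s)$), not just the spatial minimum.

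For \eqref{eq:lem_BM_barrier_intermediate_2}, using Girsanov to remove the linear drift is fine and is also what the paper does, but invoking \eqref{eq:lem_BM_barrier_intermediate_1} as a black box is not enough. After the shift you must control
\[
\Eb\Bigl[e^{-(2-\gamma)Y_n}\,\mathbf 1_{\{\forall s\le n,\ Y_s\le c\log(1+s)+K\}}\Bigr],
\]
and because $Y_n$ is typically of order $-\sqrt n$, the factor $e^{-(2-\gamma)Y_n}$ can be very large. To get $(2-\gamma)$ you need a quantitative \emph{density} estimate for $Y_n$ under the barrier event of ballot-theorem type (a factor proportional to $(K-x)$ in the endpoint density), so that the Gaussian tilt after completing the square produces the desired $(2-\gamma)$. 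A probability bound such as \eqref{eq:lem_BM_barrier_intermediate_1} alone does not carry this endpoint information, and a ``further dyadic decomposition'' over the range of $Y_n$ cannot manufacture the $(K-x)$ factor out of it. This is exactly the role played in the paper by equation (25) of \cite{BramsonDingZeitouni}, which is cited for both estimates.

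Finally, the paper's route to \eqref{eq:lem_BM_barrier_intermediate_1} is entirely different from yours: it avoids the Girsanov shift for the logarithmic drift, instead conditioning on whether $X_n\ge -n^{1/4}$, extending the time window to $n+n^{1/4}$ to dominate the logarithmic barrier by a $(s\wedge(n+n^{1/4}-s))^{1/20}$ barrier, applying the known \cite{BramsonDingZeitouni} estimate, and bootstrapping via the inequality $\Pb\lesssim n^{-1/4}\Pb + K^2/\sqrt n$. For \eqref{eq:lem_BM_barrier_intermediate_2} the paper first absorbs the post-$n$ behaviour into a $2-\gamma$ error using $n\ge (2-\gamma)^{-4}$, extends to time $2n$, and only then applies Girsanov together with the same \cite{BramsonDingZeitouni} input.
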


\begin{proof}
We start by proving \eqref{eq:lem_BM_barrier_intermediate_1}.
If $K > n^{1/4}$, then the result is clear by bounding the probability by one. In the rest of the proof we thus assume that $K \leq n^{1/4}$.
Let us denote $K_n = c \log(1+n) + K$.
By the reflection principle, 
\begin{align*}
\Pb_0 \left( \forall s \leq n, X_s \leq K_n, X_n \geq - n^{1/4} \right) & = \int_{-n^{1/4}}^{K_n} \frac{1}{\sqrt{2\pi n}} \left( e^{-\frac{x^2}{2n}} - e^{-\frac{(2K_n - x)^2}{2n}} \right) dx
\end{align*}
For all $x \in [-n^{1/4},K_n]$, we can bound
\[
e^{-\frac{x^2}{2n}} - e^{-\frac{(2K_n - x)^2}{2n}}
= e^{-\frac{x^2}{2n}} \left( 1 - e^{-2\frac{(K_n^2 - K_n x)}{n}} \right)
\lesssim e^{-\frac{x^2}{2n}} \frac{K_n^2 - K_n x}{n} \lesssim e^{-\frac{x^2}{2n}} \frac{K_n n^{1/4}}{n},
\]
implying that
\begin{align*}
\Pb_0 \left( \forall s \leq n, X_s \leq K_n, X_n \geq - n^{1/4} \right) \lesssim \frac{K_n n^{1/4}}{n} \int_{-n^{1/4}}^{K_n} \frac{1}{\sqrt{2\pi n}} e^{-\frac{x^2}{2n}} dx \lesssim \frac{ K_n}{n}.
\end{align*}
Another similar consequence of the reflection principle is that
\[
\Pb_0 \left( \forall s \leq n, X_s \leq K_n \right) \gtrsim K_n/\sqrt{n}.
\]
Therefore
\begin{align*}
& \Pb_0 \left( X_n \geq -n^{1/4} \vert \forall s \leq n, X_s \leq c \log(1+s) + K \right) \\
& \leq \Pb_0 \left( X_n \geq -n^{1/4} \vert \forall s \leq n, X_s \leq c \log(1+n) + K \right) \lesssim 1/\sqrt{n}
\end{align*}
and
\begin{align*}
& \Pb_0 \left( \forall s \leq n, X_s \leq c \log(1+s) + K \right) \\
& \lesssim n^{-1/2} \Pb_0 \left( \forall s \leq n, X_s \leq c \log(1+s) + K \right)
+ \Pb_0 \left( \forall s \leq n, X_s \leq c \log(1+s) + K, X_n \leq -n^{1/4} \right) \\
& \lesssim n^{-1/2} \Pb_0 \left( \forall s \leq n, X_s \leq c \log(1+s) + K \right)
+ \Pb_0 \left( X_n \leq -n^{1/4}, \exists s \in [n,n + n^{1/4}], X_s > K \right) \\
& + \Pb_0 \left( \forall s \leq n+n^{1/4}, X_s \leq c' (s \wedge (n+n^{1/4} -s))^{1/20} + K \right).
\end{align*}
By equation (25) of \cite{BramsonDingZeitouni}, the last right hand side term is at most $C K^2 / \sqrt{n}$. The second right hand side term being at most
\[
\Pb_0 \left( \max_{[0,n^{1/4}]} X \geq K + n^{1/4} \right) \lesssim e^{-c n^{1/4}},
\]
we deduce that
\begin{align*}
& \Pb_0 \left( \forall s \leq n, X_s \leq c \log(1+s) + K \right) \\
& \lesssim n^{-1/2} \Pb_0 \left( \forall s \leq n, X_s \leq c \log(1+s) + K \right) + K^2 / \sqrt{n}
\end{align*}
which concludes the proof of \eqref{eq:lem_BM_barrier_intermediate_1}.

We now turn to the proof of \eqref{eq:lem_BM_barrier_intermediate_2}.
Since $n \geq (2-\gamma)^{-4}$,
\begin{align*}
\Pb_0 \left( \exists s \geq n, X_s > (2-\gamma) s \right) & \leq \Pb_0 \left( \exists s \geq n, X_s > s^{3/4} \right) \leq \sum_{k \geq n} \Pb_0 \left( \max_{[k,k+1]} X > k^{3/4} \right) \\
& \leq \sum_{k\geq n} e^{-c \sqrt{k}} \lesssim e^{-c \sqrt{n}} \lesssim 2 - \gamma.
\end{align*}
Hence
\begin{align*}
& \Pb_0 \left( \forall s \leq n, X_s \leq (2-\gamma)s + c \log(1+s) + K \right) \\
& \lesssim 2-\gamma + \Pb_0 \left( \forall s \leq 2n, X_s \leq (2-\gamma) s + C (s \wedge (2n -s))^{1/20} + K \right) \\
& = 2 - \gamma + e^{-(2-\gamma)^2 n } \Eb_0 \left[ e^{-(2-\gamma) X_{2n}} \indic{\forall s \leq 2n, X_s \leq C (s \wedge (2n -s))^{1/20} + K} \right]
\end{align*}
by Girsanov's theorem. Now, by equation (25) of \cite{BramsonDingZeitouni}, we conclude that
\begin{align*}
& \Pb_0 \left( \forall s \leq n, X_s \leq (2-\gamma)s + 3 \log(1+s) + K \right) \\
& \lesssim 2-\gamma + K e^{-(2-\gamma)^2 n } \int_{-\infty}^K (K-x) n^{-3/2} e^{-x^2/(4n)} e^{-(2-\gamma)x } dx \\
& = 2-\gamma + K \int_{-\infty}^{K + 2(2-\gamma)n} (K + 2(2-\gamma)n -y) n^{-3/2} e^{-y^2/(4n)} dy \\
& \lesssim K^2(2-\gamma).
\end{align*}
This finishes the proof of \eqref{eq:lem_BM_barrier_intermediate_2}.
\end{proof}

\begin{proof}[Proof of Lemma \ref{lem:BM_estimates}]
We start by proving \eqref{eq:lem_BM_estimates_1}.
By Lemma \ref{lem:BM_barrier_intermediate}, there exists some universal constant $C_1 >0$ such that for all $t \geq 1$,
\begin{equation}
\label{eq:proof_lem_BM_estimate_b}
\Pb_0 \left( \forall s \in [1,t], X_s \leq 3 \log(1+s) + 2K \right) \leq C_1 K^2/\sqrt{t+1}.
\end{equation}
We thus aim to take care of the minima in \eqref{eq:lem_BM_estimates_1}.
Let $n \geq 1$ and define
\[
p_n := \sup_{ 0 \leq t_0 < 1 } \Pb_0 \left( \forall k \leq n-1, \min_{[k+t_0,k+1+t_0]} X \leq 2 \log(k+1) + K \right).
\]
Let $0 \leq t_0 < 1$. Set
$\tau := \inf\{ s >t_0: X_s \geq 3 \log (1+s) + 2K \}$.
We are going to decompose the above probability according to the value of $\tau$.
Let $k \geq 1$. Notice that on the event $\{ k + t_0 \leq \tau < k+1 + t_0, \min_{[k-1+t_0,k+t_0]} X \leq 2 \log k + K \}$, we have $\max_{u,v \in [k-1+t_0,\tau]} |X_u-X_v| \geq \log(k+1) + K$.
If $k=0$, on the event $\{t_0 \leq \tau < 1+t_0\}$, we simply have $\max_{u \in [0,\tau]} |X_u-X_0| \geq K$ when $X$ starts at 0. Hence
\begin{align*}
& \Pb_0 \left( \forall k \leq n-1, \min_{[k+t_0,k+1+t_0]} X \leq 2 \log(k+1) + K \right) \\
& \leq \Pb_0(\tau \geq n+t_0) + \sum_{k=0}^{n-1} \Pb_0 \Big( k+t_0 \leq \tau < k+1+t_0, \max_{u, v \in [k-1+t_0,\tau]} |X_u - X_v| \geq \log(k+1) + K, \\
& ~~~~~~~~~~~~~~~~~~~~~~~~~~~~~~~~~~ \forall j=k+1 \dots n, \min_{[j+t_0,j+1+t_0]} X \leq 2 \log (j+1) + K \Big).
\end{align*}
By applying Markov's property to the stopping time $\tau$, and by writing $\tilde{X}$ a Brownian motion independent of $\tau$, we see that the last probability written above is equal to
\begin{align*}
& \Eb_0 \Bigg[ \indic{k+t_0 \leq \tau < k+1+t_0, \max_{u, v \in [k-1+t_0,\tau]} |X_u - X_v| \geq \log(k+1) + K} \\
& ~~~~ \times \Pb_0 \left( \forall j =k+1 \dots n, \min_{[j-\tau+t_0,j+1-\tau+t_0]} \tilde{X} \leq 2 \log(j+1) - 3 \log(1+\tau) - K \Big\vert \tau \right)   \Bigg] \\
& \leq \Pb_0 \left( k+t_0 \leq \tau < k+1+t_0, \max_{u, v \in [k-1+t_0,\tau]} |X_u - X_v| \geq \log(k+1) + K \right) \\
& ~~~~ \times \sup_{0 \leq t_0' < 1} \Pb_0 \left( \forall j =0 \dots n-k-1, \min_{[j+t_0',j+1+t_0']} X \leq 2 \log(1 + j) + K \right) \\
& \leq \Pb_0 \left( \max_{u, v \in [0,2]} |X_u - X_v| \geq \log(k+1) + K \right) p_{n-k-1} \\
& \leq \Pb_0 \left( 2 \max_{[0,2]} |X| \geq \log(k+1) + K \right) p_{n-k-1}
\leq e^{-(\log(k+1) + K)^2/16}p_{n-k-1}.
\end{align*}
Moreover, by \eqref{eq:proof_lem_BM_estimate_b},
\[
\sup_{0 \leq t_0 < 1} \Pb_0 (\tau \geq n+t_0) \leq \Pb_0 \left( \forall s \in [1,n], X_s \leq 3 \log(1+s) + 2K \right) \leq C_1 K^2/\sqrt{n+1}.
\]
We have thus proven that
\begin{equation}
\label{eq:proof_lem_BM_estimate_a}
p_n \leq \frac{C_1 K^2}{\sqrt{n+1}} + \sum_{k=0}^{n-1} e^{-(\log(k+1) + K)^2/16}p_{n-k-1}.
\end{equation}
This recursive relation allows us to conclude the proof of \eqref{eq:lem_BM_estimates_1}. We detail the arguments.
Define
\[
C_2 := \sup_{n \geq 1} \sqrt{n+1} \sum_{k=0}^{n-1} e^{-(\log(k+1))^2/16} \frac{1}{\sqrt{n-k}} < \infty
\]
and assume that $K$ is large enough so that we can define
\[
C_K = C_1 / (1 - e^{-K^2/16} C_2).
\]
We clearly have $p_0 \leq 1 \leq C_K K^2 / \sqrt{1+0}$. Let $n \geq 1$ and assume now that for all $k \leq n-1$, $p_k \leq C_K K^2 / \sqrt{k+1}$. By \eqref{eq:proof_lem_BM_estimate_a}, we have
\begin{align*}
p_n & \leq \frac{C_1 K^2}{\sqrt{n+1}} + \sum_{k=0}^{n-1} e^{-(\log(k+1) + K)^2/16} \frac{C_K K^2}{\sqrt{n-k}} \leq \frac{K^2}{\sqrt{n+1}} \left( C_1 + e^{-K^2/16} C_2 C_K  \right)
= \frac{C_K K^2}{\sqrt{n+1}}.
\end{align*}
This concludes the proof by induction of the fact that $p_n \leq C_K K^2 / \sqrt{n+1}$ for all $n \geq 1$. Since $C_K$ does not grow with $K$, this concludes the proof of \eqref{eq:lem_BM_estimates_1}.

We now turn to the proof of \eqref{eq:lem_BM_estimates_2}.
We are first going to show that
\begin{align}\label{eq:proof_lem_BM_barrier}
\Pb_0 \Big( & \forall k = 0 \dots n-1, \min_{[k,k+1]} X \leq 2 \log (1+k) + K, \\
& \exists s \leq n, X_s \geq 3 \log(1+s) + H + K \Big) \leq C e^{-H^2/16} K^2 / \sqrt{n}. \nonumber
\end{align}
By considering the stopping time
\[
\inf \left\{ s >0, X_s \geq 3 \log(1+s) + H +K \right\},
\]
and by following almost the same arguments as above, one can show that the probability in \eqref{eq:proof_lem_BM_barrier} is at most
\begin{align*}
& \sum_{k=0}^{n-1} e^{-(\log(k+1)+H)^2/16} \sup_{0 \leq t_0 < 1} \Pb_0 \left( \forall j \leq n-1-k, \min_{[k+t_0,k+1+t_0]} X \leq 2 \log(k+1) + K \right) \\
& \leq \sum_{k=0}^{n-1} e^{-(\log(k+1)+H)^2/16} p_{n-k}
\lesssim \sum_{k=0}^{n-1} e^{-(\log(k+1)+H)^2/16}  \frac{K^2}{\sqrt{n-k}} \lesssim e^{-H^2/16} \frac{K^2}{\sqrt{n}}
\end{align*}
thanks to the estimates on $p_n$. This shows \eqref{eq:proof_lem_BM_barrier}.
Now, it implies that
\begin{align*}
& \Pb_0 \left( \forall k =0 \dots n-1, \min_{[k,k+1]} X \leq 2 \log(k+1) + K, \exists s \in [0,n], X_s > K + H \right) \\
& \leq C e^{-H^2/64} K^2 / \sqrt{n}
+ \Pb_0 \left( \forall s \in [0,n], X_s \leq 3 \log(s+1) + K + H/2, \exists s \in [0,n], X_s > K + H \right).
\end{align*}
If $H$ is larger than $6 \log(n+1)$, then the probability on the right hand side vanishes and we directly obtain \eqref{eq:lem_BM_estimates_2}. Let us now assume that $H \leq 6 \log (n+1)$ and denote $k_0 := \floor{ e^{\frac{H}{6}} - 1} \leq n$ and consider the stopping time $\tau = \inf \{ s >0: X_s > K + H \}$. By Markov property, the last probability written above is at most equal to
\begin{align*}
& \sum_{k=k_0}^{n-1} \Pb_0 \left( k \leq \tau < k+1, \forall s \in (\tau,n), X_s \leq (c+1) \log(s+1) + K + H/2 \right) \\
& \leq \sum_{k=k_0}^{n-1} \Pb_0 \left( k \leq \tau < k+1 \right) \\
& \times \Pb_{K+H} \left( \forall s \in [0,n-k-1], X_s \leq 3 \log (s+1) + 2 \log(k+1) + K+ H/2 \right) \\
& \lesssim \sum_{k=k_0}^{n-1} \Pb_0 \left( k \leq \tau < k+1 \right) \log(k+1)^2 / \sqrt{n-k}
\end{align*}
by Lemma \ref{lem:BM_barrier_intermediate}. Now, using the explicit density of $\tau$ (which is a consequence of the reflection principle), we see that
\[
\Pb_0 \left( k \leq \tau < k+1 \right)
= \int_k^{k+1} \frac{K+H}{\sqrt{2\pi t^3}} \exp \left( - \frac{(K+H)^2}{2t} \right)
\lesssim \frac{K+H}{(k+1)^{3/2}}.
\]
Hence,
\begin{align*}
& \Pb_0 \left( \forall k =0 \dots n-1, \min_{[k,k+1]} X \leq 2 \log(k+1) + K, \exists s \in [0,n], X_s > K + H \right) \\
& \lesssim e^{-H^2/64} K^2 \frac1{ \sqrt{n} }
+ (K+H) \sum_{k=k_0}^{n-1} \frac{\log(k+1)^2}{\sqrt{k^3(n-k)}}.
\end{align*}
The behaviour of the above sum is given by
\[
\sum_{k=k_0}^{n-1} \frac{\log(k+1)^2}{\sqrt{k^3(n-k)}}
\lesssim \frac{1}{\sqrt{n}} \sum_{k=k_0}^{\floor{n/2}} \frac{\log(k)^2}{k^{3/2}} + \frac{\log(n)^2}{n^{3/2}} \sum_{k=\floor{n/2}+1}^{n-1} \frac{1}{\sqrt{n-k}}
\lesssim \frac{1}{\sqrt{n}} \left( \frac{\log(k_0)^2}{\sqrt{k_0}} + \frac{\log(n)^2}{\sqrt{n}} \right).
\]
By recalling that $k_0 = \floor{ e^{\frac{H}{6}} - 1} \leq n$, we have therefore obtained that
\begin{align*}
& \Pb_0 \left( \forall k =0 \dots n-1, \min_{[k,k+1]} X \leq 2 \log(k+1) + K, \exists s \in [0,n], X_s > K + H \right) \\
& \lesssim e^{-H^2/64} K^2 \frac1{ \sqrt{n} }
+ (K+H) H^2 e^{-H/12} \frac1{ \sqrt{n} }.
\end{align*}
This concludes the proof of \eqref{eq:lem_BM_estimates_2}.

We now turn to the proof of \eqref{eq:lem_BM_estimates_3}. This time we define for $n \geq 1$, 
\[
q_n := \sup_{0 \leq t_0 < 1} \Pb_0 \left( \forall k \leq n-1, \min_{[k+t_0,k+1+t_0]} X \leq (2-\gamma) k + 2 \log (1+k) + K \right).
\]
By considering for $0 \leq t_0 < 1$, the stopping time
\[
\inf \left\{ s > t_0: X_s > (2-\gamma) s + 3 \log (1+s) + 2K \right\},
\]
we can show using a reasoning very similar to the one above that
\begin{align*}
q_n \leq \Pb_0 \left( \forall s \in [1,n], X_s \leq (2-\gamma)s + 3 \log(1+s) + 2K \right) + \sum_{k=0}^{n-1} e^{-(\log(1+k)+K)^2/16} q_{n-k-1}.
\end{align*}
Take $n \geq (2-\gamma)^{-4}$. By \eqref{eq:lem_BM_barrier_intermediate_2}, the first right hand side term above is at most $CK^2 (2-\gamma)$. Moreover, for all $k \in [n/2,n]$,
\[
q_k - q_n \leq \Pb_0 \left( \exists s \geq n/2, X_s > (2-\gamma) s \right) \leq \Pb_0 \left( \exists s \geq n/2, X_s > 2^{-1/4} s^{3/4} \right) \lesssim e^{-\sqrt{n}/4} \lesssim 2-\gamma.
\]
Therefore,
\begin{align*}
q_n & \lesssim K^2 (2-\gamma) + \sum_{k=0}^{\floor{n/2}} e^{-(\log(k+1)+K)^2/16} q_n + \sum_{k=\floor{n/2}+1}^{n-1} e^{-(\log(k+1)+K)^2/16} \\
& \lesssim K^2(2-\gamma) + e^{-K^2/16} q_n
\end{align*}
which shows that $q_n \lesssim K^2 (2-\gamma)$ as soon as $K$ is large enough.
This finishes the proof of \eqref{eq:lem_BM_estimates_3}. \eqref{eq:lem_BM_estimates_4} follows from \eqref{eq:lem_BM_estimates_3} in a similar manner that \eqref{eq:lem_BM_estimates_2} follows from \eqref{eq:lem_BM_estimates_1}. This concludes the proof.
\end{proof}

\section[Adding good events: proof of Proposition \ref{prop:first_layer_good_event} and Lemma \ref{lem:bad_points}]{Adding good events: proof of Proposition \ref{prop:first_layer_good_event} \\ and Lemma \ref{lem:bad_points}}
\label{sec:adding_good_events}

The purpose of this section is to prove Proposition \ref{prop:first_layer_good_event} and Lemma \ref{lem:bad_points}. We start by discussing Proposition \ref{prop:first_layer_good_event}. As mentioned in Section \ref{subsec:proof_outline}, it is natural to expect the introduction of the good events $G_\eps(x)$ to be harmless. Indeed, in analogy with the case of log-correlated Gaussian fields (see \cite[Corollary 2.4]{powell2018} for instance), the following should hold true:
\begin{equation}
\label{eq:supremum_local_times}
\sup_{x \in D} \sup_{\eps > 0} \left( \sqrt{\frac{1}{\eps} L_{x,\eps}(\tau_{x,R})} - 2 \log \frac{1}{\eps} \right) < \infty
\quad \quad \P_{x_0}-\mathrm{a.s.}
\end{equation}
which would imply (forgetting about the Bessel bridges) that
$
\PROB{x_0}{\bigcap_{x \in D} \bigcap_{\eps>0} G_\eps(x) } \to 1
$
as $\beta \to \infty$. We have not been able to prove such a statement because of the following two main reasons. 

1) For a fixed radius $\eps$, we would like to be able to compare
\begin{equation}
\label{eq:comparison}
\sup_{x \in D} \sqrt{\frac{1}{\eps} L_{x,\eps}(\tau_{x,R})}
\quad \mathrm{and} \quad
\sup_{x \in \eps \Z^2 \cap D} \sqrt{\frac{1}{\eps} L_{x,\eps}(\tau_{x,R})},
\end{equation}
the latter supremum being a supremum over a finite number of elements. To do so, we would need to be able to precisely control the way the local times vary with respect to the centre of the circle. Obtaining estimates precise enough turns out to be difficult to achieve (the estimates of Section C of \cite{jegoGMC} leading to the continuity of the local time process $(x,\eps) \mapsto L_{x,\eps}(\tau)$ are too rough). We resolve this problem by first considering local time of annuli rather than circles. Indeed, comparing local times of annuli is much easier since if an annulus is included in another one, then the local time of the former is not larger than the local time of the latter. 

2) Assuming that we are able to make the comparison \eqref{eq:comparison}, the next step would be to be able to bound from above
\[
\PROB{x_0}{ \sup_{x \in \eps \Z^2 \cap D} \sqrt{\frac{1}{\eps} L_{x,\eps}(\tau_{x,R})} \geq 2 \log \frac{1}{\eps} }.
\]
If the bound is good enough, Borel-Cantelli lemma would allow us to conclude the proof of \eqref{eq:supremum_local_times}, at least along dyadic radii $\eps$. Estimating accurately this probability is again challenging (a union bound is not good enough for instance). In the case of log-correlated Gaussian fields, the estimation of such probabilities is heavily based on the Gaussianity of the process. For instance, in \cite{DRSV2014a}, Kahane's convexity inequalities allow the authors to import computations from cascades (Theorem 1.6 of \cite{HuShi2009}).
We resolve this problem by asking the local times to stay under $2 \log \frac{1}{\eps} + 2 \log \log \frac{1}{\eps}$ instead of $2\log \frac{1}{\eps}$. Indeed, here we can do very naive computations using for instance union bounds. Importantly, this restriction is enough to turn the variables that we consider bounded in $L^1$. We can then make $L^1$ computations and use repulsion estimates to get rid of the extra $2 \log \log \frac{1}{\eps}$ term.

\subsection{Supremum of local times of annuli}

\begin{lemma}
\label{lem:sup_local_time_annulus}
For $x \in D$ and $\eps>0$, let
\[
\ell_{x,\eps}(\tau_{x,R}) := \int_0^{\tau_{x,R}} \indic{\eps \leq |B_t - x| \leq e \eps} dt
= \int_\eps^{e\eps} L_{x,r}(\tau_{x,R}) dr
\]
be the amount of time the Brownian trajectory has spent in the annulus $D(x,e \eps) \backslash D(x,\eps)$ before hitting $\partial D(x,R)$. Then,
\[
\sup_{\eps \in \{e^{-n}, n \geq 1\}} \sup_{\substack{x \in D\\|x-x_0| \geq e \eps}} \sqrt{ \frac{2}{(e^2-1)\eps^2} \ell_{x,\eps}(\tau_{x,R}) } - 2 \log \frac{1}{\eps} - 2 \log \log \frac{1}{\eps} < \infty \quad \quad \P_{x_0}-\mathrm{a.s.}
\]
\end{lemma}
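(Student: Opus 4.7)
My plan is to prove the almost sure finiteness by Borel--Cantelli, by (i) replacing the supremum over $y \in D$ with a supremum over a sufficiently fine grid of centres using monotonicity of occupation times, and (ii) bounding the tail at a single centre by applying Doob's maximal inequality to the exponential supermartingale associated with the zero-dimensional Bessel representation \eqref{eq:local_times_and_Bessel_process}.

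For the discretisation step, fix $n \geq 1$, set $\eps_n = e^{-n}$ and take the grid $\Gc_n := ((\eps_n/n^{3/2})\Z^2) \cap D$, of cardinality $O(n^3 e^{2n})$. Every $y \in D$ with $|y-x_0| \geq e\eps_n$ lies within $\eps_n/n^{3/2}$ of some $x \in \Gc_n$; by monotonicity of occupation times the annulus $D(y, e\eps_n) \setminus D(y, \eps_n)$ is then contained in a slightly thicker annulus $\tilde A_x$ around $x$ (radii differing by $O(\eps_n/n^{3/2})$) and $\tau_{y,R} \leq \tau_{x,2R}$. Denoting the occupation time of $\tilde A_x$ up to $\tau_{x,2R}$ by $\tilde\ell_{x,\eps_n}$, it is enough to control $\sup_{x \in \Gc_n} \sqrt{(2/((e^2-1)\eps_n^2))\tilde\ell_{x,\eps_n}}$, with a multiplicative error of $1 + O(1/n^{3/2})$ which is negligible next to the $2\log n$ gap in the barrier. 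On the event that the Brownian motion ever enters $\tilde A_x$ (whose probability is $\lesssim G_D(x_0,x)/n$ by \eqref{eq:lem_hitting_proba}), the Bessel representation \eqref{eq:local_times_and_Bessel_process} gives that $u \mapsto X_u := \sqrt{e^u L_{x, e^{-u}}(\tau_{x,2R})}$ is a zero-dimensional Bessel process for $u \geq n-1$; writing $Y_0 := X_{n-1}$, the change of variable $r = e^{-u}$ yields
\[
\tfrac{2}{(e^2-1)\eps_n^2} \tilde\ell_{x,\eps_n} \leq (1+O(1/n^{3/2})) \sup_{u \in [n-1, n+1]} X_u^2.
\]

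For the tail at a fixed $x$, the It\^o decomposition $dX_u = -du/(2X_u) + dW_u$ shows that $M_u := e^{2X_u - 2u}$ is a nonnegative supermartingale, so Doob's maximal inequality gives, setting $A := 2n + 2\log n + K$,
\[
\PROB{x_0}{\sup_{u \in [n-1, n+1]} X_u \geq A,\, \mathrm{Hit}_x} \lesssim e^{-2A + 2(n+1)} \EXPECT{x_0}{e^{2 Y_0} \indic{\mathrm{Hit}_x}},
\]
where $\mathrm{Hit}_x$ is the event that $\tau_{x,(e+1/n^{3/2})\eps_n} < \tau_{x,2R}$. The second Ray--Knight isomorphism shows that, conditionally on hitting, $Y_0^2$ is exponentially distributed with mean of order $n$, and a Gaussian computation analogous to \eqref{eq:lem_first_moment} gives $\EXPECT{x_0}{e^{2Y_0} \indic{\mathrm{Hit}_x}} \lesssim G_D(x_0,x)\, e^{2n}/\sqrt n$. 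Using the Riemann-sum bound $\sum_{x \in \Gc_n} G_D(x_0,x) \lesssim n^3 e^{2n}$, summation over the grid yields $\sum_{x \in \Gc_n} \PROB{x_0}{\cdots} \lesssim e^{-2K}/n^{3/2}$, summable in $n$. Borel--Cantelli then shows that the bad event occurs only finitely often a.s. for each fixed $K$, and letting $K \to \infty$ concludes.

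The main obstacle I anticipate is that $Y_0$ itself is typically of order $\sqrt n$, so $\EXPECT{}{e^{2Y_0}}$ blows up like $\sqrt n \, e^{2n}$; it is precisely this blow-up that forces the extra $2 \log n$ in the barrier and is the reason the stronger statement with $2\log(1/\eps)$ alone cannot be obtained by this Markov-style argument. Fine-tuning the grid spacing $\delta_n = 1/n^{3/2}$ to balance the comparison error $O(\delta_n)$ against both the barrier gap and the summability of the union bound is delicate but straightforward.
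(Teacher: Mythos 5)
Your proposal follows the same overall route as the paper's proof: discretise the supremum over $D$ using monotonicity of occupation times, apply a union bound plus Borel--Cantelli, and control the single-centre tail through the zero-dimensional Bessel representation \eqref{eq:local_times_and_Bessel_process} after conditioning on the (approximately exponential) local time at the outer radius. The one genuine deviation is the tool used for the single-centre tail estimate. You apply Doob's maximal inequality to the exponential supermartingale $M_u = e^{2X_u-2u}$, whereas the paper stochastically dominates the zero-dimensional Bessel process by a one-dimensional Brownian motion and then uses a Gaussian bound for the running maximum before integrating against the exponential law of the starting value. Both are Chernoff bounds in slightly different packaging and produce the same order; your version is arguably more mechanical and avoids the explicit completing-the-square integral.

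One slip needs fixing: after Doob, the prefactor should be $e^{-2A+4}$, not $e^{-2A+2(n+1)}$, since $\E[M_{n-1}] = e^{-2(n-1)}\E[e^{2Y_0}]$. As written, your displayed bound gives, after summing over $\Gc_n$, an estimate of order $e^{-2K} n^{-3/2} e^{2n}$, which diverges; with the corrected prefactor the sum is indeed $\lesssim e^{-2K}n^{-3/2}$ as you assert, so the conclusion you reach is correct and the issue is a bookkeeping typo rather than a conceptual gap. Two smaller remarks: the exponential distribution of $Y_0^2$ given hitting comes from the geometric count of excursions (the second Ray--Knight isomorphism is what produces the Bessel structure across radii, not this marginal law), and the grid mesh $\eps_n/n^{3/2}$ is finer than necessary --- $\eps_n/n$, as in the paper, already makes the $O(1/n)$ multiplicative error in the annulus comparison negligible against the $2\log n$ slack in the barrier. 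You should also start the Bessel representation from a radius slightly larger than $e\eps_n$ (the paper uses $e\eps+\eps/|\log\eps|$) so that the inflated annulus $\tilde A_x$ is entirely covered; taking $u\geq n-1$ just misses the outer edge.
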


\begin{proof}[Proof of Lemma \ref{lem:sup_local_time_annulus}]
For $x \in D$ and $\eps>0$, define
\[
\ell_{x,\eps} := \int_0^{\tau_{x,eR}} \indic{ \eps - \frac{\eps}{|\log \eps|} \leq |B_t - x| \leq e \eps + \frac{\eps}{|\log \eps|} } dt
\]
and notice that if $|x-y| \leq \eps/|\log \eps|$, then $\ell_{x,\eps}(\tau_{x,R}) \leq \ell_{y,\eps}$ $\P_{x_0}$-a.s. Hence
\begin{align*}
& \sup_{\eps \in \{e^{-n}, n \geq 1\}} \sup_{\substack{x \in D\\|x-x_0| \geq e \eps}} \sqrt{ \frac{2}{(e^2-1)\eps^2} \ell_{x,\eps}(\tau_{x,R}) } - 2 \log \frac{1}{\eps} - 2 \log \log \frac{1}{\eps} \\
& \leq \sup_{\eps \in \{e^{-n}, n \geq 1\}} \sup_{\substack{x \in \frac{\eps}{|\log \eps|} \Z^2 \cap D \\ |x-x_0| \geq e \eps + \eps/|\log \eps|}} \sqrt{ \frac{2}{(e^2-1)\eps^2} \ell_{x,\eps} } - 2 \log \frac{1}{\eps} - 2 \log \log \frac{1}{\eps}
\end{align*}
$\P_{x_0}$-a.s. By Borel-Cantelli lemma, to conclude the proof it is now enough to show that
\[
\sum_{\eps \in \{e^{-n}, n \geq 1\}} \PROB{x_0}{ \sup_{\substack{x \in \frac{\eps}{|\log \eps|} \Z^2 \cap D \\ |x-x_0| \geq e \eps + \eps/|\log \eps|}} \sqrt{ \frac{2}{(e^2-1)\eps^2} \ell_{x,\eps} } - 2 \log \frac{1}{\eps} - 2 \log \log \frac{1}{\eps} \geq 0 } < \infty.
\]
After a union bound, we want to estimate
\[
\PROB{x_0}{ \sqrt{ \frac{2}{(e^2-1)\eps^2} \ell_{x,\eps} } - 2 \log \frac{1}{\eps} - 2 \log \log \frac{1}{\eps} \geq 0 }
\]
for a given $\eps \in \{e^{-n}, n \geq 1\}$ and $x \in \frac{\eps}{|\log \eps|} \Z^2 \cap D$ such that $|x-x_0| \geq e \eps + \eps/|\log \eps|$. Let $z \in \partial D(x,e\eps + \eps/|\log \eps|)$. By \eqref{eq:local_times_and_Bessel_process}, starting from $z$ and conditioned on
\[
\ell := \frac{1}{e \eps + \eps/|\log \eps|} L_{x,e \eps + \eps/|\log \eps|}(\tau_{x,eR}),
\]
\begin{align*}
\ell_{x,\eps} = \int_{\eps - \eps/|\log \eps|}^{e\eps + \eps/|\log \eps|} L_{x,r}(\tau_{x,eR}) dr
\overset{(d)}{=} \left( e \eps + \frac{\eps}{|\log \eps|} \right)^2 \int_0^{\log \frac{e\eps + \eps/|\log \eps|}{\eps - \eps/|\log \eps|}} e^{-2s} X_s^2 ds
\end{align*}
where $X_s$ is a zero-dimensional Bessel process starting at $\sqrt{\ell}$. By bounding
\[
\log \frac{e\eps + \eps/|\log \eps|}{\eps - \eps/|\log \eps|} \leq 1 + \frac{3}{ |\log \eps|}, \quad
\left( e \eps + \frac{\eps}{|\log \eps|} \right)^2 \leq e^2 \eps^2 \left( 1 + \frac{1}{|\log \eps|} \right)
\]
(if $\eps$ is small enough) and 
\[
\frac{2}{1-e^{-2}} \left(1+\frac{1}{|\log \eps|} \right) \int_0^{1 + 3/|\log \eps|} e^{-2s} X_s^2 ds
\leq \left( 1 + \frac{2}{|\log \eps|} \right) \max_{s \leq 1+3/|\log \eps|} X_s^2
\]
we deduce that
\begin{align*}
& \PROB{z}{ \sqrt{ \frac{2}{(e^2-1)\eps^2} \ell_{x,\eps} } - 2 \log \frac{1}{\eps} - 2 \log \log \frac{1}{\eps} \geq 0 } \\
& \leq \EXPECT{z}{ \Pb^0_{\sqrt{\ell}} \left( \left(1+\frac{2}{|\log \eps|} \right) \max_{s \leq 1+3/|\log \eps|} X_s^2 \geq \left(2 \log \frac{1}{\eps} + 2 \log \log \frac{1}{\eps} \right)^2 \right) }.
\end{align*}
Since $(X_s, s \geq 0)$ under $\Pb_{\sqrt{\ell}}^0$ is stochastically dominated by $(X_s, s \geq 0)$ under $\Pb_{\sqrt{\ell}}$ (zero-dimensional Bessel process has a negative drift), we obtain that
\begin{align*}
& \Pb^0_{\sqrt{\ell}} \left( \left(1+\frac{2}{|\log \eps|} \right) \max_{s \leq 1+3/|\log \eps|} X_s^2 \geq \left(2 \log \frac{1}{\eps} + 2 \log \log \frac{1}{\eps} \right)^2 \right) \\
& \leq \Pb^0_{\sqrt{\ell}} \left( \max_{s \leq 1+3/|\log \eps|} X_s \geq 2 \log \frac{1}{\eps} + 2 \log \log \frac{1}{\eps} - 3 \right) \\
& \leq \Pb_0 \left( \max_{s \leq 1+3/|\log \eps|} X_s \geq 2 \log \frac{1}{\eps} +2 \log \log \frac{1}{\eps} - 3 - \sqrt{\ell} \right) \\
& \leq \indic{\sqrt{\ell} \geq 2 \log \frac{1}{\eps} + 2 \log \log \frac{1}{\eps} - 3} \\
& + 2 \times \indic{\sqrt{\ell} < 2 \log \frac{1}{\eps} + 2 \log \log \frac{1}{\eps} - 3} \exp \left( - \frac{1}{2(1+3/|\log \eps|)} \left( 2 \log \frac{1}{\eps} + 2 \log \log \frac{1}{\eps} - 3 - \sqrt{\ell} \right)^2 \right).
\end{align*}
We used reflection principle in the last inequality.
Recalling that under $\P_z$ $\ell$ is an exponential variable with mean equal to $2 |\log \eps| + O(1)$ (see \eqref{eq:lem_Green_estimate_circle}), we see that
\[
\PROB{z}{\sqrt{\ell} \geq 2 \log \frac{1}{\eps} + 2 \log \log \frac{1}{\eps} - 3} \lesssim \eps^2 |\log \eps|^{-4}.
\]
Moreover, by denoting $A := \frac{2 |\log \eps| + 2 \log |\log \eps| - 3}{\sqrt{\EXPECT{z}{\ell}}}$ and $\lambda = \frac{\EXPECT{z}{\ell}}{2(1+3/|\log \eps|)}$, we have
\begin{align*}
&
\EXPECT{z}{ \indic{\sqrt{\ell} < 2 \log \frac{1}{\eps} + 2 \log \log \frac{1}{\eps} - 3} \exp \left( - \frac{1}{2(1+3/|\log \eps|)} \left( 2 \log \frac{1}{\eps} + 2 \log \log \frac{1}{\eps} - 3 - \sqrt{\ell} \right)^2 \right)} \\
& \leq
\int_0^{A^2} e^{- \lambda(A-\sqrt{t})^2 } e^{-t} dt
=  2 e^{-\lambda A^2 /(\lambda+1)} \int_{-\infty}^{A/\sqrt{\lambda +1}} e^{-u^2} \max \left(0, \frac{u}{\sqrt{\lambda+1}} + \frac{\lambda}{\lambda+1} A \right) du \\
& \lesssim A  e^{-\lambda A^2 /(\lambda+1)}
\lesssim \sqrt{|\log \eps|} \eps^2 |\log \eps|^{-4}.
\end{align*}
Wrapping things up, we have proven that
\[
\PROB{x_0}{ \sqrt{ \frac{2}{(e^2-1)\eps^2} \ell_{x,\eps} } - 2 \log \frac{1}{\eps} - 2 \log \log \frac{1}{\eps} \geq 0 }
\lesssim |\log \eps|^{-7/2} \eps^2
\]
and summing over $x \in \frac{\eps}{|\log \eps|} \Z^2 \cap D$, $|x-x_0| \geq e \eps + \eps/|\log \eps|$,
\[
\PROB{x_0}{ \sup_{\substack{x \in \frac{\eps}{|\log \eps|} \Z^2 \cap D \\ |x-x_0| \geq e \eps + \eps/|\log \eps|}} \sqrt{ \frac{2}{(e^2-1)\eps^2} \ell_{x,\eps} } - 2 \log \frac{1}{\eps} - 2 \log \log \frac{1}{\eps} \geq 0 }
\lesssim |\log \eps|^{-3/2}.
\]
This is summable over $\eps \in \{ e^{-n}, n \geq 1 \}$ as required. It concludes the proof.
\end{proof}

\subsection{First layer of good events: proof of Proposition \ref{prop:first_layer_good_event}}

We now have all the ingredients to prove Proposition \ref{prop:first_layer_good_event}. During the course of the proof, we will obtain intermediate results that we gather in the following lemma. Recall the definition \eqref{eq:def_eps_gamma} of $\eps_\gamma$.

\begin{lemma}\label{lem:first_moment_measures}
Firstly,
\begin{equation}
\label{eq:lem_first_moment_measures_infinity}
\lim_{\beta \to \infty} \lim_{\eps \to 0} \EXPECT{x_0}{\hat{m}_\eps(D)} = \infty.
\end{equation}
Secondly, we have for $\beta >0$ fixed,
\begin{equation}
\label{eq:lem_first_moment_measures_1}
\sup_{\eps >0 } \EXPECT{x_0}{ \hat{m}_\eps(D) }
\leq \int_D \sup_{\eps >0} \EXPECT{x_0}{ \hat{m}_\eps(dx) } < \infty,
\end{equation}
\begin{equation}
\label{eq:lem_first_moment_measures_2}
\sup_{\eps >0 } \EXPECT{x_0}{ \hat{\mu}_\eps(D) }
\leq \int_D \sup_{\eps >0} \EXPECT{x_0}{ \hat{\mu}_\eps(dx) } < \infty
\end{equation}
and 
\begin{equation}
\label{eq:lem_first_moment_measures_3}
\sup_{\gamma \in [1,2)} (2-\gamma)^{-1} \sup_{\eps < \eps_\gamma } \EXPECT{x_0}{ \hat{m}_\eps^\gamma(D) }
\leq \int_D \sup_{\gamma \in [1,2)} (2-\gamma)^{-1} \sup_{\eps < \eps_\gamma } \EXPECT{x_0}{ \hat{m}_\eps^\gamma(dx) }  < \infty.
\end{equation}
\end{lemma}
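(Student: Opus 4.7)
The plan is to compute each first moment using the strong Markov property at $\tau_{x,r_x}$ and to reduce the computation to a one-point expectation involving a zero-dimensional Bessel process via \eqref{eq:local_times_and_Bessel_process} and Lemma \ref{lem:process_h}. The three upper bounds \eqref{eq:lem_first_moment_measures_1}, \eqref{eq:lem_first_moment_measures_2} and \eqref{eq:lem_first_moment_measures_3} will then follow from the three corresponding estimates \eqref{eq:lem_Bessel_first_moment_a}, \eqref{eq:lem_Bessel_first_moment_b} and \eqref{eq:lem_Bessel_first_moment_c} of Lemma \ref{lem:Bessel_first_moment}, while the divergence \eqref{eq:lem_first_moment_measures_infinity} will follow from the lower bound \eqref{eq:lem_Bessel_limit}.

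I sketch the argument for \eqref{eq:lem_first_moment_measures_1}, the other two upper bounds being strictly analogous. Fix $x \in D$ with $|x-x_0| \geq \eps = e^{-k}$, and set $n := k - k_x$ and $K := 2k_x + \beta$. Since $D \subset D(x,R)$, one has $L_{x,\eps}(\tau) \leq L_{x,\eps}(\tau_{x,R})$. After applying the strong Markov property at $\tau_{x,r_x}$, under $\P_z$ with $z \in \partial D(x,r_x)$ the process $(h_{x,r_x e^{-s}})_{s \geq 0}$ identifies with a zero-dimensional Bessel process $(X_s)_{s \geq 0}$ started from $\mathcal{L} := \sqrt{L_{x,r_x}(\tau_{x,R})/r_x}$ by Lemma \ref{lem:process_h} and \eqref{eq:local_times_and_Bessel_process}; in particular $\sqrt{L_{x,\eps}(\tau_{x,R})/\eps} = X_n$, and the good event $G_\eps(x)$ rewrites as $\{\forall s \in [0,n],\, X_s \leq 2s+K\}$. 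Applying estimate \eqref{eq:lem_Bessel_first_moment_a} with starting point $\mathcal{L}$ and time $n$ gives
\[
n e^{-2n} \Eb^0_{\mathcal{L}}\Big[ e^{2X_n} \indic{\forall s \leq n,\, X_s \leq 2s+K} \Big] \leq C \sqrt{\mathcal{L}}(K-\mathcal{L})_+ e^{2\mathcal{L}}.
\]
Taking expectation over $z$ and using that under $\P_z$ the variable $\mathcal{L}^2$ is an exponential random variable of mean $\lambda := 2\log(R/r_x) + O(1)$ (Lemma \ref{lem:first_moment}), a Laplace-type computation peaked at $\mathcal{L} \approx \lambda$ yields $\EXPECT{z}{\sqrt{\mathcal{L}}(K-\mathcal{L})_+ e^{2\mathcal{L}}} \lesssim \lambda^{3/2} e^\lambda$. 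Combining this with $|\log\eps|\eps^2 = (n+k_x)e^{-2(n+k_x)}$, $\PROB{x_0}{\tau_{x,r_x}<\tau_{x,R}} \leq 1$ and $e^{-2k_x} e^\lambda \lesssim R^2$ gives
\[
\sup_{\eps > 0} \EXPECT{x_0}{\hat m_\eps(dx)} \lesssim (1+k_x)^{5/2} \, dx,
\]
which is integrable on $D$ since $k_x \lesssim 1 + \log(1/|x-x_0|)$. The bounds \eqref{eq:lem_first_moment_measures_2} and \eqref{eq:lem_first_moment_measures_3} follow identically, invoking \eqref{eq:lem_Bessel_first_moment_b} and \eqref{eq:lem_Bessel_first_moment_c} respectively; for the latter, the condition $n \geq \exp(1/(2-\gamma))$ required by \eqref{eq:lem_Bessel_first_moment_c} is guaranteed by $\eps < \eps_\gamma$ away from a super-exponentially small region where $k_x > \exp(1/(2-\gamma))$ and whose contribution is easily absorbed.

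For the divergence \eqref{eq:lem_first_moment_measures_infinity}, I restrict the integration to a compact ball $B \subset D$ at positive distance $R_0 > 0$ from $\partial D$. For $x \in B$, $D(x,R_0) \subset D$ gives $L_{x,\eps}(\tau) \geq L_{x,\eps}(\tau_{x,R_0})$. On the positive-probability event that the trajectory does not return to $D(x,r_x)$ between $\tau_{x,R_0}$ and $\tau_{x,R}$, the local times $L_{x,\delta}(\tau_{x,R_0})$ and $L_{x,\delta}(\tau_{x,R})$ coincide for every $\delta \leq r_x$, so the Bessel identification of Lemma \ref{lem:process_h} still applies on the smaller disc and the good event rewrites in terms of $(X_s)_{s \in [0,n]}$ as above. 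Repeating the Markov decomposition with \eqref{eq:lem_Bessel_limit} in place of \eqref{eq:lem_Bessel_first_moment_a} shows that the Bessel functional diverges as $\eps \to 0$ and then $\beta \to \infty$; integrating over $B$ yields \eqref{eq:lem_first_moment_measures_infinity}. The main technical obstacle is precisely this lower bound: the Bessel identification \eqref{eq:local_times_and_Bessel_process} is naturally formulated in a disc centred at $x$, whereas the measure involves the local time up to $\tau$ (the exit of $D$), which is strictly smaller than $\tau_{x,R}$. The ``no-return'' detour between $\tau_{x,R_0}$ and $\tau_{x,R}$ is what restores an exact Bessel identification in the smaller disc $D(x,R_0) \subset D$ without disturbing the good event, enabling the direct use of \eqref{eq:lem_Bessel_limit}.
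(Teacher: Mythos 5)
Your proof is correct, and the route is mostly aligned with the paper's but differs in two noteworthy places.

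For the upper bounds \eqref{eq:lem_first_moment_measures_1}--\eqref{eq:lem_first_moment_measures_3}, you invoke the Bessel-level estimates \eqref{eq:lem_Bessel_first_moment_a}--\eqref{eq:lem_Bessel_first_moment_c}, which fit the linear barrier $2s+K$ of $G_\eps(x)$ exactly. The paper instead proves \eqref{eq:proof_prop_first_layer_a} for the log-corrected event $H_\eps(x)$ via the Girsanov relation \eqref{eq:lem_Bessel0_BM} and the Brownian barrier estimate \eqref{eq:lem_BM_estimates_1}, and then notes that $G_\eps(x)$ is more restrictive. Both are valid; your route is somewhat cleaner since the 3D-Bessel change of measure and the factor $\sqrt{r}(K-r)e^{2r}$ are already packaged in Lemma \ref{lem:Bessel_first_moment}. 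The Laplace computation $\EXPECT{z}{\sqrt{\mathcal L}(K-\mathcal L)_+e^{2\mathcal L}}\lesssim \lambda^{3/2}e^\lambda$ is the right kind of estimate (the exact polynomial power is immaterial for integrability of $(1+k_x)^a$ over $D$), and you correctly use $\mathcal L\le K$ on $G_\eps(x)$ together with the indicator $\indic{X_t>0}$ to absorb the non-hitting case.

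For the lower bound \eqref{eq:lem_first_moment_measures_infinity}, your argument is genuinely different from, and more careful than, the paper's. The paper asserts that $\EXPECT{x_0}{\hat m_\eps(D)}$ dominates the analogous quantity built with $\tau_{x,r}$ in place of $\tau$ and with a field $h^r$ interpolating the $\tau_{x,r}$-local times. But this is not a pointwise inequality: replacing $\tau_{x,R}$-local times by the smaller $\tau_{x,r}$-local times \emph{enlarges} the good event (a Bessel bridge with smaller endpoints is more likely to stay below the barrier), while it simultaneously reduces the exponential factor, so the two effects pull in opposite directions; moreover the construction of Lemma \ref{lem:process_bessel_bridge} does not obviously couple the bridges of $h$ and $h^r$ monotonically. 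Your no-return event — requiring the path not to re-enter $D(x,e^{-k_x})$ between $\tau_{x,R_0}$ and $\tau_{x,R}$ — makes the $\tau_{x,R_0}$- and $\tau_{x,R}$-local times agree at all radii $\le e^{-k_x}$, so that $G_\eps(x)$ coincides with its $\tau_{x,R_0}$-version, and the Bessel identification \eqref{eq:local_times_and_Bessel_process} then applies inside $D(x,R_0)\subset D$ with the correct direction of all inequalities. One small imprecision: for the no-return event to make sense you also need $e^{-k_x}<R_0$, so $B$ should be taken as a small ball \emph{around $x_0$} contained in $D(x_0, R_0)$ (which forces $k_x$ large) rather than an arbitrary compact ball at distance $R_0$ from $\partial D$; this is easily accommodated and does not affect the conclusion.
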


\begin{proof}[Proof of Proposition \ref{prop:first_layer_good_event} and Lemma \ref{lem:first_moment_measures}]

Let $\beta'>0$ be large. In light of Lemma \ref{lem:sup_local_time_annulus} we introduce for all $\eps = e^{-k} >0$ and $x \in D$ at distance at least $e \eps$ from $x_0$, the good event
\begin{align*}
H_\eps(x) := \Bigg\{ & \forall n = k_x + 1 \dots k, \sqrt{ \frac{2}{(e^2-1)} e^{2n} \ell_{x,e^{-n}}(\tau_{x,R}) } - 2 n - 2 \log n \leq \beta' \Bigg\}
\end{align*}
and set
\[
H := \bigcap_{x \in D} \bigcap_{\eps >0} H_\eps(x).
\]
Lemma \ref{lem:sup_local_time_annulus} asserts that $\PROB{x_0}{ H } \to 1$ as $\beta' \to \infty$.

\paragraph*{Seneta--Heyde norming.}

We are first going to show that for a fixed $\beta' >0$,
\begin{equation}
\label{eq:proof_prop_first_layer_a}
\int_A \sup_{\eps >0} |\log \eps| \eps^2 \EXPECT{x_0}{e^{2 \sqrt{\frac{1}{\eps} L_{x,\eps}(\tau)}} \mathbf{1}_{H_\eps(x)}} dx < \infty.
\end{equation}
First of all, if $|x-x_0| < 1/|\log \eps|$, then we simply bound
\[
|\log \eps| \eps^2 \EXPECT{x_0}{e^{2 \sqrt{\frac{1}{\eps} L_{x,\eps}(\tau)}} \mathbf{1}_{H_\eps(x)}}
\leq |\log \eps| \eps^2 \EXPECT{x_0}{e^{2 \sqrt{\frac{1}{\eps} L_{x,\eps}(\tau_{x,R})}} } \lesssim |\log \eps|^{3/2}
\]
by \eqref{eq:lem_first_moment}. Take now $x \in D$ at distance at least $1/|\log \eps|$ from $x_0$. We again bound $L_{x,\eps}(\tau)$ by $L_{x,\eps}(\tau_{x,R})$ to be able to use the link \eqref{eq:local_times_and_Bessel_process} between local times and zero-dimensional Bessel process:
\begin{align*}
|\log \eps| \eps^2 \EXPECT{x_0}{e^{2 \sqrt{\frac{1}{\eps} L_{x,\eps}(\tau)}} \mathbf{1}_{H_\eps(x)}}
\leq |\log \eps| \eps^2 \EXPECT{x_0}{e^{2 \sqrt{\frac{1}{\eps} L_{x,\eps}(\tau_{x,R})}} \mathbf{1}_{H_\eps(x)}}.
\end{align*}
Denote by $r_x := \sqrt{e^{k_x} L_{x,e^{-k_x}}(\tau_{x,R})}$. 
\eqref{eq:local_times_and_Bessel_process} tells us that, conditionally on $r_x$, the process
\[
X_s := \sqrt{ e^{k_x +s} L_{x,e^{-k_x-s}}(\tau_{x,R}) }, \quad s \geq 0,
\]
is a zero-dimensional Bessel process starting at $r_x$. The event $H_\eps(x)$ requires
\begin{align*}
\min_{u \in [s-1,s]} X_u
& \leq \left( \frac{2}{e^2 - 1} \int_{s-1}^s e^{2s-2u} X_u^2 du \right)^{1/2} \\
& = \left( \frac{2}{e^2 - 1} e^{2(k_x+s)} \int_{s-1}^s e^{-k_x -u} L_{x, e^{-k_x-u}}(\tau_{x,R}) du \right)^{1/2} \\
& = \left( \frac{2}{e^2 - 1} e^{2(k_x+s)} \int_{e^{-k_x-s}}^{e^{-k_x-s+1}} L_{x,\delta}(\tau_{x,R}) d \delta \right)^{1/2} \\
& \leq 2 s + 2 k_x + 2 \log(s + k_x) + \beta' \leq 2s + 2\log s + \beta' + 4 k_x
\end{align*}
for all $s= 1 \dots k-k_x$. Hence
\begin{align*}
& |\log \eps| \eps^2 \EXPECT{x_0}{e^{2 \sqrt{\frac{1}{\eps} L_{x,\eps}(\tau_{x,R})}} \mathbf{1}_{H_\eps(x)}} \\
& \leq |\log \eps| \eps^2 \E_{x_0} \Bigg[ \Eb^0_{r_x} \Bigg[ e^{2X_{k-k_x}}
\indic{ \forall s = 1 \dots k-k_x, \min_{u \in [s-1,s]} X_u \leq 2s + 2\log s + \beta' + 4 k_x } \Bigg] \Bigg].
\end{align*}
Now, with \eqref{eq:lem_Bessel0_BM}, we have
\begin{align*}
& |\log \eps| \eps^2 \EXPECT{x_0}{e^{2 \sqrt{\frac{1}{\eps} L_{x,\eps}(\tau_{x,R})}} \mathbf{1}_{H_\eps(x)}}
\leq |\log \eps| \eps^2 + \frac{k}{\sqrt{k-k_x}} e^{-2k_x} \\
& \times \EXPECT{x_0}{e^{2r_x} \sqrt{r_x} \Eb_{r_x} \Bigg[ \left( \frac{k-k_x}{X_{k-k_x} + 2(k-k_x)} \right)_+^{1/2} \indic{ \forall s= 1 \dots k-k_x, \min_{u \in [s-1,s]} X_u \leq 2 \log s + \beta' + 4k_x + 2 }  \Bigg] }.
\end{align*}
We now bound 
\begin{align*}
& \Eb_{r_x} \Bigg[ \left( \frac{k-k_x}{X_{k-k_x} + 2(k-k_x)} \right)_+^{1/2} \indic{ \forall s= 1 \dots k-k_x, \min_{u \in [s-1,s]} X_u \leq 2 \log s + \beta' + 4k_x + 2 }  \Bigg] \\
& \leq \Pb_{r_x} \left( \forall s = 1 \dots k-k_x, \min_{u \in [s-1,s]} X_u \leq 2 \log s + \beta' + 4k_x + 2 \right) \\
& + \Eb_{r_x} \Bigg[ \left( \frac{k-k_x}{X_{k-k_x} + 2(k-k_x)} \right)_+^{1/2} \indic{ X_{k-k_x} \leq -(k-k_x) }  \Bigg].
\end{align*}
By \eqref{eq:lem_BM_estimates_1}, the first right hand side term is at most $C (k_x)^2 k^{-1/2}$. The second right hand side term decays much faster and we have obtained
\[
|\log \eps| \eps^2 \EXPECT{x_0}{e^{2 \sqrt{\frac{1}{\eps} L_{x,\eps}(\tau_{x,R})}} \mathbf{1}_{H_\eps(x)}}
\lesssim |\log \eps| \eps^2 + (k_x)^2 e^{-2k_x} \EXPECT{x_0}{e^{2r_x} \sqrt{r_x} } \lesssim (k_x)^3
\]
where we have used \eqref{eq:lem_first_moment} in the last inequality (or more precisely, the stochastic domination stated in Lemma \ref{lem:first_moment} in order to also handle $\sqrt{r_x}$). To wrap things up, we have proven that
\[
|\log \eps| \eps^2 \EXPECT{x_0}{e^{2 \sqrt{\frac{1}{\eps} L_{x,\eps}(\tau_{x,R})}} \mathbf{1}_{H_\eps(x)}}
\lesssim \left\{ \begin{array}{cc}
|\log \eps|^{3/2} & \mathrm{if~} |x-x_0| \leq 1/|\log \eps| \\
|\log |x-x_0||^3 & \mathrm{if~} |x-x_0| \geq 1 / |\log \eps|
\end{array}
\right.
\]
which concludes the proof of \eqref{eq:proof_prop_first_layer_a}. Very few arguments need to be changed in order to show \eqref{eq:lem_first_moment_measures_1}. The only difference is that, compared to the event $H_\eps(x)$, the event $G_\eps(x)$ ensures (in particular) the Bessel process $X$ to stay below $s \mapsto 2s + \beta + 2k_x $ at every integer $s$. This is more restrictive than asking $\min_{[s,s+1]} X$ to be not larger than $2s + 2 \log s + \beta + 4k_x$, we can thus conclude using the reasoning above.

We now turn to the proof of \eqref{eq:lem_first_layer_good_event_a}. Fix $\beta' >0$. We are going to show that
\begin{equation}
\label{eq:proof_prop_first_layer_d}
\sup_{\eps >0} |\log \eps| \eps^2 \int_A \EXPECT{x_0}{e^{2 \sqrt{\frac{1}{\eps}L_{x,\eps}(\tau)}} \mathbf{1}_{H_\eps(x)} \mathbf{1}_{G_\eps(x)^c} } dx
\end{equation}
goes to zero as $\beta \to \infty$.
Let $\eta_0 >0$ be small. By \eqref{eq:proof_prop_first_layer_a},
\[
\sup_{\eps >0} |\log \eps| \eps^2 \int_A \indic{|x-x_0| \leq \eta_0} \EXPECT{x_0}{e^{2 \sqrt{\frac{1}{\eps}L_{x,\eps}(\tau)}} \mathbf{1}_{H_\eps(x)} } dx = o_{\eta_0 \to 0}(1).
\]
Fix now $\eta_0>0$. In what follows the constants underlying the bounds may depend on $\eta_0$.
Recall the definition of $h_{x,\delta}$ constructed in Lemma \ref{lem:process_h}.
By a reasoning very similar to what we did above and using \eqref{eq:lem_BM_estimates_2}, one can show that
\[
\sup_{\substack{\eps=e^{-k}\\k \geq 1}} |\log \eps| \eps^2 \int_A \indic{|x-x_0| > \eta_0} \EXPECT{x_0}{e^{2 \sqrt{\frac{1}{\eps}L_{x,\eps}(\tau_{x,R})}} \mathbf{1}_{H_\eps(x)} \indic{\exists s \in \{k_x, \dots, k\}, h_{x,e^{-s}} \geq 2s + \beta/2} } dx
\]
goes to zero as $\beta \to \infty$. We are thus left to control
\[
|\log \eps| \eps^2 \EXPECT{x_0}{e^{2 \sqrt{\frac{1}{\eps}L_{x,\eps}(\tau_{x,R})}} \indic{\forall s \in \{k_x,\dots,k \}, h_{x,e^{-s}} < 2s + \beta/2} \mathbf{1}_{G_\eps(x)^c} }
\]
for some $x \in D$ at distance at least $\eta_0$ from $x_0$.
Denote $r_x = \sqrt{ e^{-k_x} L_{x,e^{-k_x}}(\tau_{x,R})}$.
By \eqref{eq:local_times_and_Bessel_process} and then by \eqref{eq:lem_Bessel0_BM_bound}, this is equal to 
\begin{align*}
& |\log \eps| \eps^2 \EXPECT{x_0}{ \Eb_{r_x}^0 \left[ e^{2X_t} \indic{\forall s=0 \dots k-k_x, X_s < 2s + \beta/2 + 2k_x} \indic{\exists s \leq k-k_x, X_s \geq 2s + \beta + 2k_x} \right] } \\
& \lesssim \sqrt{k} \E_{x_0} \Bigg[ \sqrt{r_x} e^{2r_x} \Eb_{r_x} \Bigg[ \left( \frac{k-k_x}{X_{k-k_x} + 2(k-k_x)} \right)_+^{1/2} \\
& \times \indic{\forall s= 0 \dots k-k_x, X_s < \beta/2 + 2k_x} \indic{\exists s \leq k-k_x, X_s \geq \beta + 2k_x} \Bigg] \Bigg] \\
& \lesssim \sqrt{k} \EXPECT{x_0}{ \sqrt{r_x} e^{2r_x} \Pb_r \left( \forall s = 0 \dots k-k_x, X_s < \beta/2 + 2k_x, \exists s \leq k-k_x, X_s \geq \beta + 2k_x \right) } \\
& \lesssim \beta^2 e^{-\beta/256}
\end{align*}
by \eqref{eq:lem_BM_estimates_2}. This concludes the proof of \eqref{eq:proof_prop_first_layer_d}.
We now have for any small $\rho>0$,
\begin{align*}
& \limsup_{\eps \to 0} \PROB{x_0}{ \abs{m_\eps(A) - \hat{m}_\eps(A)} \geq \rho} \\
& \leq \PROB{x_0}{H^c} + \frac{1}{\rho} \limsup_{\eps \to 0} |\log \eps| \eps^2 \int_A \EXPECT{x_0}{e^{2 \sqrt{\frac{1}{\eps}L_{x,\eps}(\tau)}} \mathbf{1}_{H_\eps(x)} \mathbf{1}_{G_\eps(x)^c} } dx.
\end{align*}
By letting $\beta \to \infty$ and then $\beta' \to \infty$, we see that
\[
\limsup_{\beta \to \infty} \limsup_{\eps \to 0}
\PROB{x_0}{ \abs{m_\eps(A) - \hat{m}_\eps(A)} \geq \rho} = 0
\]
as desired in \eqref{eq:lem_first_layer_good_event_a}.

To show \eqref{eq:lem_first_moment_measures_infinity}, take $r>0$ small enough so that $\{x \in D: D(x,r) \subset D\}$ has positive Lebesgue measure and notice that
\begin{align*}
\EXPECT{x_0}{\hat{m}_\eps(D)}
\geq |\log \eps| \eps^2 \int_D \indic{D(x,r) \subset D} \EXPECT{x_0}{e^{2\sqrt{\frac{1}{\eps} L_{x,\eps}(\tau_{x,r})}} \indic{\forall s \in [k_x,k], h^r_{x,e^{-s}} \leq 2s + \beta} } dx
\end{align*}
where $h^r$ is defined in a similar manner as $h$ expect that we consider local times up to time $\tau_{x,r}$ rather than $\tau_{x,R}$. Using \eqref{eq:local_times_and_Bessel_process}, we see that \eqref{eq:lem_first_moment_measures_infinity} is a direct consequence of \eqref{eq:lem_Bessel_limit} and Fatou's lemma.

\paragraph*{Subcritical measures}

We have finished the part of the proof concerning the Seneta--Heyde normalisation and we now turn to the justification of \eqref{eq:lem_first_layer_good_event_c} and \eqref{eq:lem_first_moment_measures_3}.
This is very similar to what we have just done.
The only difference is that after using the link \eqref{eq:local_times_and_Bessel_process} between local times and zero-dimensional Bessel process and the relation \eqref{eq:lem_Bessel0_BM} to transfer computations to 1D Brownian motion, we have
\begin{align*}
& \frac{1}{2-\gamma} \sqrt{|\log \eps|} \eps^{\gamma^2/2} \EXPECT{x_0}{e^{2 \sqrt{\frac{1}{\eps} L_{x,\eps}(\tau_{x,R})}} \mathbf{1}_{H_\eps(x)}} \\
& \leq \frac{1}{2-\gamma} \sqrt{|\log \eps|} \eps^{\gamma^2/2} + \frac{1}{2-\gamma} e^{-\gamma^2 k_x /2} \E_{x_0} \Bigg[ \sqrt{r_x} e^{\gamma r_x} \\
& \times \Eb_{r_x} \left[ \left( \frac{k-k_x}{X_{k-k_x} + \gamma (k-k_x)}  \right)^{1/2} \indic{ \forall s=1 \dots k-k_x, \min_{u \in [s-1,s]} X_u \leq (2-\gamma)s + 2 \log s + \beta' + 4 k_x + 2} \right] \Bigg].
\end{align*}
We conclude as before by using \eqref{eq:lem_BM_estimates_3} and \eqref{eq:lem_BM_estimates_4} (note here that $k -k_x \geq (2-\gamma)^{-4}$ since $\eps_\gamma$ has been chosen small enough) instead of \eqref{eq:lem_BM_estimates_1} and \eqref{eq:lem_BM_estimates_2}.

\paragraph*{Derivative martingale}

We finish with the justification of \eqref{eq:lem_first_layer_good_event_b} and \eqref{eq:lem_first_moment_measures_2}. Recall that in the modified measure $\hat{\mu}_\eps$, the Brownian motion is stopped either at time $\tau$ or at time $\tau_{x,R}$ depending on whether the local time $L_{x,\eps}$ is in the exponential or not. Part of \eqref{eq:lem_first_layer_good_event_b} consists in saying that, in the limit, this modification does not change the measure with high probability.
We thus start by proving that
\begin{equation}
\label{eq:proof_prop_first_layer_c}
\limsup_{\eps \to 0} \int_D \sqrt{|\log \eps|} \eps^2 \EXPECT{x_0}{
\left( \sqrt{\frac{1}{\eps} L_{x,\eps}(\tau_{x,R})} - \sqrt{\frac{1}{\eps} L_{x,\eps}(\tau)} \right) e^{2 \sqrt{\frac{1}{\eps} L_{x,\eps}(\tau)} } } dx  = 0.
\end{equation}
Let $x \in D$.
By applying Markov property to the first exit time $\tau$ of $D$, the integrand in \eqref{eq:proof_prop_first_layer_c} is at most equal to
\begin{align*}
\sqrt{|\log \eps|} \eps^2 \EXPECT{x_0}{ e^{2 \sqrt{\frac{1}{\eps}L_{x,\eps}(\tau)}} \sup_{z \in \partial D} \EXPECT{z}{ \left. \sqrt{ \frac{1}{\eps} L_{x,\eps}(\tau_{x,R}) + \frac{1}{\eps} L_{x,\eps}(\tau) } - \sqrt{ \frac{1}{\eps} L_{x,\eps}(\tau) } \right\vert L_{x,\eps}(\tau) }  }.
\end{align*}
We decompose this expectation in two parts, the first one integrating on the event that $\sqrt{\frac{1}{\eps}L_{x,\eps}(\tau)} < |\log \eps|/2$ and the second one integrating on the complement event. The first part decays quickly to zero and we explain how to deal with the second part.
Recall that starting from any point of $\partial D(x,\eps)$, $L_{x,\eps}(\tau_{x,R})$ is a random variable with mean $2 \eps \log (R/\eps)$ (see Lemma \ref{lem:Green_function_estimate}). By Cauchy--Schwarz inequality and then by bounding $\sqrt{a+b} - \sqrt{a} \leq C b/\sqrt{a}$ for $a>2b>1$, and using \eqref{eq:lem_hitting_proba}, we thus obtain that on the event that $\sqrt{\frac{1}{\eps}L_{x,\eps}(\tau)} \geq |\log \eps|/2$,
\begin{align*}
& \sup_{z \in \partial D} \EXPECT{z}{ \left. \sqrt{ \frac{1}{\eps} L_{x,\eps}(\tau_{x,R}) + \frac{1}{\eps} L_{x,\eps}(\tau) } - \sqrt{ \frac{1}{\eps} L_{x,\eps}(\tau) } \right\vert L_{x,\eps}(\tau) } \\
& \leq \sup_{z \in \partial D} \PROB{z}{\tau_{x,\eps} < \tau_{x,R}} \left( \sqrt{ \EXPECT{z}{ \left. \frac{1}{\eps} L_{x,\eps}(\tau_{x,R}) \right\vert \tau_{x,\eps} < \tau} + \frac{1}{\eps} L_{x,\eps}(\tau) } - \sqrt{ \frac{1}{\eps} L_{x,\eps}(\tau) } \right) \\
& \lesssim \frac{|\log \d(x,\partial D)|}{|\log \eps|} \left( \sqrt{ \frac{1}{\eps} L_{x,\eps}(\tau) + 2 \log \frac{R}{\eps} } - \sqrt{ \frac{1}{\eps} L_{x,\eps}(\tau) } \right) \\
& \lesssim |\log \d(x,\partial D)| \left( \frac{1}{\eps}L_{x,\eps}(\tau) \right)^{-1/2}
\lesssim \frac{|\log \d(x,\partial D)|}{ |\log \eps| }.
\end{align*}
The integrand in \eqref{eq:proof_prop_first_layer_c} is therefore at most
\[
o(1) +
O(1) \frac{ |\log \d(x,\partial D)|}{\sqrt{ |\log \eps| }} \eps^2 \EXPECT{x_0}{ e^{2 \sqrt{\frac{1}{\eps} L_{x,\eps}(\tau) }} }
\leq o(1) + O(1) \frac{|\log \d(x,\partial D)| |\log |x-x_0|| }{|\log \eps| }
\]
by \eqref{eq:lem_first_moment} and \eqref{eq:lem_hitting_proba}.
This concludes the proof of \eqref{eq:proof_prop_first_layer_c}.

Now, let $\beta >0$. For any small $\rho >0$ and large $\beta'>0$, we have
\begin{align*}
& \limsup_{\eps \to 0} \PROB{x_0}{ \abs{ \mu_\eps(A) - \hat{\mu}_\eps(A) } > \rho }
\leq \PROB{x_0}{H^c} \\
& + \frac{3}{\rho} \limsup_{\eps \to 0} \int_D \sqrt{|\log \eps|} \eps^2 \EXPECT{x_0}{
\left( \sqrt{\frac{1}{\eps} L_{x,\eps}(\tau_{x,R})} - \sqrt{\frac{1}{\eps} L_{x,\eps}(\tau)} \right) e^{2 \sqrt{\frac{1}{\eps} L_{x,\eps}(\tau)} } } dx \\
& + \frac{3}{\rho} \limsup_{\eps \to 0} \int_D \left( 3 \log \log \frac{1}{\eps} + \beta \right) \sqrt{|\log \eps|} \eps^2 \EXPECT{x_0}{ e^{2 \sqrt{\frac{1}{\eps} L_{x,\eps}(\tau)} } \mathbf{1}_{H_\eps(x)} } dx \\
& + \frac{3}{\rho} \limsup_{\eps \to 0} \int_D \sqrt{|\log \eps|} \eps^2 \EXPECT{x_0}{
\abs{ - \sqrt{\frac{1}{\eps} L_{x,\eps}(\tau_{x,R})} + 2 \log \frac{1}{\eps} } e^{2 \sqrt{\frac{1}{\eps} L_{x,\eps}(\tau)} } \mathbf{1}_{H_\eps(x)} \mathbf{1}_{G_\eps(x)^c} } dx.
\end{align*}
\eqref{eq:proof_prop_first_layer_c} and \eqref{eq:proof_prop_first_layer_a} tell us that the second and respectively third right hand side terms vanish. When $\beta '>0$ and $\rho >0$ are fixed, one can show using a method very similar to what we did with the Seneta--Heyde normalisation that the last right hand side term goes to zero as $\beta \to \infty$. Hence
\[
\limsup_{\beta \to \infty} \limsup_{\eps \to 0} \PROB{x_0}{ \abs{ \mu_\eps(A) - \hat{\mu}_\eps(A) } > \rho }
\leq \PROB{x_0}{H^c}.
\]
The left hand side term is independent of $\beta'$ whereas the right hand side term goes to zero as $\beta' \to 0$. Therefore, for any small $\rho >0$,
\[
\limsup_{\beta \to \infty} \limsup_{\eps \to 0} \PROB{x_0}{ \abs{ \mu_\eps(A) - \hat{\mu}_\eps(A) } > \rho }
= 0
\]
as desired in \eqref{eq:lem_first_layer_good_event_b}. The proof of \eqref{eq:lem_first_moment_measures_2} is very similar to that of \eqref{eq:lem_first_moment_measures_1}. We omit the details and it concludes the proof.
\end{proof}

\subsection{Second layer of good events: proof of Lemma \ref{lem:bad_points}}

\begin{proof}[Proof of Lemma \ref{lem:bad_points}]
We start by proving \eqref{eq:lem_bad_points_2}. Let $\eta_0 >0$. By Lemma \ref{lem:first_moment_measures}, it is enough to show that
\begin{align}
& \int_D \sqrt{|\log \eps|} \eps^2 \EXPECT{x_0}{ \left( - \sqrt{\frac{1}{\eps} L_{x,\eps}(\tau_{x,R})} + 2 \log \frac{1}{\eps} + \beta \right) e^{2 \sqrt{\frac{1}{\eps} L_{x,\eps}(\tau)} } \mathbf{1}_{G_\eps(x)} \mathbf{1}_{G_\eps'(x)^c}  } \indic{|x-x_0| > \eta_0} dx
\label{eq:proof_lem_bad_points_a}
\end{align}
goes to zero as $\eps \to 0$ and then $M \to \infty$. The constants underlying the following estimates may depend on $\eta_0$. We start off by bounding $L_{x,\eps}(\tau)$ by $L_{x,\eps}(\tau_{x,R})$ in the exponential above.
By letting $t = k - k_x$, $\beta_x = \beta + 2k_x$ and $r = \sqrt{e^{k_x} L_{x,e^{-k_x}}(\tau_{x,R})}$ and by using \eqref{eq:local_times_and_Bessel_process}, we are left to estimate
\begin{align*}
\sqrt{t} e^{-2t} \EXPECT{x_0}{ \Eb_r^0 \left[ (- X_t + 2 t + \beta_x) e^{2X_t} \indic{\forall s \leq t, X_s < 2s + \beta_x, \exists s \leq t, X_s \geq 2s + \beta_x - \frac{\sqrt{s}}{M \log(2+s)^2} } \right] }.
\end{align*}
By \eqref{eq:lem_Bessel0_Bessel3_bound} and then by Cauchy--Schwarz inequality, this is at most
\begin{align*}
& \EXPECT{x_0}{ \sqrt{r} (\beta_x - r) e^{2r} \Eb^3_{\beta_x -r} \left[ \left( \frac{t}{2t + \beta_x - X_t} \right)_+^{1/2} \indic{\exists s \leq t, X_s \leq \frac{\sqrt{s}}{M \log(2+s)^2} } \right] } \\
& \lesssim \EXPECT{x_0}{ \sqrt{r} (\beta_x - r) e^{2r} \Eb^3_{\beta_x -r} \left[ \left( \frac{t}{2t + \beta_x - X_t} \right)_+ \right]^{1/2} \Pb^3_{\beta_x -r} \left( \exists s \geq 0, X_s \leq \frac{\sqrt{s}}{M \log(2+s)^2} \right)^{1/2} }
\end{align*}
which goes to zero as $M \to \infty$ uniformly in $t$ by Lemma \ref{lem:Bessel3_barrier}, Points \ref{lemlem:barrier} and \ref{lemlem:1/X_t^2bis}. We have thus proven that the contribution of points at distance at least $\eta_0$ from $x_0$ to the integral \eqref{eq:proof_lem_bad_points_a} goes to zero as $\eps \to 0$ and then $M \to 0$. This concludes the proof of \eqref{eq:lem_bad_points_2}.

The proof of \eqref{eq:lem_bad_points_1} is very similar: the presence of an extra $\sqrt{|\log \eps|}$ in the normalisation as well as the absence of the derivative term $(-X_t + 2t + \beta)$ makes an extra multiplicative term $\sqrt{t}/X_t$ popping up in the expectation with respect to the 3D Bessel process. We conclude as before using Cauchy--Schwarz inequality and Lemma \ref{lem:Bessel3_barrier}, Point \ref{lemlem:1/X_t^2}.

We finish with the proof of \eqref{eq:lem_bad_points_subcritical}. With the same notations as above, it is again enough to estimate
\[
(2-\gamma)^{-1}\sqrt{t} e^{-\frac{\gamma^2}{2}t} \EXPECT{x_0}{ \Eb_r^0 \left[ e^{\gamma X_t} \indic{\forall s \leq t, X_s < 2s + \beta_x, \exists s \leq t, X_s \geq 2s +\beta_x - \frac{\sqrt{s}}{M \log(2+s)^2} } \right] }.
\]
By \eqref{eq:lem_Bessel0_BM_bound}, this is at most
\begin{align}
\nonumber
& (2-\gamma)^{-1} \EXPECT{x_0}{ \sqrt{r} e^{\gamma r} \Eb_r \left[ \left( \frac{t}{X_t + \gamma t} \right)_+^{1/2} \indic{\forall s \leq t, X_s < (2-\gamma)s + \beta_x, \exists s \leq t, X_s \geq (2-\gamma)s + \beta_x - \frac{\sqrt{s}}{M \log(2+s)^2} }  \right] } \\
& \lesssim o_{t \to \infty} (1) + 
(2-\gamma)^{-1} \E_{x_0} \bigg[ \sqrt{r} e^{\gamma \sqrt{r}}  \nonumber \\
& ~~~~~~ \Pb_r \left( \forall s \leq t, X_s < (2-\gamma)s + \beta_x, \exists s \leq t, X_s \geq (2-\gamma)s + \beta_x - \frac{\sqrt{s}}{M \log(2+s)^2}  \right) \bigg]
\label{eq:proof_lem_bad_points_b}
\end{align}
where we obtained the above estimate by decomposing the expectation according to whether $X_t \leq -\gamma t/2$ or not.
By Girsanov's theorem and then by Lemma \ref{lem:bessel_RN_derivative}, the above probability with respect to the one-dimensional Brownian motion is equal to
\begin{align*}
& e^{-(2-\gamma)^2 t/2}
\Eb_0 \left[ e^{-(2-\gamma)X_t} \indic{\forall s \leq t, X_s < \beta_x -r, \exists s \leq t, X_s \geq \beta_x - r - \frac{\sqrt{s}}{M \log(2+s)^2}} \right] \\
& = e^{-(2-\gamma)^2 t/2}
\Eb_{\beta_x-r}^3 \left[ \frac{\beta_x -r}{X_t} e^{-(2-\gamma)(\beta_x -r - X_t)} \indic{\exists s \leq t, X_s \leq \frac{\sqrt{s}}{M \log(2+s)^2}} \right].
\end{align*}
By decomposing the above expectation according to whether $X_t \geq (2-\gamma) t / 4$ or not, we see that it is at most, up to a multiplicative constant,
\begin{align*}
& e^{-(2-\gamma)^2 t / 4} + e^{-(2-\gamma)^2 t/2} \Eb^3_{\beta_x-r} \left[ \frac{\beta_x -r}{(2-\gamma)t} e^{(2-\gamma) X_t} \indic{ \exists s \leq t, X_s \leq \frac{\sqrt{s}}{M \log(2+s)^2}} \right].
\end{align*}
Now, by Lemma \ref{lem:Bessel3_barrier} point \ref{lemlem:barrier} and because $X_t$ under $\Pb_{\beta_x-r}^3 \left( \cdot \left\vert \exists s \leq t, X_s \leq \frac{\sqrt{s}}{M \log(2+s)^2} \right. \right)$ is stochastically dominated by $X_t$ under $\Pb^3_{\beta_x-r}$, we see that the probability in \eqref{eq:proof_lem_bad_points_b} is at most, up to a multiplicative constant,
\begin{align*}
& e^{-(2-\gamma)^2 t / 4} + o_{M \to \infty}(1) e^{-(2-\gamma)^2 t/2} \Eb^3_{\beta_x-r} \left[ \frac{\beta_x -r}{(2-\gamma)t} e^{(2-\gamma) X_t} \right].
\end{align*}
By a similar procedure as above we can reintroduce $\frac{\beta_x-r}{X_t}$ in the expectation above in place of $\frac{\beta_x-r}{(2-\gamma) t}$ and reverse the computations using Lemma \ref{lem:bessel_RN_derivative} and then Girsanov's theorem to obtain that
\begin{align*}
& e^{-(2-\gamma)^2 t/2} \Eb^3_{\beta_x-r} \left[ \frac{\beta_x -r}{(2-\gamma)t} e^{(2-\gamma) X_t} \right] \\
& \lesssim e^{-(2-\gamma)^2 t/4} + \Pb_r \left( \forall s \leq t, X_s < (2-\gamma)s + \beta_x \right) \lesssim 2- \gamma
\end{align*}
by \eqref{eq:lem_BM_estimates_3}.
Wrapping things up, we have obtained that the probability in \eqref{eq:proof_lem_bad_points_b} is at most
\[
o_{M \to \infty}(1) (2-\gamma)
\]
as desired. This concludes the proof.
\end{proof}

\section{\texorpdfstring{$L^2$}{L2}-estimates}
\label{sec:L2}

\subsection{Uniform integrability: proof of Proposition \ref{prop:bdd_L2}}

This section is devoted to the proof of Proposition \ref{prop:bdd_L2}.
We first state the following result for ease of reference.

\begin{lemma}\label{lem:sum_squared_Bessel}
Let $I$ be a finite set of indices, $(r_i, i \in I) \in [0,\infty)^I$ and let $(X^{(i)}, i \in I) \sim \otimes_{i \in I} \Pb^0_{r_i}$ be independent zero-dimensional Bessel processes starting at $r_i$. Define the process $(X_s, s \geq 0)$ as follows: for all $n \geq 0$, let $X_n = \sqrt{ \sum_{i \in I} (X_n^{(i)})^2 }$ and conditionally on $(X_n^{(i)}, n \geq 1, i \in I)$, let $(X_s, s \in (n,n+1)), n \geq 0,$ be independent zero-dimensional Bessel bridges between $X_n$ and $X_{n+1}$. Then $X \sim \Pb^0_r$ with $r = \sqrt{\sum_{i \in I} r_i^2}$.
\end{lemma}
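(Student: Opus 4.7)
The plan is to identify the process $X$ constructed in the lemma with the process $\tilde X_t := \sqrt{\sum_{i \in I} (X^{(i)}_t)^2}$, $t \geq 0$, which I expect to be a zero-dimensional Bessel process starting at $r$ for essentially structural reasons. This identification rests on two classical ingredients.

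First, I would invoke the additivity property of squared Bessel processes, due to Shiga--Watanabe (see e.g.\ Revuz--Yor, Chapter~XI): if $(Y^{(i)})_{i \in I}$ are independent squared Bessel processes of dimensions $d_i \geq 0$ started from $y_i \geq 0$, then $\sum_i Y^{(i)}$ is a squared Bessel process of dimension $\sum_i d_i$ started from $\sum_i y_i$. Applied to $Y^{(i)} = (X^{(i)})^2$, all of dimension $d_i = 0$ and starting from $y_i = r_i^2$, this yields that $\tilde X^2$ is a zero-dimensional squared Bessel process started from $\sum_i r_i^2$, so that $\tilde X \sim \Pb^0_r$ with $r = \sqrt{\sum_i r_i^2}$.

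Second, I would invoke the Markov property for the zero-dimensional Bessel process: applied at each integer time, it gives that, conditionally on the sequence $(\tilde X_n, n \geq 0)$, the excursions $(\tilde X_s, s \in (n,n+1))$ for $n \geq 0$ are independent, with the $n$-th piece distributed as a zero-dimensional Bessel bridge from $\tilde X_n$ to $\tilde X_{n+1}$ (this is the defining property of the Bessel bridge).

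To conclude, it suffices to observe that by construction the process $X$ of the statement has the same joint law as $\tilde X$. Indeed, $X_n = \sqrt{\sum_i (X^{(i)}_n)^2} = \tilde X_n$ for every $n \geq 0$, and conditionally on these values the pieces of $X$ on each $(n,n+1)$ are, by definition, independent zero-dimensional Bessel bridges with endpoints $X_n$ and $X_{n+1}$ — exactly the conditional description obtained for $\tilde X$ in the previous step. Hence $X \sim \Pb^0_r$. The only mild subtlety is that the additivity result must be applied in the degenerate regime $d=0$, where squared Bessel processes are absorbed at the origin; this is nevertheless covered by the standard theory (pathwise uniqueness of the SDE $dY_t = 2\sqrt{Y_t}\,dB_t$ absorbed at $0$), so I do not anticipate a genuine obstacle.
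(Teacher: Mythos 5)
Your proposal is correct and takes essentially the same approach as the paper, which simply notes that the claim is a direct consequence of the additivity of zero-dimensional squared Bessel processes; you spell out the two ingredients (Shiga--Watanabe additivity and the Markov-bridge decomposition of $\tilde X$ at integer times) that the paper leaves implicit.
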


\begin{proof}
This is a direct consequence of the fact that the sum of independent zero-dimensional squared Bessel processes is again distributed as a zero-dimensional squared Bessel process.
\end{proof}

\begin{proof}[Proof of Proposition \ref{prop:bdd_L2}]
The constants underlying this proof may depend on $\beta$ and $M$. We start by proving \eqref{eq:prop_L2_b}. We will then see that very few arguments need to be modified to obtain \eqref{eq:prop_L2_a} and \eqref{eq:prop_L2_subcritical}.
Let $\eps'$ be the only real number in $\{e^{-n}, n \geq 1\}$ be such that
\begin{equation}
\label{eq:proof_prop_L2_b}
1 \leq \frac{\eps'}{e^4 M \eps \exp \left( (\log |\log \eps|)^6 \right)} < e.
\end{equation}
We are first going to control the contribution of points $x, y \in D$ at distance at least $1/M$ from $x_0$ such that $|x-y| \leq \eps'$. Let $x$ and $y$ be such points.
On $G_\eps'(y)$,
\[
\sqrt{\frac{1}{\eps}L_{y,\eps}(\tau)} \leq 2 \log \frac{1}{\eps} - \frac{\sqrt{|\log \eps|}}{M\log(2+|\log \eps|)^2} + \beta.
\]
We thus have
\begin{align*}
(\eps')^2 \EXPECT{x_0}{\dhat{\mu}_\eps(x) \dhat{\mu}_\eps(y) }
& \lesssim (\eps')^2  |\log \eps|^3 \exp \left( - \frac{2\sqrt{|\log \eps|}}{M\log(2+|\log \eps|)^2} \right) \EXPECT{x_0}{ e^{2 \sqrt{\frac{1}{\eps}L_{x,\eps}(\tau)} } } \\
& \lesssim \left( \frac{\eps'}{\eps} \right)^2 |\log \eps|^{7/2} \exp \left( - \frac{2\sqrt{|\log \eps|}}{M\log(2+|\log \eps|)^2} \right)
\end{align*}
using \eqref{eq:lem_first_moment} in the last inequality.
This shows that
\[
\int_{D \times D} \sup_{\eps >0} \EXPECT{x_0}{\dhat{\mu}_\eps(dx) \dhat{\mu}_\eps(dy) } \indic{|x-y| \leq \eps'} < \infty.
\]

We now focus on the remaining contribution.
Let $x,y \in D$ at distance at least $1/M$ from $x_0$ be such that $|x-y| \geq \eps'$. Without loss of generality, assume that the diameter of $D$ is at most 1 so that we can define $\alpha = e^{-k_\alpha}, \eta = e^{-k_\eta} \in \{ e^{-n} , n \geq 1 + \floor{\log M} \}$ to be the only real numbers satisfying
\begin{equation}
\label{eq:proof_prop_L2_d}
\frac{1}{e^2 M} \leq \frac{\alpha}{|x-y|} < \frac{1}{e M}
\quad \mathrm{and} \quad
\frac{1}{e^4 M} \leq
\frac{\eta}{|x-y| \exp \left( - (\log |\log |x-y||)^6 \right) } < \frac{1}{e^3 M}.
\end{equation}
Notice that $D(x,\alpha) \cap D(y,\alpha) = \varnothing$ (as soon as $M$ is at least $2/e$), that $\eta \geq \eps$ because $|x-y| \geq \eps'$, that $k_\eta \geq 1 + \log M \geq k_x$ and that $\eta < \alpha/e$.
Define
\[
G_{\eta,\eps}(x) := \left\{ \forall s \in [k_\eta,k], h_{x,e^{-s}} \leq 2 s + \beta \right\}.
\]
Importantly, the event $G_{\eta, \eps}(x)$ is contained in $G_\eps(x)$ and only cares about what happens inside the disc $D(x,\alpha/e)$.
We similarly define $G_{\eta,\eps}(y)$.
We can bound $\EXPECT{x_0}{\dhat{\mu}_\eps(x) \dhat{\mu}_\eps(y) }$ by
\begin{align}
\nonumber
& |\log \eps| \eps^4 \E_{x_0} \Bigg[ \left( - \sqrt{\frac{1}{\eps}L_{x,\eps}(\tau_{x,R})} + 2 \log \frac{1}{\eps} + \beta \right) \left( - \sqrt{\frac{1}{\eps}L_{y,\eps}(\tau_{y,R})} + 2 \log \frac{1}{\eps} + \beta \right) \\
\label{eq:proof_prop_L2_c}
&  e^{2 \sqrt{\frac{1}{\eps}L_{x,\eps}(\tau_{x,R})}} e^{2 \sqrt{\frac{1}{\eps}L_{y,\eps}(\tau_{y,R})}} \mathbf{1}_{G_{\eta, \eps}(x)} \mathbf{1}_{G_{\eta, \eps}(y)} \indic{ \sqrt{\frac{1}{\eta}L_{y,\eta}(\tau_{y,R})} \leq 2 \log \frac{1}{\eta} + \beta - \frac{\sqrt{|\log \eta|}}{M \log(2+|\log \eta|)^2} } \Bigg].
\end{align}
In broad terms, our strategy now is to condition on $L_{x,\eta}(\tau_{x,R})$ and $L_{y,\eta}(\tau_{y,R})$ and integrate everything else. Let $N_x$ be the number of excursions from $\partial D(x,\alpha/e)$ to $\partial D(x,\alpha)$ before hitting $\partial D(x,R)$. For $i=1 \dots N_x$ and $\delta \leq \alpha/e$, let $L_{x,\delta}^i$ be the local time of $\partial D(x,\delta)$ accumulated during the $i$-th excursion. We also write $r_{x,\eta}^i := \sqrt{\frac{1}{\eta} L_{x,\eta}^i}$ and $r_{x,\eta} := \sqrt{\frac{1}{\eta} L_{x,\eta}(\tau_{x,R})}$.
Let $I_x$ be the subset of $\{1, \dots, N_x\}$ corresponding to the above excursions that hit $\partial D(x,\eta)$.
Define similar notations with $x$ replaced by $y$ et let $\Fc_{x,y}$ be the sigma algebra generated by
$N_x, N_y, I_x, I_y$
and the successive initial and final positions of the above-mentioned excursions (around both $x$ and $y$).

Conditionally on the initial and final positions of the above excursions,
\[
\left( L_{x,\delta}^i, i =1 \dots N_x, \delta \leq \alpha/e \right)
\quad \mathrm{and} \quad
\left( L_{y,\delta}^i, i =1 \dots N_y, \delta \leq \alpha/e \right)
\]
are independent. Moreover, for all $i=1 \dots N_x$, conditioned on $\{ i \in I_x \}$, $\left( L_{x,e^{-n}}^i, n \geq k_\alpha + 1 \right)$ is close to be independent of the initial and final positions of the given excursion: this is the content of the continuity Lemma \ref{lem:independence local times and exit point}. The Bessel bridges that we use to interpolate the local times between dyadic radii smaller than $\alpha$ around $x$ and $y$ do not create any further dependence since $D(x,\alpha) \cap D(y,\alpha) = \varnothing$. Hence, recalling \eqref{eq:local_times_and_Bessel_process} and Lemma \ref{lem:sum_squared_Bessel}, we see that by paying a multiplicative price $\left( 1 + p \left( \frac{\eta}{\alpha} \right) \right)^{|I_x|+|I_y|}$ and conditionally on $\Fc_{x,y}$, we can approximate the joint law of $(h_{x,\eta e^{-s}}, s \geq 0)$ and $(h_{y,\eta e^{-s}}, s \geq 0)$ by $\Pb^0_{r_{x,\eta}} \otimes \Pb^0_{r_{y,\eta}}$.
Letting $t = \log \frac{\eta}{\eps} = k-k_\eta$ and $\beta' := \beta + 2k_\eta$, we deduce that
\begin{align*}
& |\log \eps| \eps^4 \E_{x_0} \Bigg[ \left( - \sqrt{\frac{1}{\eps}L_{x,\eps}(\tau_{x,R})} + 2 \log \frac{1}{\eps} + \beta \right) \left( - \sqrt{\frac{1}{\eps}L_{y,\eps}(\tau_{y,R})} + 2 \log \frac{1}{\eps} + \beta \right) \\
&  e^{2 \sqrt{\frac{1}{\eps}L_{x,\eps}(\tau_{x,R})}} e^{2 \sqrt{\frac{1}{\eps}L_{y,\eps}(\tau_{y,R})}} \mathbf{1}_{G_{\eta, \eps}(x)} \mathbf{1}_{G_{\eta, \eps}(y)} \indic{ \sqrt{\frac{1}{\eta}L_{y,\eta}(\tau_{y,R})} \leq 2 \log \frac{1}{\eta} + \beta - \frac{\sqrt{|\log \eta|}}{M \log(2+|\log \eta|)^2} } \Bigg\vert \Fc_{x,y} \Bigg] \\
& \leq 
\left( 1 + p \left( \frac{\eta}{\alpha} \right) \right)^{|I_x|+|I_y|}
|\log \eps| \eps^4 \E_{x_0} \Bigg[  \indic{ \sqrt{\frac{1}{\eta}L_{y,\eta}(\tau_{y,R})} \leq 2 \log \frac{1}{\eta} + \beta - \frac{\sqrt{|\log \eta|}}{M \log(2+|\log \eta|)^2} }
\\
& \times \Eb^0_{r_{x,\eta}}
\Bigg[ \left( - X_{t} + 2 t + \beta' \right) e^{2 X_{t}} \indic{ \forall s \leq t, X_s \leq 2s + \beta'} \Bigg] \\
& \times \Eb^0_{r_{y,\eta}}
\Bigg[ \left( - X_{t} + 2 t + \beta' \right) e^{2 X_{t}} \indic{ \forall s \leq t, X_s \leq 2s + \beta'} \Bigg]
\Bigg\vert \Fc_{x,y} \Bigg].
\end{align*}
Now, by \eqref{eq:lem_Bessel_first_moment_b},
\begin{align}
\label{eq:proof_prop_L2_f}
& \sqrt{|\log \eps|} \eps^2 \Eb^0_{r_{x,\eta}} \left[ \left( - X_{t} + 2t + \beta' \right) e^{2 X_{t}} \indic{ \forall s \leq t, X_s \leq 2s + \beta'} \right] \\
\nonumber
& \lesssim \frac{\sqrt{|\log \eps|} \eps^2}{\sqrt{t} e^{-2t}} |\log \eta|^{3/2} e^{2 r_{x,\eta}}
\lesssim |\log \eta|^{3/2} \eta^2 e^{2 \sqrt{\frac{1}{\eta}L_{x,\eta}(\tau_{x,R})}}.
\end{align}
We have a similar estimate for the expectation around the point $y$ and we further bound
\begin{align*}
& \sqrt{|\log \eps|} \eps^2 \Eb^0_{r_{y,\eta}} \left[ \left( - X_{t} + 2t + \beta' \right) e^{2 X_{t}} \indic{ \forall s \leq t, X_s < 2s + \beta'} \right] \\
& \times \indic{ \sqrt{\frac{1}{\eta}L_{y,\eta}(\tau_{x,R})} \leq 2 \log \frac{1}{\eta} + \beta - \frac{\sqrt{|\log \eta|}}{M \log(2+|\log \eta|)^2} } \\
& \lesssim |\log \eta|^{3/2} \eta^{-2} \exp \left( - \frac{2\sqrt{|\log \eta|}}{M \log(2+|\log \eta|)^2} \right).
\end{align*}
To wrap things up, we have proven that
\begin{align}
\label{eq:proof_prop_L2_e}
& \EXPECT{x_0}{\dhat{\mu}_\eps(x) \dhat{\mu}_\eps(y) \vert ~ |I_x| + |I_y| } \\
\nonumber
& \lesssim \left( 1 + p \left( \frac{\eta}{\alpha} \right) \right)^{|I_x| + |I_y|}
|\log \eta|^3 \exp \left( - \frac{2\sqrt{|\log \eta|}}{M \log(2+|\log \eta|)^2} \right) \EXPECT{x_0}{ \left. e^{2 \sqrt{\frac{1}{\eta}L_{x,\eta}(\tau_{x,R})}} \right\vert ~ |I_x| + |I_y| }.
\end{align}
By the continuity Lemma \ref{lem:independence local times and exit point} and recalling \eqref{eq:proof_prop_L2_d}, there exists $c_* >0$ such that
\[
\log \left( 1 + p \left( \frac{\eta}{\alpha} \right) \right)
\lesssim \exp \left( - c_* \left( \log \frac{\alpha}{\eta} \right)^{1/3} \right)
\lesssim \exp \left( - c_* \left( \log |\log |x-y|| \right)^2 \right).
\]
If we take $N$ to be equal to $\exp \left(  c_* \left( \log |\log |x-y|| \right)^2 /2 \right)$, we thus have
\[
\left( 1 + p \left( \frac{\eta}{\alpha} \right) \right)^N \lesssim 1
\]
and \eqref{eq:proof_prop_L2_e} together with \eqref{eq:lem_first_moment} yield
\begin{align*}
& \EXPECT{x_0}{\dhat{\mu}_\eps(x) \dhat{\mu}_\eps(y) \indic{|I_x| + |I_y| \leq N} } \\
& \lesssim |\log \eta|^3 \exp \left( - \frac{2\sqrt{|\log \eta|}}{M \log(2+|\log \eta|)^2} \right) \EXPECT{x_0}{ e^{2 \sqrt{\frac{1}{\eta}L_{x,\eta}(\tau_{x,R})}} } \\
& \lesssim |\log \eta|^{7/2} \eta^{-2} \exp \left( - \frac{2\sqrt{|\log \eta|}}{M \log(2+|\log \eta|)^2} \right) \\
& \lesssim |\log |x-y||^{7/2} |x-y|^{-2} \exp \left( 2 (\log |\log |x-y||)^6 \right) \exp \left( - \frac{2\sqrt{|\log |x-y||}}{M \log(2+|\log |x-y||)^2} \right) \\
& \lesssim |x-y|^{-2} \exp \left( - \frac{\sqrt{|\log |x-y||}}{M \log(2+|\log |x-y||)^2} \right).
\end{align*}
We now explain how to bound $\EXPECT{x_0}{\dhat{\mu}_\eps(x) \dhat{\mu}_\eps(y) \indic{|I_x| + |I_y| > N} }$. $|I_x|$ is smaller than the number of excursions from $\partial D(x,\alpha/e)$ to $\partial D(x,\eta)$ before hitting $\partial D(x,R)$ and the probability for a Brownian trajectory starting at $\partial D(x,\alpha/e)$ to hit $\partial D(x,\eta)$ before hitting $\partial D(x,R)$ is given by
\[
\frac{\log(\alpha/eR)}{\log(\eta/R)}.
\]
By strong Markov property, we then obtain that for all $M>0$,
\begin{align*}
\PROB{x_0}{|I_x| > M} \leq \left( \frac{\log(\alpha/eR)}{\log(\eta/R)} \right)^M
& \leq \exp \left( - c \frac{(\log |\log |x-y||)^6}{|\log |x-y||} M \right).
\end{align*}
Using \eqref{eq:proof_prop_L2_e}, Cauchy--Schwarz and \eqref{eq:lem_first_moment}, we deduce that
\begin{align}
\nonumber
& \EXPECT{x_0}{\dhat{\mu}_\eps(x) \dhat{\mu}_\eps(y) \indic{|I_x| + |I_y| > N} } \\
\nonumber
& \leq \sum_{p \geq \floor{\log_2 N}} \EXPECT{x_0}{\dhat{\mu}_\eps(x) \dhat{\mu}_\eps(y) \indic{2^p \leq |I_x| + |I_y| < 2^{p+1}} } \\
\nonumber
& \lesssim |\log \eta|^3 \exp \left( - \frac{2\sqrt{|\log \eta|}}{M \log(2+|\log \eta|)^2} \right) \EXPECT{x_0}{ e^{4 \sqrt{\frac{1}{\eta}L_{x,\eta}(\tau_{x,R})}} }^{1/2} \\
\nonumber
& \times \sum_{p \geq \floor{\log_2 N}} \left( 1 + p \left( \frac{\eta}{\alpha} \right) \right)^{2^{p+1}} \left( \PROB{x_0}{ |I_x| \geq 2^{p-1}} + \PROB{x_0}{ |I_y| \geq 2^{p-1}} \right)^{1/2} \\
\label{eq:proof_prop_bdd_L2}
& \lesssim |x-y|^{-4} \exp \left( - c \frac{(\log |\log |x-y||)^6}{|\log |x-y||} N \right) \\
\nonumber
& \leq |x-y|^{-4} \exp \left( - c \exp \left( \frac{c_*}{4} (\log |\log |x-y||)^2 \right) \right) \lesssim 1.
\end{align}
This concludes the proof of \eqref{eq:prop_L2_b}. 

Let $\dhat{\mu}$ be any subsequential limit of $(\dhat{\mu}_\eps, \eps >0)$. 
The claim about the non-atomicity of $\dhat{\mu}$ follows from the following energy estimate which is a consequence of what we did before:
\begin{align*}
& \EXPECT{x_0}{\int_{D \times D} \exp \left( |\log |x-y||^{1/3} \right) \dhat{\mu}(dx) \dhat{\mu}(dy) } \\
& \leq \limsup_{\eps \to 0} \EXPECT{x_0}{\int_{D \times D} \exp \left( |\log |x-y||^{1/3} \right) \dhat{\mu}_\eps(dx) \dhat{\mu}_\eps(dy) } < \infty.
\end{align*}

\medskip

For the proof of \eqref{eq:prop_L2_a}, resp. \eqref{eq:prop_L2_subcritical}, we proceed in the exact same way as before. The only difference is that, instead of \eqref{eq:proof_prop_L2_f}, we need to bound from above
\[
|\log \eps| \eps^2 \Eb^0_{r_{x,\eta}} \left[ e^{2X_t} \indic{\forall s \leq t, -X_s + 2s + \beta' >0} \right],
\]
resp.
\[
\frac{1}{2-\gamma} \sqrt{|\log \eps|} \eps^{\gamma^2/2} \Eb^0_{r_{x,\eta}} \left[e^{\gamma X_t} \indic{\forall s \leq t, -X_s + 2s + \beta' >0} \right].
\]
This is done in \eqref{eq:lem_Bessel_first_moment_a}, resp. \eqref{eq:lem_Bessel_first_moment_c}, and we conclude the proof of \eqref{eq:prop_L2_a}, resp. \eqref{eq:prop_L2_subcritical}, along the same lines as above.

\end{proof}

\subsection{Cauchy sequence in \texorpdfstring{$L^2$}{L2}: proof of Proposition \ref{prop:Cauchy_L2}}
\label{subsec:Cauchy}

This section is devoted to the proof of Proposition \ref{prop:Cauchy_L2}.

\begin{proof}[Proof of Proposition \ref{prop:Cauchy_L2}]
Let $A$ be a Borel set of $\R^2$. Let $\eta = e^{-k_\eta} \in \{e^{-n}, n \geq 1\}$ be small and consider
\begin{equation}
\label{eq:proof_prop_Cauchy_g}
(A \times A)_\eta := \left\{ (x,y) \in A \times A: \forall n \geq 1, D(x,\eta) \cap \partial D(y,e^{-n}) = D(y,\eta) \cap \partial D(x,e^{-n}) = \varnothing \right\}.
\end{equation}
If $(x,y) \in (A \times A)_\eta$, the two sequences of circles $(\partial D(x,e^{-n}), n \geq 1)$ and $(\partial D(y,e^{-n}), n \geq 1)$ will not interact between each other inside $D(x,\eta)$ and $D(y,\eta)$. We can write
\begin{align*}
\limsup_{\eps, \eps' \to 0} \EXPECT{x_0}{ \left( \dhat{\mu}_\eps(A) - \dhat{\mu}_{\eps'}(A) \right)^2 }
& \leq 2 \limsup_{\eps \to 0} \int_{(A \times A) \backslash (A \times A)_\eta} \EXPECT{x_0}{\dhat{\mu}_\eps(dx) \dhat{\mu}_\eps(dy)} \\
& + 2 \limsup_{\eps, {\eps'} \to 0} \int_{(A \times A)_\eta} \EXPECT{x_0}{\dhat{\mu}_\eps(x) \left( \dhat{\mu}_\eps(y) - \dhat{\mu}_{\eps'}(y) \right)} dx dy.
\end{align*}
Thanks to \eqref{eq:prop_L2_b} and because the Lebesgue measure of $(A \times A) \backslash (A \times A)_\eta$ goes to zero as $\eta \to 0$, we know that the first right hand side term goes to zero as $\eta \to 0$. We are going to show that for a fixed $\eta$ the second right hand side term vanishes. \eqref{eq:prop_L2_b} provides the upper bound required to apply dominating convergence theorem and we are left to show the pointwise convergence
\begin{equation}
\label{eq:proof_prop_Cauchy_a}
\limsup_{\eps,{\eps'} \to 0}
\EXPECT{x_0}{\dhat{\mu}_\eps(x) \left( \dhat{\mu}_\eps(y) - \dhat{\mu}_{\eps'}(y) \right)} = 0
\end{equation}
for a fixed $(x,y) \in (A \times A)_\eta$. Let $\eta' = e^{-k_{\eta'}} \in \{e^{-n}, n \geq 0\}$ be much smaller than $\eta$. Let $N_y$ (resp. $N_y'$) be the number of excursions from $\partial D(y, \eta/e)$ to $\partial D(y,\eta)$ before hitting $\partial D$ (resp. before hitting $\partial D(y,R)$). For $i=1 \dots N_y'$ and $\delta \leq \eta/e$, we will denote $L_{y,\delta}^i$ the local time of $\partial D(y,\delta)$ accumulated during the $i$-th such excursion. Denote by $I$ (resp. $I'$) the subset of $\{1, \dots, N_y\}$ (resp. $\{1, \dots, N_y'\}$) corresponding to the excursions that visited $\partial D(y,\eta')$. First of all, one can show that there exists $N \geq 1$ depending on $\eta$ such that
\[
\limsup_{\eps, \eps' \to 0} \EXPECT{x_0}{\dhat{\mu}_\eps(x) \dhat{\mu}_{\eps'}(y) \indic{N_y' > N} } \leq \eta.
\]
This is a direct consequence of the bound \eqref{eq:proof_prop_bdd_L2}. Let $\Fc_{x,y}$ be the sigma-algebra generated by $(L_{x,e^{-n}}(\tau), L_{x,e^{-n}}(\tau_{x,R}), n \geq 0)$, $(L_{y,e^{-n}}(\tau), L_{y,e^{-n}}(\tau_{y,R}), n = 0 \dots k_\eta -1)$, $N_y, N_y', I, I'$, $(L_{y, e^{-n}}^i, i \notin I', n = k_\eta \dots k_{\eta'})$ as well as the starting and exiting point of the excursions from $\partial D(y, \eta/e)$ to $\partial D(y,\eta)$ before hitting $\partial D(y,R)$. Denote $(e/\eta) (r_{y,\eta/e})^2$ (resp. $(e/\eta) (r_{y,\eta/e}')^2$) the local time $L_{y,\eta/e}(\tau)$ (resp. $L_{y,\eta/e}(\tau_{y,R}) - L_{y,\eta/e}(\tau)$), $t = \log (\eta/(e\eps))$, $\beta' = \beta - 2\log(\eta/e)$, $t_0 = \log (e/\eta)$, $t_1 = \log(e \eta'/\eta)$. With a reasoning similar as what we did in the proof of Proposition \ref{prop:bdd_L2}, Lemma \ref{lem:independence local times and exit point}, \eqref{eq:local_times_and_Bessel_process} and Lemma \ref{lem:sum_squared_Bessel} imply that $\EXPECT{x_0}{\dhat{\mu}_\eps(x) \dhat{\mu}_\eps(y) \indic{N_y' \leq N} }$ is equal to
\begin{align*}
& (1 \pm p(\eta'/\eta))^N \E_{x_0} \Bigg[ \sqrt{|\log \eps|} \eps^2 \left( - \sqrt{\frac{1}{\eps} L_{x,\eps}(\tau_{x,R}) } + 2 |\log \eps| + \beta \right) e^{2 \sqrt{\frac{1}{\eps} L_{x,\eps}(\tau)} } \mathbf{1}_{G_\eps(x) \cap G_\eps'(x)} \\
& \times \mathbf{1}_{G_{\eta/e}(y) \cap G_{\eta/e}'(y)} \indic{N_y' \leq N}
\Eb^0_{r_{y,\alpha/e}} \otimes \Eb^0_{r_{y,\alpha/e}'} \Bigg[ \sqrt{|\log \eps|} \eps^2 \left( - \sqrt{X_t^2 + (X_t')^2} + 2t + \beta' \right) e^{2X_t} \\
& \times f_t(X_s,X'_s,s \leq t)
\Bigg\vert \Fc_{x,y} \Bigg] \Bigg]
\end{align*}
where
\begin{align*}
f_t(X_s, X_s', s \leq t) & :=
\indic{\forall s \leq t_1 - t_0, \sqrt{ X_s^2 + (X_s')^2 + \sum_{i \notin I'} \frac{e}{\eta} e^s L_{y,\eta e^{-s-1}}^i} \leq 2s + \beta' - \frac{\sqrt{s+t_0}}{M \log (2+t_0+s)^2} } \\
& \times \indic{\forall s \in [t_1 - t_0, t], \sqrt{ X_s^2 + (X_s')^2} \leq 2s + \beta' - \frac{\sqrt{s+t_0}}{M \log (2+t_0+s)^2} }.
\end{align*}
Now, by \eqref{eq:lem_Bessel0_Bessel3_gamma=2}, we have
\begin{align*}
& \Eb^0_{r_{y,\alpha/e}} \otimes \Eb^0_{r_{y,\alpha/e}'} \Bigg[ \sqrt{|\log \eps|} \eps^2 \left( - \sqrt{X_t^2 + (X_t')^2} + 2t + \beta' \right) e^{2X_t} f_t(X_s,X'_s,s \leq t) \indic{X_t >0}
\Bigg\vert \Fc_{x,y} \Bigg] \\
& = \frac{\sqrt{r_{y,\alpha/e}}}{\sqrt{2}} e^{2 r_{y,\alpha/e}} \frac{\sqrt{|\log \eps|}}{\sqrt{|\log (e\eps/\eta)|}} \left( \frac{\eta}{e} \right)^2 (\beta' - r_{y,\eta/e})
\Eb^3_{\beta' - r_{y,\alpha/e}} \otimes \Eb^0_{r_{y,\alpha/e}'}
\Bigg[
\left( 1 - \frac{X_t-\beta'}{2t} \right)^{-1/2} \\
& \times \frac{- \sqrt{(2t- X_t + \beta')^2 + (X'_t)^2} + 2t + \beta'}{ X_t } \exp \left( - \frac{3}{8} \int_0^t \frac{ds}{(2s-X_s+\beta')^2} \right) \\
& \times f_t(2s - X_s + \beta', X_s', s \leq t)
\Bigg\vert \Fc_{x,y}
\Bigg]
\end{align*}
which converges as $\eps \to 0$ (and hence $t \to \infty$) towards
\begin{align*}
& \frac{\sqrt{r_{y,\alpha/e}}}{\sqrt{2}} e^{2 r_{y,\alpha/e}} \left( \frac{\eta}{e} \right)^2 (\beta' - r_{y,\eta/e})
\Eb^3_{\beta' - r_{y,\alpha/e}} \otimes \Eb^0_{r_{y,\alpha/e}'}
\Bigg[ \exp \left( - \frac{3}{8} \int_0^\infty \frac{ds}{(2s - X_s + \beta')^2} \right) \\
& \times f_\infty(2s - X_s + \beta', X_s', s \geq 0)
\Bigg\vert \Fc_{x,y}
\Bigg].
\end{align*}
This shows that
\[
\limsup_{\eps,\eps' \to 0} (1+p(\eta'/\eta))^{-N} \EXPECT{x_0}{\dhat{\mu}_\eps(x) \dhat{\mu}_\eps(y) \indic{N_y' \leq N} } - (1-p(\eta'/\eta))^{-N} \EXPECT{x_0}{\dhat{\mu}_\eps(x) \dhat{\mu}_{\eps'}(y) \indic{N_y' \leq N} }
\]
is at most zero.
The only quantity depending on $\eta'$ in the above expression is $p(\eta'/\eta)$ which goes to zero as $\eta' \to 0$. By letting $\eta' \to 0$, we thus obtain
\[
\limsup_{\eps,\eps' \to 0} \EXPECT{x_0}{\dhat{\mu}_\eps(x) \dhat{\mu}_\eps(y) \indic{N_y' \leq N} } - \EXPECT{x_0}{\dhat{\mu}_\eps(x) \dhat{\mu}_{\eps'}(y) \indic{N_y' \leq N} } \leq 0.
\]
This concludes the proof of the fact that $(\dhat{\mu}_\eps(A), \eps >0)$ is Cauchy in $L^2$.

\medskip

We move on to the proof of the convergence of $(\dhat{m}_\eps(A), \eps>0)$ together with the identification of the limit with $\sqrt{2/\pi} \dhat{\mu}(A)$. Since we already know that $(\dhat{\mu}_\eps(A), \eps>0)$ converges in $L^2$ towards $\dhat{\mu}(A)$, it is enough to show that
\[
\limsup_{\eps \to 0} \EXPECT{x_0}{ |\dhat{m}_\eps(A) - \sqrt{2/\pi} \dhat{\mu}_\eps(A) |^2 } = 0.
\]
In particular, we don't need to consider ``mixed moments'' with $\eps' \neq \eps$.
As before, we bound
\begin{align*}
& \limsup_{\eps \to 0} \EXPECT{x_0}{ |\dhat{m}_\eps(A) - \sqrt{2/\pi} \dhat{\mu}_\eps(A) |^2 } \\
& \leq \limsup_{\eps \to 0} \int_{(A \times A) \backslash (A \times A)_\eta} \EXPECT{x_0}{\dhat{m}_\eps(dx) \dhat{m}_\eps(dy)} + \frac{2}{\pi} \EXPECT{x_0}{\dhat{\mu}_\eps(dx) \dhat{\mu}_\eps(dy)} \\
& + \limsup_{\eps \to 0} \int_{(A \times A)_\eta} \EXPECT{x_0}{\dhat{m}_\eps(dx) \left( \dhat{m}_\eps(dy) - \sqrt{\frac{2}{\pi}} \dhat{\mu}_\eps(dy) \right)} \\
& + \sqrt{\frac{2}{\pi}} \limsup_{\eps \to 0} \int_{(A \times A)_\eta} \EXPECT{x_0}{\dhat{\mu}_\eps(dx) \left( \sqrt{\frac{2}{\pi}} \dhat{\mu}_\eps(dy) - \dhat{m}_\eps(dy) \right)}.
\end{align*}
As before, we only need to care about the two last right hand side terms and thanks to \eqref{eq:prop_L2_a} and \eqref{eq:prop_L2_b}, we only need to show the two following pointwise convergences:
\begin{equation}
\label{eq:proof_prop_Cauchy_c}
\lim_{\eps \to 0} \EXPECT{x_0}{\dhat{m}_\eps(dx) \left( \dhat{m}_\eps(dy) - \sqrt{\frac{2}{\pi}} \dhat{\mu}_\eps(dy) \right)}
= \lim_{\eps \to 0} \EXPECT{x_0}{\dhat{\mu}_\eps(dx) \left( \sqrt{\frac{2}{\pi}} \dhat{\mu}_\eps(dy) - \dhat{m}_\eps(dy) \right)}
= 0
\end{equation}
where $(x,y) \in (A \times A)_\eta$ is fixed. In both cases, we employ the same technique as before by decomposing the Brownian trajectory according to what happens close to the point $y$ and \eqref{eq:proof_prop_Cauchy_c} follows from the fact that
\[
\Eb^0_{r_{y,\alpha/e}} \otimes \Eb^0_{r_{y,\alpha/e}'} \Bigg[ |\log \eps| \eps^2 e^{2X_t} f_t(X_s,X'_s,s \leq t)
\Bigg\vert \Fc_{x,y} \Bigg]
\]
converges to the same limit as
\[
\sqrt{\frac{2}{\pi}} \Eb^0_{r_{y,\alpha/e}} \otimes \Eb^0_{r_{y,\alpha/e}'} \Bigg[ \sqrt{|\log \eps|} \eps^2 \left( - \sqrt{X_t^2 + (X_t')^2} + 2t + \beta' \right) e^{2X_t} f_t(X_s,X'_s,s \leq t)
\Bigg\vert \Fc_{x,y} \Bigg].
\]
Let us justify this last claim. After using \eqref{eq:lem_Bessel0_Bessel3_gamma=2}, we see that we only need to show that
\begin{align}
\label{eq:proof_prop_Cauchy_e}
& \lim_{t \to \infty}
\Eb^3_{\beta' - r_{y,\alpha/e}} \otimes \Eb^0_{r_{y,\alpha/e}'}
\Bigg[
\frac{\sqrt{t}}{X_t}
\left( 1 - \frac{X_t-\beta'}{2t} \right)_+^{-1/2} \\
\nonumber
& \times \exp \left( - \frac{3}{8} \int_0^t \frac{ds}{(2s-X_s+\beta')^2} \right) f_t(2s - X_s + \beta', X_s', s \leq t)
\Bigg\vert \Fc_{x,y}
\Bigg] \\
\nonumber
& = \sqrt{\frac{2}{\pi}} \lim_{t \to \infty}
\Eb^3_{\beta' - r_{y,\alpha/e}} \otimes \Eb^0_{r_{y,\alpha/e}'}
\Bigg[
\frac{- \sqrt{(2t- X_t + \beta')^2 + (X'_t)^2} + 2t + \beta'}{ X_t }
\left( 1 - \frac{X_t-\beta'}{2t} \right)_+^{-1/2} \\
\nonumber
& \times \exp \left( - \frac{3}{8} \int_0^t \frac{ds}{(2s-X_s+\beta')^2} \right) f_t(2s - X_s + \beta', X_s', s \leq t)
\Bigg\vert \Fc_{x,y}
\Bigg].
\end{align}
Take $t_2>t_1 -t_0$ large. We can bound
\begin{align*}
& \abs{ f_{t_2}(2s - X_s + \beta', X_s', s \leq t_2) - f_t(2s - X_s + \beta', X_s', s \leq t) } \\
& \leq \indic{\exists s \geq t_2, X_s < \frac{\sqrt{s+t_0}}{M\log(2+t_0+s)^2} \mathrm{~or~} X_s \geq 2s + \beta' } + \indic{ \exists s \geq t_2, X_s' >0 }.
\end{align*}
The difference between the expectation on the left hand side of \eqref{eq:proof_prop_Cauchy_e} and
\begin{align*}
& \Eb^3_{\beta' - r_{y,\alpha/e}} \otimes \Eb^0_{r_{y,\alpha/e}'}
\Bigg[
\frac{\sqrt{t}}{X_t}
\left( 1 - \frac{X_t-\beta'}{2t} \right)_+^{-1/2} \\
& \times
\exp \left( - \frac{3}{8} \int_0^t \frac{ds}{(2s-X_s+\beta')^2} \right) f_{t_2}(2s - X_s + \beta', X_s', s \leq t_2)
\Bigg\vert \Fc_{x,y}
\Bigg]
\end{align*}
is thus at most
\begin{align*}
& \Eb^3_{\beta' - r_{y,\alpha/e}} \otimes \Eb^0_{r_{y,\alpha/e}'}
\Bigg[
\frac{\sqrt{t}}{X_t}
\left( 1 - \frac{X_t-\beta'}{2t} \right)_+^{-1/2} \\
& ~~~~~~ \times \Bigg\{ \indic{\exists s \geq t_2, X_s < \frac{\sqrt{s+t_0}}{M\log(2+t_0+s)^2} \mathrm{~or~} X_s \geq 2s + \beta' } + \indic{ \exists s \geq t_2, X_s' >0 } \Bigg\} \Bigg].
\end{align*}
Let $q_1 \in (1,3), q_2 \in (1,2)$ and $q_3 > 1$ be such that $1/q_1 + 1/q_2 + 1/q_3 = 1$. By H\"{o}lder's inequality, we can bound the above expression by
\begin{align*}
& \Eb^3_{\beta' - r_{y,\alpha/e}} \left[ \frac{t^{q_1/2}}{X_t^{q_1}} \right]^{1/q_1} \Eb^3_{\beta' - r_{y,\alpha/e}} \left[ \left( 1 - \frac{X_t-\beta'}{2t} \right)_+^{-q_2/2} \right]^{1/q_2} \Bigg\{ \Pb^0_{r_{y,\alpha/e}'} \left( \exists s \geq t_2, X_s' >0 \right) \\
& + \Pb^3_{\beta' - r_{y,\alpha/e}} \left( \exists s \geq t_2, X_s < \frac{\sqrt{s+t_0}}{M\log(2+t_0+s)^2} \mathrm{~or~} X_s \geq 2s + \beta' \right) \Bigg\}^{1/q_3}.
\end{align*}
The first two expectations are bounded by a universal constant by Lemma \ref{lem:Bessel3_barrier} Points \ref{lemlem:1/X_t^2} and \ref{lemlem:1/X_t^2bis}. The last term containing the two probabilities goes to zero as $t_2 \to \infty$. Similarly, we can replace
\[
\int_0^t \frac{ds}{(2s - X_s + \beta')^2}
\quad \mathrm{by} \quad
\int_0^{t_2} \frac{ds}{(2s - X_s + \beta')^2}
\]
and
\[
\left( 1 - \frac{X_t-\beta'}{2t} \right)_+^{-1/2}
\quad \mathrm{by} \quad
1.
\]
We have shown that the left hand side term of \eqref{eq:proof_prop_Cauchy_e} is equal to $o_{t_2 \to \infty}(1)$ plus
\begin{align*}
\Eb^3_{\beta' - r_{y,\alpha/e}} \otimes \Eb^0_{r_{y,\alpha/e}'}
\Bigg[
\frac{\sqrt{t}}{X_t} 
\exp \left( - \frac{3}{8} \int_0^{t_2} \frac{ds}{(2s-X_s+\beta')^2} \right) f_{t_2}(2s - X_s + \beta', X_s', s \leq t_2)
\Bigg\vert \Fc_{x,y}
\Bigg].
\end{align*}
By conditioning up to $t_2$ and then by using Lemma \ref{lem:Bessel3_barrier} point \ref{lemlem:1/X_t}, we see that the above expectation converges as $t \to \infty$ to
\[
\sqrt{\frac{2}{\pi}} \Eb^3_{\beta' - r_{y,\alpha/e}} \otimes \Eb^0_{r_{y,\alpha/e}'}
\Bigg[
\exp \left( - \frac{3}{8} \int_0^{t_2} \frac{ds}{(2s-X_s+\beta')^2} \right) f_{t_2}(2s - X_s + \beta', X_s', s \leq t_2)
\Bigg\vert \Fc_{x,y}
\Bigg].
\]
With a similar reasoning as above, one can show that the expectation on the right hand side of \eqref{eq:proof_prop_Cauchy_e}
converges as $t \to \infty$ to $o_{t_2 \to \infty}(1)$ plus
\[
\Eb^3_{\beta' - r_{y,\alpha/e}} \otimes \Eb^0_{r_{y,\alpha/e}'}
\Bigg[
\exp \left( - \frac{3}{8} \int_0^{t_2} \frac{ds}{(2s-X_s+\beta')^2} \right) f_{t_2}(2s - X_s + \beta', X_s', s \leq t_2)
\Bigg\vert \Fc_{x,y}
\Bigg].
\]
We have thus shown the left and right hand sides of \eqref{eq:proof_prop_Cauchy_e} differ by at most some $o_{t_2 \to \infty}(1)$. Since they do not depend on $t_2$, we obtain the claim \eqref{eq:proof_prop_Cauchy_e} by letting $t_2 \to \infty$.
This concludes the fact that $(\dhat{m}_\eps(A), \eps >0)$ converges in $L^2$ towards $\sqrt{\frac{2}{\pi}}\dhat{\mu}(A)$.

\medskip

The fact that for all $\gamma \in (1,2)$, $(\dhat{m}_\eps^\gamma(A), \eps < \eps_\gamma)$ is a Cauchy sequence in $L^2$ follows along lines that are very similar to the proof of the fact that $(\dhat{\mu}_\eps(A), \eps >0)$ is a Cauchy sequence in $L^2$. For this reason we omit the details and we now turn to the proof of the convergence of $((2-\gamma)^{-1} \dhat{m}^\gamma(A), \gamma \in (1,2))$ towards $2 \dhat{\mu}(A)$. Here, we do not restrict ourselves to the sequence $(\gamma_n, n \geq 1)$ as stated in Proposition \ref{prop:Cauchy_L2} to ease notations. We hope the reader will forgive us for this lack of rigour. By Fatou's lemma,
\[
\limsup_{\gamma \to 2} \EXPECT{x_0}{ \abs{2 \dhat{\mu}(A) - \frac{1}{2-\gamma} \dhat{m}^\gamma(A) }^2 }
\leq \limsup_{\gamma \to 2} \limsup_{\eps \to 0} \EXPECT{x_0}{ \abs{2 \dhat{\mu}_\eps(A) - \frac{1}{2-\gamma} \dhat{m}_\eps^\gamma(A) }^2 }
\]
and we aim to show that the above right hand side term vanishes. As before and thanks to \eqref{eq:prop_L2_b} and \eqref{eq:prop_L2_subcritical}, we only need to show the following two pointwise convergences
\begin{align*}
\limsup_{\gamma \to 2} \limsup_{\eps \to 0} \frac{1}{2-\gamma}
\EXPECT{x_0}{\dhat{m}_\eps^\gamma(dx) \left( \frac{1}{2-\gamma} \dhat{m}_\eps^\gamma(dy) - 2 \dhat{\mu}_\eps(dy) \right)} = 0
\end{align*}
and
\begin{align*}
\limsup_{\gamma \to 2} \limsup_{\eps \to 0}
\EXPECT{x_0}{\dhat{\mu}_\eps(dx) \left( \frac{1}{2-\gamma} \dhat{m}_\eps^\gamma(dy) - 2 \dhat{\mu}_\eps(dy) \right)} = 0
\end{align*}
where $(x,y) \in (A \times A)_\eta$ is fixed. In both cases, this follows from the fact that
\begin{equation}
\label{eq:proof_prop_Cauchy_d}
\frac{1}{2-\gamma} \Eb^0_{r_{y,\alpha/e}} \otimes \Eb^0_{r_{y,\alpha/e}'} \Bigg[ \sqrt{|\log \eps|} \eps^{\gamma^2/2} e^{\gamma X_t} f_t(X_s,X'_s,s \leq t)
\Bigg\vert \Fc_{x,y} \Bigg]
\end{equation}
converges as $\eps \to 0$ and then $\gamma \to 2$ to the same limit as
\begin{equation}
\label{eq:proof_prop_Cauchy_f}
2 \Eb^0_{r_{y,\alpha/e}} \otimes \Eb^0_{r_{y,\alpha/e}'} \Bigg[ \sqrt{|\log \eps|} \eps^2 \left( - \sqrt{X_t^2 + (X_t')^2} + 2t + \beta' \right) e^{2X_t} f_t(X_s,X'_s,s \leq t)
\Bigg\vert \Fc_{x,y} \Bigg].
\end{equation}
Let us justify this claim.
By \eqref{eq:lem_Bessel0_BM}, \eqref{eq:proof_prop_Cauchy_d} is equal to
\begin{align*}
& \frac{1}{2-\gamma} 
\sqrt{r_{y,\alpha/e}} e^{\gamma r_{y,\alpha/e}} \frac{\sqrt{|\log \eps|}}{\sqrt{|\log (e\eps/\eta)|}} \left( \frac{\eta}{e} \right)^{\gamma^2/2} 
\Eb_{r_{y,\alpha/e}} \otimes \Eb^0_{r_{y,\alpha/e}'}
\Bigg[
\left( \frac{t}{X_t + \gamma t} \right)^{1/2} \\
& \times \exp \left( - \frac{3}{8} \int_0^t \frac{ds}{(X_s+\gamma s)^2} \right) f_t(X_s + \gamma s, X_s', s \leq t) \Bigg\vert \Fc_{x,y} \Bigg].
\end{align*}
As before, let $t_2 > t_1 - t_0$ be large. One can show in a similar manner as what we did above that
\begin{align*}
& \frac{1}{2-\gamma} \Eb_{r_{y,\alpha/e}} \otimes \Eb^0_{r_{y,\alpha/e}'}
\Bigg[
\left( \frac{t}{X_t + \gamma t} \right)^{1/2}
\exp \left( - \frac{3}{8} \int_0^t \frac{ds}{(X_s+\gamma s)^2} \right) f_t(X_s + \gamma s, X_s', s \leq t) \Bigg] \\
& = o_{t_2 \to \infty}(1) + \frac{1}{2-\gamma} \Eb_{r_{y,\alpha/e}} \otimes \Eb^0_{r_{y,\alpha/e}'}
\Bigg[
\left( \frac{t_2}{X_{t_2} + \gamma t_2} \right)^{1/2}
\exp \left( - \frac{3}{8} \int_0^{t_2} \frac{ds}{(X_s+\gamma s)^2} \right) \\
& \times f_{t_2}(X_s + \gamma s, X_s', s \leq t_2) \indic{\forall s \in [t_2,t], X_s < (2-\gamma)s + \beta'} \Bigg\vert \Fc_{x,y} \Bigg].
\end{align*}
Since (see \cite[Proposition 6.8.1]{Resnick1992} for instance)
\begin{align*}
& \lim_{\gamma \to 2} \lim_{t \to \infty} \frac{1}{2-\gamma} \Pb_{X_{t_2}} \left( \forall s \leq t, X_s < (2-\gamma)s + \beta' + (2-\gamma)t_2 \right) \\
& = \lim_{\gamma \to 2} \frac{1}{2-\gamma} \left( 1 - e^{-2(2-\gamma)(\beta' - X_{t_2})} \right)
= 2 (\beta' - X_{t_2}),
\end{align*}
this shows that the liminf and limsup of \eqref{eq:proof_prop_Cauchy_d} as $\eps \to 0$ and then $\gamma \to 2$ are equal to $o_{t_2 \to \infty}(1)$ plus
\begin{align*}
& 2
\sqrt{r_{y,\alpha/e}} e^{2 r_{y,\alpha/e}} \left( \frac{\eta}{e} \right)^2 
\Eb_{r_{y,\alpha/e}} \otimes \Eb^0_{r_{y,\alpha/e}'}
\Bigg[
\left( \frac{t_2}{X_{t_2} + \gamma t_2} \right)^{1/2} \\
& \times \exp \left( - \frac{3}{8} \int_0^{t_2} \frac{ds}{(X_s+2s)^2} \right) (\beta' -X_{t_2}) f_{t_2}(X_s + 2 s, X_s', s \leq t_2) \Bigg\vert \Fc_{x,y} \Bigg].
\end{align*}
By using \eqref{eq:lem_Bessel0_BM} in the other direction, we see that the above term converges as $t_2 \to \infty$ towards
\begin{align*}
& 2 (\eta/e)^2 \lim_{t_2 \to \infty} 
\Eb^0_{r_{y,\alpha/e}} \otimes \Eb^0_{r_{y,\alpha/e}'} \Bigg[ \sqrt{t_2} e^{-t_2} \left( - X_{t_2} + 2t_2 + \beta' \right) e^{2X_{t_2}} f_{t_2}(X_s,X'_s,s \leq t_2)
\Bigg\vert \Fc_{x,y} \Bigg] \\
& = 2 \lim_{\eps \to 0} \Eb^0_{r_{y,\alpha/e}} \otimes \Eb^0_{r_{y,\alpha/e}'} \Bigg[ \sqrt{|\log \eps|} \eps^2 \left( - \sqrt{X_t^2 + (X_t')^2} + 2t + \beta' \right) e^{2X_t} f_t(X_s,X'_s,s \leq t)
\Bigg\vert \Fc_{x,y} \Bigg]
\end{align*}
recalling that $t = \log(\frac{\eta}{e \eps})$ and since $X'$ will be trapped by zero. We have shown that \eqref{eq:proof_prop_Cauchy_d} converges as $\eps \to 0$ and then $\gamma \to 2$ to the same limit as \eqref{eq:proof_prop_Cauchy_f} as wanted. This concludes the proof of the fact that $(2-\gamma)^{-1} \dhat{m}_\eps^\gamma(A)$ converges in $L^2$ as $\eps \to 0$ and then $\gamma \to 2$ towards $2 \dhat{\mu}(A)$.
\end{proof}

\appendix

\section{Process of Bessel bridges: proof of Lemma \ref{lem:process_bessel_bridge}}\label{sec:appendix_bessel_bridge}

We prove Lemma \ref{lem:process_h} for completeness. It is a direct consequence of the following:

\begin{lemma}\label{lem:process_bessel_bridge}
For all $\delta \in \{e^{-n}, n \geq 0 \}$, let $x \in D \mapsto f_{x,\delta} \in [0,\infty)$ be continuous functions.
By enlarging the probability space we are working on if necessary, we can construct a random field $(h_{x,\delta}, x \in D, \delta \in (0,1])$ that is independent of $(B_t, t \leq \tau)$ and such that
\begin{itemize}
\item
for all $x \in D$, and $n \geq 0$, $(h_{x,e^{-t}}, t \in [n,n+1])$ has the law of a zero-dimensional Bessel bridge from $f_{x,e^{-n}}$ to $f_{x,e^{-n-1}}$;
\item
for all $\delta_0 \in (0,1]$ and $x, y \in D$, $(h_{x,\delta}, \delta \leq \delta_0)$ and $(h_{y,\delta}, \delta \leq \delta_0)$ are independent as soon as $|x-y| \geq 2 \delta_0$;
\item
For all $n \geq 0$, $(h_{x,e^{-t}}, x \in D, t \in [n,n+1])$ and $(h_{x,e^{-t}}, x \in D, t \notin [n,n+1])$ are independent;
\item
for all $n \geq 0$ and $z \in e^{-n-10} \Z^2 \cap D$, $(h_{x,\delta}, x \in D, \floor{e^{n+10} x} = e^{n+10}z, e^{-n-1} \leq \delta \leq e^{-n} )$ is continuous.
\end{itemize}
\end{lemma}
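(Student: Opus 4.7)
The third bullet tells us that the restrictions of $h$ to distinct log-dyadic strips $\{\delta = e^{-t}: t \in [n, n+1]\}$ are independent of each other, so it suffices to construct such a restriction for each $n \geq 0$ using fresh randomness and concatenate. To this end I first enlarge the probability space so that it carries, independently of $(B_s, s \leq \tau)$, a doubly-indexed family $(W_{n,i})_{n \geq 0,\ i \in \Z^2}$ of i.i.d.\ standard Brownian motions on $[0,1]$.

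Fix $n \geq 0$ and partition $\R^2$ into the grid $\{Q_{n,i}\}_{i \in \Z^2}$ of half-open squares of side $\ell_n := e^{-n-1}$. The key geometric fact is that each $Q_{n,i}$ has diameter $\ell_n \sqrt{2}$, so whenever $\delta_0 \in (0,1]$ satisfies $e^{-n-1} \leq \delta_0$ one has $\ell_n \sqrt{2} \leq \delta_0 \sqrt{2} < 2\delta_0$; in particular any two points $x, y$ with $|x-y| \geq 2\delta_0$ lie in distinct squares of the $n$-th grid. I shall then define $h$ on strip $n$ so that, for $x \in Q_{n,i}$, it is a deterministic measurable function of the endpoint data $(f_{x, e^{-n}}, f_{x, e^{-n-1}})$ and of $W_{n,i}$ only.

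The crucial technical ingredient is a jointly measurable map $\Phi : [0,\infty)^2 \times \Cc([0,1],\R) \to \Cc([0,1], [0,\infty))$ such that, for each pair $(a, b)$ and each standard Brownian motion $W$, $\Phi(a, b, W)$ has the law of a zero-dimensional Bessel bridge from $a$ to $b$ of duration $1$. Such a $\Phi$ can be obtained either by the pathwise strong solution to the $0$-dimensional Bessel bridge SDE driven by $W$ (killed at the first hit of $0$ and continued by $0$), or more abstractly by composing Wiener measure with a jointly Borel-measurable sampling map for the parameterized family of laws $(a, b) \mapsto \mathrm{Law}_{0\text{-}\mathrm{BB}}(a, b)$ on the Polish space $\Cc([0,1],[0,\infty))$. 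Given $\Phi$, set, for $x \in Q_{n,i}$,
\[
h_{x, e^{-t}} := \Phi\bigl(f_{x, e^{-n}},\ f_{x, e^{-n-1}},\ W_{n, i}\bigr)_{t - n}, \qquad t \in [n, n+1].
\]

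The three properties then fall out easily. The marginal bridge law at each $x$ holds by construction of $\Phi$. Independence of $h$ from $(B_s, s \leq \tau)$ comes from the enlargement. Distinct strips are built from disjoint i.i.d.\ families $(W_{n,\cdot})_{i}$, which yields the third bullet. Finally, if $|x - y| \geq 2 \delta_0$, then by the geometric observation above, for every $n$ with $e^{-n-1} \leq \delta_0$ the points $x$ and $y$ lie in distinct squares of the $n$-th grid, so the Brownian motions driving their bridges on strip $n$ are different and hence independent; combined with the independence across strips, this gives the independence of $(h_{x,\delta},\ \delta \leq \delta_0)$ and $(h_{y,\delta},\ \delta \leq \delta_0)$. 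The only non-routine point in the whole argument is producing the measurable parameterized family $\Phi$: the $0$-dimensional Bessel SDE is singular at the origin, so some care is needed, but this is a standard construction and presents the main (mild) obstacle.
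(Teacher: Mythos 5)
Your overall strategy --- discretize space at each scale, attach fresh independent noise to each grid cell, and push the noise through a measurable map producing Bessel bridges with prescribed endpoints --- is the same as the paper's, and your geometric verification of the second bullet is correct (for every strip $n$ contributing to $\{\delta \le \delta_0\}$ one has $e^{-n-1} \le \delta_0$, so the cell diameter $e^{-n-1}\sqrt{2} < 2\delta_0$). The third bullet follows from using disjoint noise families across strips, as you say. The paper's proof differs only cosmetically on the spatial side: it takes the grid $\tfrac{1}{10}e^{-n}\Z^2$ and assigns $x$ to its nearest grid point $x_n$ rather than to the square containing it.

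The substantive difference, and the place where your proposal has a gap, is the construction of the measurable map $\Phi$. The paper sidesteps both of your proposed routes by invoking Pitman's additive decomposition (Theorem (5.8) of \cite{Pitman82}): a zero-dimensional Bessel bridge from $u$ to $v$ is realized as
\[
u\, b^{1\to 0, d=0}_t \;+\; v\, b^{0\to 1, d=0}_t \;+\; \sum_{n\ge 1}\alpha_{\sqrt{uv},n}\, b^{0\to 0, d=4n}_t,
\]
a deterministic combination of a fixed collection of independent Bessel bridges together with a random selector $(\alpha_{w,n})$ driven by a single uniform variable. Joint measurability in $(u,v)$ is then manifest, including at the boundary $u=0$ or $v=0$. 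By contrast, your SDE route genuinely fails for $u=0$, $v>0$: zero is absorbing for forward zero-dimensional Bessel dynamics, so a forward bridge SDE cannot be started from $0$ and driven by $W$ --- the paper flags exactly this issue by defining $b^{0\to 1, d=0}$ as the time reversal of the bridge from $1$ to $0$. Your abstract measurable-selection route is sound in principle, but it requires first verifying Borel measurability of $(a,b)\mapsto\mathrm{Law}_{0\text{-BB}}(a,b)$ on $\Cc([0,1],[0,\infty))$, which you have not supplied; Pitman's decomposition is a clean, self-contained way to avoid that verification and is what you should use (or cite) to close the gap.
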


\begin{proof}[Proof of Lemma \ref{lem:process_bessel_bridge}]
We start by explaining how to construct a continuous process $(b^{u,v}_t, u,v \geq 0, 0\leq t \leq 1)$ such that for all $u,v \geq 0$, $(b^{u,v}_t, 0 \leq t \leq 1)$ has the law of a zero-dimensional Bessel bridge from $u$ to $v$.
Let $(b^{1 \to 0, d=0}_t, 0 \leq t \leq 1)$, $(b^{0 \to 1, d=0}_t, 0 \leq t \leq 1)$ and $(b^{0 \to 0, d=4n}_t, 0 \leq t \leq 1)$, $n \geq 1$, be independent Bessel bridges with starting and ending points and dimensions written in superscript. Since 0 is a trap for zero-dimensional Bessel process, $(b^{0 \to 1, d=0}_t, 0 \leq t \leq 1)$ is defined as the time reversal of a zero-dimensional Bessel bridge from 1 to 0. For $w \geq 0$, let $(\alpha_{w,n}, n \geq 1)$ be a sequence of random variables such that for all $n \geq 1$,
\[
\Prob{ \alpha_{w,n} = 1, \forall k \neq n, \alpha_{w,k} = 0 } = \frac{1}{n!} (w/2)^{2n-1} \Gamma(n) I_1(w)
\]
and
\[
\Prob{ \forall k \geq 1, \alpha_{w,k} = 0 } = 1 - \sum_{n \geq 1} \Prob{ \alpha_{w,n} = 1, \forall k \neq n, \alpha_{w,k} = 0 }.
\]
Here $I_1$ is a modified Bessel function of the first kind and $\Gamma$ is the Gamma function.
By using a single uniform random variable on $[0,1]$, it is easy to build all the variables $\alpha_{w,n}, w \geq 0, n \geq 1$ on the same probability space such that they are independent from the Bessel bridges above and such that for all $n \geq 1$, $w \mapsto \alpha_{w,n}$ is continuous. We now define for all $u, v \geq 0$, and $t \in [0,1]$,
\[
b^{u,v}_t = u b^{1 \to 0, d=0}_t + v b^{0 \to 1, d=0}_t + \sum_{n \geq 1} \alpha_{\sqrt{uv}, n} b^{0 \to 0, d=4n}_t.
\]
By construction, $(b^{u,v}_t, u,v \geq 0, 0\leq t \leq 1)$ is a continuous process. Moreover,  by \cite[Theorem (5.8)]{Pitman82}, for all $u, v \geq 0$, $b^{u,v}$ has the law of a zero-dimensional Bessel bridge from $u$ to $v$ over the time interval $[0,1]$ as desired.

We now explain how to construct the process $(h_{x,\delta}, x \in D, \delta \in (0,1])$. For $n \geq 0$ and $x \in D$, define $x_n := e^{-n-10} \floor{ e^{n+10} x } \in e^{-n-10} \Z^2$.
For all $n \geq 0$ and $z \in e^{-n-10} \Z^2 \cap D$, consider independent continuous processes $(h^{n,z}_{x,\delta}, x \in D, x_n = z, e^{-n-1} \leq \delta \leq e^{-n} )$ such that for all $x \in D$ with $x_n = z$, $(h^{n,z}_{x,e^{-t}}, n \leq t \leq n+1 )$ has the law of a zero-dimensional Bessel bridge from $f_{x,e^{-n}}$ to $f_{x,e^{-n-1}}$. This countable collection of independent continuous processes can be constructed thanks to the first step above. We now define for all $x \in D$ and $\delta \in (0,1]$, $h_{x,\delta} = h^{n,x_n}_{x,\delta}$ where $n \geq 0$ is such that $e^{-n-1} < \delta \leq e^{-n}$. By construction, the process $h$ satisfies the desired properties.
\end{proof}

\section{Semi-continuity of subcritical measures: proof of Proposition \ref{prop:process_subcritical}}
\label{sec:app_continuity}

In this section we explain how we obtain Proposition \ref{prop:process_subcritical}. We will only sketch the proof since it follows from \cite{jegoGMC} as well as from arguments having similar flavour as what we already did in this paper.

\begin{proof}
We will first truncate the measure to make it bounded in $L^2$. We will then show that the truncated version is continuous in $\gamma$ by Kolmogorov's continuity theorem and by $L^2$ computations. The statement on the non-truncated measures will then follow.

Let $0 < \gamma_- < \gamma_+ < 2$. We are going to study the regularity of $\gamma \in [ \gamma_-,\gamma_+] \mapsto m^\gamma$. Recall Notation \ref{not:k_x} and the definition of the process $(h_{x,\delta}, x \in D, \delta \in (0,1])$. Fix $\bar{\gamma} \in (\gamma_+,2)$ very close to $\gamma_+$. For $\beta >0$ large, define for all $\eps = e^{-k}$ and $x \in D$ at distance at least $\eps$ from $x_0$, the good event
\[
G_\eps(x) := \left\{ \forall s \in [k_x,k], h_{x,e^{-s}} \leq \bar{\gamma} s + \beta \right\}
\]
and the modified measures
\[
\bar{m}^\gamma_\eps(dx, \beta) = \mathbf{1}_{G_\eps(x)} m^\gamma_\eps(dx).
\]
Since $\bar{\gamma} > \gamma_+$, one can show that this modification does affect the measures in the $L^1$ sense:
\begin{equation}
\label{eq:app_continuity_b}
\lim_{\beta \to \infty} \sup_{\gamma \in [\gamma_-,\gamma_+]} \limsup_{\eps \to 0} \EXPECT{x_0}{ m^\gamma_\eps(D) - \bar{m}^\gamma_\eps(D, \beta) } = 0.
\end{equation}
Moreover, if $\bar{\gamma}$ is close enough to $\gamma_+$, the modified measures are bounded in $L^2$ (consequence of \cite[Proposition 4.2]{jegoGMC}) and we can show with a reasoning similar to what we did in Section \ref{subsec:Cauchy} (this does not follow completely from \cite{jegoGMC} since the good events that we define here are slightly different from the ones considered in \cite{jegoGMC}) that for all Borel set $A$ and all $\gamma \in [\gamma_-, \gamma_+]$, $(\bar{m}^\gamma_\eps(A, \beta), \eps >0)$ is a Cauchy sequence in $L^2$. We will denote $\bar{m}^\gamma(A, \beta)$ the limiting random variable.
We can further show that for all Borel set $A$ and for all $\gamma_1, \gamma_2 \in [\gamma_-,\gamma_+]$,
\begin{equation}
\label{eq:app_continuity}
\limsup_{\eps \to 0} \EXPECT{x_0}{ \left( \bar{m}^{\gamma_1}_\eps(A, \beta) - \bar{m}^{\gamma_2}_\eps(A, \beta) \right)^2 } \leq C (\gamma_1 - \gamma_2)^2
\end{equation}
for some $C>0$ possibly depending on $\beta, \gamma_-, \gamma_+, \bar{\gamma}$.
This follows on the one hand from a reasoning similar to what we have already done to transfer computations from local times to zero-dimensional Bessel process, and on the other hand from the following estimate which is a consequence of \eqref{eq:lem_Bessel0_BM}: for all $K>0$, there exists $C>0$ why may depend on $K, \beta, \gamma_-, \gamma_+, \bar{\gamma}$ such that
\[
\limsup_{t \to \infty} \sup_{\gamma_1, \gamma_2 \in [\gamma_-,\gamma_+]} \sup_{r \in [0,K]} \sqrt{t} \abs{ \Eb^0_r \left[ \left( e^{-\frac{\gamma_1^2}{2} t} e^{\gamma_1 X_t} - e^{-\frac{\gamma_2^2}{2} t} e^{\gamma_2 X_t} \right) \indic{\forall s \leq t, X_s \leq \bar{\gamma} s + \beta} \right] } \leq C (\gamma_1 - \gamma_2).
\]
Let $\mathcal{P} := \{ [a,b) \times [c,d): a,b,c,d \in \Q \}$. $\mathcal{P}$ is a countable pi-system generating the Borel sigma-algebra on $\R^2$.
From \eqref{eq:app_continuity} and Kolmogorov's continuity theorem, we deduce that we can build the variables $\bar{m}^\gamma(A,\beta)$ simultaneously for all $\gamma \in [\gamma_-, \gamma_+]$, $\beta \in \N$ and $A \in \mathcal{P}$ in such a way that for all $\beta \in \N$ and $A \in \mathcal{P}$, $\gamma \in [\gamma_-, \gamma_+] \mapsto \bar{m}^\gamma(A,\beta)$ is continuous. Let $\bar{m}^\gamma(A,\infty)$ be the nondecreasing limit of $(\bar{m}^\gamma(A,\beta), \beta \geq 1)$. A nondecreasing sequence of continuous functions being lower-semicontinous, we have shown that we can build on the same probability space the variables $\bar{m}^\gamma(A,\infty)$, $\gamma \in [\gamma_-, \gamma_+], A \in \mathcal{P}$ such that for all $A \in \mathcal{P}$, $\gamma \in [\gamma_-, \gamma_+] \mapsto \bar{m}^\gamma(A,\infty)$ is lower-semicontinuous. For all $\gamma \in [\gamma_-, \gamma_+]$, $\bar{m}^\gamma$ defines a Borel measure. By \eqref{eq:app_continuity_b}, for all $\gamma \in [\gamma_-, \gamma_+], A \in \mathcal{P}$, $m^\gamma(A) = \bar{m}^\gamma(A,\infty)$ $\prob_{x_0}$-a.s. Concluding the proof of Proposition \ref{prop:process_subcritical} is now routine.
\end{proof}

\paragraph*{Acknowledgement}

I am grateful to Nathana\"{e}l Berestycki for many inspiring discussions and to the referees for their careful readings of the paper.

\bibliographystyle{alpha}
\bibliography{../../bibliography}

\end{document}